\newcommand{\co}{\mathfrak{o}}
\newcommand{\crr}{\mathfrak{r}}
\renewcommand{\P}{\mathbb P}
\newcommand{\Z}{\mathbb Z}
\newcommand{\Q}{\mathbb Q}
\newcommand{\R}{\mathbb R}
\newcommand{\C}{\mathbb C}
\newcommand{\N}{\mathbb N}
\newcommand{\cG}{\mathcal G}
\newcommand{\cM}{\mathcal M}
\newcommand{\cO}{\mathcal O}
\newcommand{\cV}{\mathcal V}
\renewcommand{\t}{\widetilde}
\newcommand{\tX}{\widetilde X}
\newcommand{\cha}{\check \alpha}
\newcommand{\m}{\mathfrak m}
\DeclareMathOperator{\Hom}{Hom}
\DeclareMathOperator{\ord}{ord}
\DeclareMathOperator{\lcm}{lcm}
\DeclareMathOperator{\Sf}{Sf}
\renewcommand{\:}{\colon}
\newcommand{\ub}[1]{\underline{#1}}
\newcommand{\bk}{\ub k}
\newcommand{\ba}{\ub {\alpha}}
\newcommand{\bo}{\ub {\omega}}
\newcommand{\PI}{P_{\overline{I}_X}}
\newcommand{\fl}[1]{\left\lfloor #1 \right\rfloor}
\newcommand{\ce}[1]{\left\lceil #1 \right\rceil}
\newcommand{\gi}[1]{\langle #1 \rangle}
\newcommand{\defset}[2]{{\left\{#1\,\left| \,#2 \right. \right\}}}
\newcommand{\X}{(X,o)}
\newcommand{\ten}{\circle*{0.3}}
\numberwithin{equation}{section}
\numberwithin{equation}{subsection}
\theoremstyle{plain}
\newtheorem{theorem}[equation]{Theorem}
\newtheorem{lemma}[equation]{Lemma}
\newtheorem{proposition}[equation]{Proposition}
\newtheorem{corollary}[equation]{Corollary}
\newtheorem{thm}[equation]{Theorem}
\newtheorem{cor}[equation]{Corollary}
\newtheorem{lem}[equation]{Lemma}
\newtheorem{prop}[equation]{Proposition}
\theoremstyle{definition}
\newtheorem{example}[equation]{Example}
\newtheorem{remark}[equation]{Remark}
\newtheorem{definition}[equation]{Definition}
\newtheorem{defn}[equation]{Definition}
\newtheorem{ex}[equation]{Example}
\newtheorem{rem}[equation]{Remark}
\numberwithin{equation}{section}
\numberwithin{equation}{subsection}
\begin{document}
\title[The embedding dimension]{The embedding dimension of weighted homogeneous
surface singularities}

\author{Andr\'as N\'emethi}
\address{R\'enyi Institute of Mathematics, Budapest, Hungary.}
\email{nemethi@renyi.hu}

\author{Tomohiro Okuma}
\address{Department of Education, Yamagata University,
 Yamagata 990-8560, Japan.}
\email{okuma@e.yamagata-u.ac.jp}

\subjclass[2000]{Primary 14B05, 32S25;
Secondary 14J17}

\keywords{Surface singularities, weighted homogeneous singularities, Poincar\'e series, embedding dimension,
Seifert invariants, rational singularities, automorphic forms}

\thanks{The first author is partially supported by
Marie Curie grant and OTKA
grants; the second author by KAKENHI 20540060. }

\begin{abstract}
We analyze the embedding dimension  of a normal weighted homogeneous surface singularity, and
more generally, the Poincar\'e series of the minimal set of generators of the graded algebra of regular functions,
provided that the link of the germs is a rational homology sphere.
In the case of several sub-families we provide explicit formulas in terms of the Seifert invariants
(generalizing results of Wagreich and VanDyke), and we also provide key examples showing that, in general,
these invariants are not topological. We extend the discussion to the case of splice--quotient singularities
with star--shaped graph as well.
\end{abstract}

\maketitle

\section{Introduction}

Let $(X,o)$ be a normal weighted homogeneous surface singularity defined over the
complex number field $\C$. Then $(X,o)$ is the germ at the origin of an affine variety
$X$ with a good $\C^*$--action. Our goal is to determine the minimal set of generators
of the graded algebra $G_X=\sum_{l\geq 0}(G_X)_l$ of regular functions on $X$.
This numerically is codified  in the Poincar\'e series
$$P_{\m_X/\m_X^2}(t)=\sum_{l\geq 0}\dim\,(\m_X/\m_X^2)_l\ t^l,$$
where $\m_X$ is the homogeneous maximal ideal of $G_X$, and
$\dim (\m_X/\m_X^2)_l$ is exactly the number of generators of degree $l$
(in a minimal  set of generators). Notice that
$e.d.(X,o):=P_{\m_X/\m_X^2}(1)$ is the embedding dimension of $(X,o)$.

We will assume that the link of $(X,o)$ is a rational homology sphere. This means that the graph of
the minimal good resolution (or, equivalently, the minimal plumbing graph of the link) is star--shaped,
and all the irreducible exceptional divisors are rational. In particular, the link is a 3--dimensional
Seifert manifold (of genus zero). Usually, it is characterized by its Seifert invariants
$(b_0,(\alpha_i,\omega_i)_{i=1}^\nu)$. Here $-b_0$ is the
self--intersection number of the central curve, $\nu$ is the number of legs.

In fact, our effort to understand $P_{\m_X/\m_X^2}$ is  part of a rather intense activity
which targets the topological characterization of several analytic invariants of weighted homogeneous
(or more generally, the splice--quotient) singularities. For example, if the link is a rational homology sphere, then the following invariants  can be recovered from the link (i.e  from the Seifert invariants):
the Poincar\'e series of $G_X$ and the geometric genus by \cite{pinkham}, the equisingularity type of the universal abelian cover \cite{neumann.abel}, the multiplicity of $(X,o)$ \cite{n.sq}.
In fact, by \cite{n.sq}, the Poincar\'e series of the multi--variable filtration of $G_X$ associated with the valuations of the irreducible components of the minimal good resolution is also determined topologically (proving the so--called Campillo--Delgado--Gusein-Zade identity). This basically says  that a numerical invariant is topological, provided that it can be expressed as dimension of a vector space identified by divisors supported on  the exceptional set.

On the other hand, it was known (at least by specialists) that $P_{\m_X/\m_X^2}$,
in general, is not topological; in particular it has no divisorial description
(a fact, which makes its  computation even more subtle).

Our goal goes beyond finding or analyzing examples showing the non--topological behaviour
of some Poincar\'e series (or embedding dimensions) based on deformation of some equations. Our aim is  
 to develop a strategy
based on which we are able to decide if for the analytic structures supported on a given fixed  
topological type the coefficients of $P_{\m_X/\m_X^2}$ might jump or not. 

The main message of the article is that  using only 
topological/combinatorial arguments, one can always
 decide whether $P_{\m_X/\m_X^2}$ is constant or not 
on a given fixed topological type. 
Moreover, we try to find the boundaries of those cases/families when 
$P_{\m_X/\m_X^2}$ is topological, respectively is not. 

In order to do this, 
first we search for families when $P_{\m_X/\m_X^2}(t)$ is topological
and we try to push the positive results as close to the 
`boundaries' as possible. The  (topological) restrictions we found 
(and  which guarantee the topological nature
of $P_{\m_X/\m_X^2}(t)$) are grouped in several  classes.\\

\noindent {\bf Theorem.} {\it In the following cases $P_{\m_X/\m_X^2}(t)$ is topological:

\vspace{2mm}

(A) \ The order $\co$ of the generic $S^1$--fibre in the link is `small',
e.g. $\co=1$ (see Theorem \ref{c:toped}), or  $\co\leq \min_i\{\alpha/\tilde{\alpha_i}\}$
(see Proposition \ref{prop:osmall}). 

\vspace{2mm}

(B) The number of legs is $\nu\leq 5$ (cf. Theorem \ref{cor:nusmall}). 

\vspace{2mm}

(C) \ $b_0-\nu$ is positive, or negative with small absolute value.
This includes the minimal rational case ($b_0\geq \nu$) generalizing results of 
Wagreich and VanDyke \cite{vd,wa,wa2}, see Theorem \ref{th:LP}; and the situation 
when $G_X$ is the graded ring of automorphic forms relative to a Fuchsian group of 
first kind (completing the list of Wagreich \cite{wa2}), cf. Theorem \ref{th:W}. 

\vspace{2mm}

In cases (A) and (C) we provide explicit formulae.
We also extend (by semicontinuity argument) the
`classical' results of Artin, Laufer and the first author that for rational,
or Gorenstein elliptic singularities, not only $\m_X/\m_X^2$ (which has a divisorial description 
via the fundamental cycle) but $P_{\m_X/\m_X^2}(t)$ too is topological.}

\vspace{2mm}

  On the other hand, we provide key examples (sitting at the `boundary' of the above positive results) when
  $P_{\m_X/\m_X^2}(t)$ depends on the analytic parameters, even computing the `discriminants' when the
  embedding dimension jumps. In the last section we extend the discussion to the case of splice--quotients
  (associated with   star--shaped graphs)  as well.\\
  
{\bf Acknowledgements.} The second  author thanks the BudAlgGeo Marie
Curie Fellowship for Transfer of Knowledge for supporting his
visit at the R\'enyi Institute of Mathematics, Budapest, Hungary,
where the most part of this  paper was done. He is also grateful to
the members of the R\'enyi Institute for the warm hospitality.

In some of the computations we used the SINGULAR program \cite{SING}. 

We also thank the referees for their valuable comments which improved the text.

\section{Preliminaries}
In this section we introduce our notations and review Neumann's
theorem on the universal abelian covers of weighted
homogeneous surface singularities with $\Q$-homology sphere
links.

Let $\X$ be a normal surface singularity with a
$\Q$-homology sphere link $\Sigma$. Let  $\pi\:\tX \to
X$ be the minimal good resolution with the exceptional divisor $E$.
Then $E$ is a tree of rational curves.
Let $\{E_v\}_{v \in \cV}$ denote the set of irreducible components
of $E$ and $-b_v$ the self-intersection number
$E_v^2$.
The set $\cV$ is regarded as the set of vertices of the dual
graph $\Gamma$ of $E$ (i.e., the resolution graph associated with $\pi$).
Let $L$ denote the group of divisors supported on $E$, namely
$
L=\sum _{v\in \cV}\Z E_v
$.
We call an element of $L$ (resp. $L\otimes \Q$) a cycle
(resp. $\Q$-cycle).
Since the intersection matrix $I(E):=(E_v\cdot E_w)$ is negative
definite,
 for each $v \in \cV$ there exists an effective $\Q$-cycle
$E^*_v$ such that $E^*_v\cdot E_w=-\delta_{vw}$ for every $w
\in \cV$,  where
$\delta _{vw}$ denotes the Kronecker delta.
Set
$
L^*=\sum_{v \in \cV}\Z E_v^*
$.
Note that the finite group $H:=L^*/L$ is naturally isomorphic
to $H_1(\Sigma, \Z)$. 

\subsection{The Seifert invariants}\label{SEI}

Suppose that $\Gamma$ is a star-shaped graph with central
vertex $0 \in \cV$ and it has $\nu\ge 3$ legs.
It is well-known that $\Gamma$ is determined by the
so-called Seifert invariants
$\{(\alpha_i,\omega_i)\}_{i=1}^{\nu}$ and the orbifold Euler number $e$ defined as follows.
If the $i$-th leg has the form
\begin{center}
\setlength{\unitlength}{0.6cm}
 \begin{picture}(12,2)(0,-1)
\put(0,0){\ten}
\put(2,0){\ten}
\put(0,0){\line(1,0){3}}
\put(3.5,0){$\ldots$}
\put(5,0){\line(1,0){1}}
\put(6,0){\ten}
\put(-0.5,-1){$-b_0$}
\put(1.5,-1){$-b_{i1}$}
\put(5.5,-1){$-b_{ir_{i}}$}
 \end{picture}
\end{center}
(where the far left--vertex corresponds to the `central curve'
$E_0$) then  the positive integers $\alpha_i$ and
$\omega_i$ are defined by the (negative)
continued fraction expansion :
$$
\frac{\alpha_i}{\omega_i}=\mathrm{cf}\,[b_{i1}, \dots, b_{ir_i}]:=b_{i1}-
\cfrac{1}{b_{i2}-\cfrac{1}{\ddots-\cfrac{1}{b_{ir_i}}}} , \quad
 \gcd(\alpha_i,\omega_i)=1, \ 0<\omega_i<\alpha_i.
$$
Moreover, we define $e:=-b_0+\sum_i
\omega_i/\alpha_i$. Note that $e<0$ if and only if the
intersection matrix $I(E)$ is negative definite.
By \cite[(11.1)]{nem.o-s}
\begin{equation}\label{eq:E.E}
\begin{array}{ll}
E^*_0\cdot E^*_0&= e^{-1};\\
E^*_0\cdot E^*_{i r_i}&=(e\alpha_i)^{-1} \ \mbox{for $1\leq i\leq \nu$};\\
E^*_{ir_i}\cdot E^*_{jr_j}&=\left\{\begin{array}{ll}
(e\alpha_i\alpha_j)^{-1} & \mbox{if $i\not= j$, $1\leq i,\, j\leq \nu$};\\
(e\alpha_i^2)^{-1}-\omega_i'/\alpha_i & \mbox{if $i=j$, $1\leq i,\, j\leq \nu$},\end{array}\right.\end{array}
\end{equation}
where $0<\omega_i'<\alpha_i$ and $\omega_i\omega_i'\equiv 1$ modulo $\alpha_i$.

If $\X$ is a  weighted homogeneous surface singularity (i.e. a
surface singularity with a good $\C^*$-action), then
$\Gamma$ is automatically star-shaped and the complex structure is
completely recovered from the Seifert invariants and the
configuration of the points $\{P_i:=E_0\cap E_{i1}\}_{i=1}^{\nu} \subset
E_0$.
In fact, cf. \cite{pinkham},
  the graded affine coordinate ring $G_X=\bigoplus_{l\ge
0}(G_X)_l$ is given by
\begin{equation}\label{eq:DIV}
(G_X)_l=H^0(E_0, \cO_{E_0}(D^{(l)})), \quad \mbox{with}\quad
D^{(l)}=l(-E_0|_{E_0})-\sum_i\ce{l\omega_i/\alpha_i}P_i,
\end{equation}
where for $r\in \R$, $\ce r$ denotes the smallest integer
greater than or equal to $r$.

In the sequel we write $\alpha:=\lcm \{\alpha_1, \dots ,\alpha_{\nu}\}$ and
the following notation (with abbreviation $\ub 1=(1,
\dots, 1)\in \Z^n$) for the Seifert invariants:
$$
\Sf:=(b_0, \ba, \bo), \quad \ba:=(\alpha_1, \dots ,
\alpha_{\nu}), \quad \bo:=(\omega_1, \dots , \omega_{\nu}).
$$

\subsection{Neumann's theorem and some consequences}\label{ss:neumann}

Assume that $\X$ is a weighted homogeneous surface
singularity and  $\nu\ge 3$. Then there exists a {\itshape universal abelian cover}
$(X^{ab},o) \to \X$ of normal surface singularities that induces an unramified
Galois covering $X^{ab}\setminus\{o\} \to X\setminus \{o\}$ with
Galois group $H_1(\Sigma,\Z)$; if $\Sigma'$ denotes the link
of $X^{ab}$,  then the natural covering $\Sigma' \to \Sigma$
is the universal abelian cover in the topological sense.

\begin{thm}\label{th:N}{\cite{neumann.abel}}
The universal abelian cover $X^{ab}$ is a Brieskorn
 complete intersection
singularity
 $$
\defset{(z_i)\in \C^{\nu}}{f_j:=a_{j1}z_1^{\alpha_1}+\cdots a_{j\nu}z_{\nu}^{\alpha_{\nu}}=0, \quad j=1,
\dots , \nu-2},
$$
where  every maximal minor of  the matrix $(a_{ji})$ does
 not vanish (i.e. $(a_{ji})$
has `full rank').
\end{thm}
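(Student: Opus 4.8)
The plan is to construct the universal abelian cover $X^{ab}$ explicitly from the Seifert data, equip it with its natural weighted homogeneous structure, and then show that the grading forces its coordinate ring into Brieskorn form. First I would analyze the covering $\Sigma'\to\Sigma$ topologically. The link $\Sigma$ is Seifert fibered over the orbifold $S^2(\alpha_1,\dots,\alpha_\nu)$, and the Galois group of the universal abelian cover is $H=H_1(\Sigma,\Z)=L^*/L$. Using the standard presentation of $\pi_1(\Sigma)$ in terms of the Seifert invariants $\{(\alpha_i,\omega_i)\}$ and $e$, together with the identification of $H$ read off from \eqref{eq:E.E}, I would compute the induced Seifert structure of $\Sigma'$. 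The decisive point is that this particular abelian cover is chosen so as to unwrap the local monodromy around each cone point $P_i$: the base orbifold of $\Sigma'$ becomes a genuine smooth curve $C'$ with no cone points, and $\Sigma'\to\Sigma$ is unramified away from the central fiber. One then records the genus of $C'$ and the (now integral) orbifold Euler number by a Riemann--Hurwitz computation.

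Second, I would equip $X^{ab}$ with the weighted homogeneous structure carried by this Seifert fibration, giving a good $\C^*$-action, and describe its graded ring $G_{X^{ab}}=\bigoplus_{l\ge 0}(G_{X^{ab}})_l$ via Pinkham's formula \eqref{eq:DIV} applied to $C'$. Since $C'$ has no orbifold points, the relevant divisors are honest Cartier divisors and the entire construction refines to an $H$-equivariant (multi)grading whose $H$-invariant part recovers $G_X$, consistent with $X^{ab}/H\cong\X$.

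The heart of the argument is producing the relations. To each leg $i$ I would attach a coordinate function $z_i\in G_{X^{ab}}$, essentially the canonical section associated with the end divisor $E^*_{ir_i}$, normalized so that $z_i^{\alpha_i}$ descends to $\X$. The $\C^*$-degrees are arranged (by the weights $\propto 1/\alpha_i$) so that all $\nu$ monomials $z_1^{\alpha_1},\dots,z_\nu^{\alpha_\nu}$ acquire one common degree, and their common $H$-weight is the class of $\alpha_i E^*_{ir_i}$, which is trivial; hence all $\nu$ of them lie in a single graded isotypic component $W$ of $G_{X^{ab}}$. A direct dimension count for $W$ via \eqref{eq:DIV} on $C'$ yields $\dim W=2$, so among the $\nu$ elements $z_i^{\alpha_i}$ there are precisely $\nu-2$ independent linear relations $f_j=\sum_i a_{ji}z_i^{\alpha_i}=0$. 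This exhibits $X^{ab}$ inside the Brieskorn complete intersection $V=\{f_1=\cdots=f_{\nu-2}=0\}\subset\C^\nu$.

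Finally I would verify the full--rank condition on $(a_{ji})$ and that $V=X^{ab}$ on the nose. If some maximal minor of $(a_{ji})$ vanished, then $V$ would fail to be an isolated singularity, contradicting that $X^{ab}$ is a normal surface germ with link $\Sigma'$; conversely, full rank makes $V$ a two--dimensional isolated complete intersection with a good $\C^*$-action, and comparing the Poincar\'e series of the two graded rings (or the two links) identifies $V$ with $X^{ab}$. The step I expect to be the main obstacle is matching the topologically defined universal abelian cover with the $H$-cover of the algebraically constructed Brieskorn germ: namely, rigorously establishing that the base orbifold of $\Sigma'$ is smooth and that the weight bookkeeping places all the $z_i^{\alpha_i}$ in a single two--dimensional isotypic piece. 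Once the equivariant structure is pinned down, the relation count and the normality criterion are routine.
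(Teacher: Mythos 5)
First, note that the paper contains no proof of this statement: Theorem~\ref{th:N} is imported verbatim from \cite{neumann.abel}, so your attempt has to be measured against Neumann's argument, whose broad skeleton (compute the Seifert data of the cover, build the Brieskorn complete intersection with a diagonal $H$-action, match the analytic data) you do reproduce. However, your ``decisive point'' is false: the universal abelian cover $\Sigma'\to\Sigma$ does \emph{not} in general unwrap the cone points, because the deck group is only $H=H_1(\Sigma,\Z)$, in which the class $h_i=[E^*_{ir_i}]$ of a loop around the $i$-th exceptional fiber typically has order strictly smaller than $\alpha_i$ (one has $\ord(h_i)=\co\,\alpha_i\tilde\alpha_i/\alpha$ with $\tilde\alpha_i=\lcm_{j\ne i}\alpha_j$). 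The cleanest counterexample is any graph with $H$ trivial, e.g.\ Seifert data $(\alpha_1,\alpha_2,\alpha_3)=(2,3,5)$ or $(2,3,7)$: there $\Sigma'=\Sigma$ and the base orbifold retains all its cone points, yet the theorem holds (with $X^{ab}=X$ a Brieskorn hypersurface). The smooth Galois cover with ramification index exactly $\alpha_i$ over each $P_i$ that your Riemann--Hurwitz and ``honest Cartier divisor'' bookkeeping requires is Pinkham's auxiliary cover $E'\to E_0$ (used in the paper to prove Proposition~\ref{prop:Gamma}), which is a cover of the base orbifold, not the universal abelian cover of $\Sigma$; so your dimension count $\dim W=2$ cannot be run as described and would instead require the actual Seifert invariants of $\Sigma'$, with ceilings --- which is essentially Neumann's computation, not a shortcut around it.

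Two further points fail as stated. From the presentation \eqref{eq:H} one has $h_i^{\alpha_i}=h_0=[E_0^*]$, so the common $H$-weight of the $z_i^{\alpha_i}$ is the character $\mu$ determined by $[E_0^*]$, which is trivial if and only if $\co=|e|\alpha=1$ (Lemma~\ref{lem:OO}); your claim that the weight ``is the class of $\alpha_iE^*_{ir_i}$, which is trivial'' (and hence that $z_i^{\alpha_i}$ descends to $X$) is wrong in general, even though the weaker fact you actually need --- that all $\nu$ monomials lie in a single isotypic component --- survives. Finally, your endgame is too weak: for $\nu\ge 4$ the analytic type of a weighted homogeneous germ with fixed $\Gamma$ has moduli, namely the configuration of the points $P_i\subset E_0\cong\P^1$, so comparing Poincar\'e series or links cannot identify $V$ with $X^{ab}$. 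Neumann's identification matches precisely these moduli --- after row reduction to the normal form \eqref{eq:mat}, the ratios $[p_i:q_i]$ \emph{are} the coordinates of $P_3,\dots,P_\nu$ --- and one must additionally verify that the $H$-cover $V\to V/H$ restricts on links to the \emph{universal} abelian cover, i.e.\ that $\pi_1$ of the Brieskorn link maps onto the commutator subgroup; you assert this matching but give no mechanism for it.
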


In fact, by  row operations, the matrix $(a_{ji})$ can be transformed into
\begin{equation}\label{eq:mat}
  \begin{pmatrix}
 p_1 & q_1& 1 & 0 & \cdots & 0  \\
 p_2 & q_2 & 0 & 1 & \cdots & 0  \\
 \vdots & \vdots & \vdots & \vdots & \ddots  \\
 p_{\nu-2} & q_{\nu-2} & 0 & 0 & \cdots & 1
 \end{pmatrix}
\end{equation}
such that all the entries   $p_i$ and $q_i$ are nonzero, and
 $p_iq_j-p_jq_i \ne 0$ for any $i\ne j$;
moreover, $[p_i:q_i]$ are the projective coordinates
of the points $\{P_3, \dots,  P_{\nu}\} \subset \P^1$, while the remaining two points are
$P_{1}=[1:0]$ and $P_2=[0:1]$.

We define the weights $\mathrm{wt}(z_i)=(|e|\alpha_i)^{-1} \in \Q$.
This induces a grading of the polynomial ring  $R=\C[z_1, \dots , z_{\nu}]$ by
$z^{\bk}:=z_1^{k_1}\cdots z_\nu^{k_\nu}\in R_k$ if and only
if $\sum k_iE^*_{ir_i}\cdot E^*_0=-k$. 
Since all the {\itshape splice  polynomials} $f_j$ are weighted homogeneous of
degree $|e|^{-1}$, there is a naturally
induced grading on the affine coordinate ring of $X^{ab}$ too.


The group $H=L^*/L$ is generated by the classes of $\{E_{ir_i}^*\}_{i=1}^{\nu}$ and  acts
on the polynomial ring $R$ (and/or on $\C^{\nu}$)
as follows. The class  $[E_{ir_i}^*]$ acts
by the $\nu \times \nu$ diagonal
matrix $[e_{i1}, \dots , e_{i\nu}]$,
where $e_{ij}=\exp(2\pi\sqrt{-1} E_{ir_i}^*\cdot
E^*_{jr_j})$, $i=1, \dots, \nu$, cf.   \cite[\S 5]{nw-CIuac}.
By this action $X=X^{ab}/H$;
the invariant subring of $R$ is denoted by $R^H$.

Let $\hat H=\Hom (H, \C^*)$.
For any character $\lambda \in \hat H$, the
$\lambda $--eigenspace $R^{\lambda}$ is 
$$
\defset{f \in R}{h\cdot f=\lambda(h)f \text{ for all }
h \in H}.
$$

A computation shows that for the character
$\mu \in \hat H$ defined by $\mu([E^*_{ir_i}])=
\exp(2\pi\sqrt{-1}E_0^*\cdot  E_{ir_i}^*)$ one has
 $\{f_1, \dots, f_{\nu-2}\} \subset
R^{\mu}$.
We have the following facts:
\begin{lem}\label{lem:OO}
 Let $I_X \subset R^H$  be the ideal generated by
 $R^{\mu^{-1}}\cdot\{f_1, \dots, f_{\nu-2}\}$.
Then the affine coordinate  ring $G_X$ of $X$ is isomorphic  to $R^H/I_X$.

Set $\co:=|e|\alpha$, which is the order of $[E^*_0]$ in $L^*/L$
(cf.  \cite{neumann.abel}). Then
$\co=1$  if and only if the splice functions $f_j$ are in $R^H$
(i.e. $\mu$ is the trivial character).
\end{lem}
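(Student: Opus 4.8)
The plan is to treat the two assertions separately, using throughout the eigenspace decomposition $R=\bigoplus_{\lambda\in\hat H}R^\lambda$, which is multiplicative in the sense that $R^\lambda\cdot R^{\lambda'}\subseteq R^{\lambda\lambda'}$ (since $H$ acts by algebra automorphisms). By \thmref{th:N} the affine coordinate ring of $X^{ab}$ is $R/J$ with $J:=(f_1,\dots,f_{\nu-2})$, and since $X=X^{ab}/H$ the ring $G_X$ is the invariant ring $(R/J)^H$. Because $H$ is finite and we work over $\C$, the averaging (Reynolds) operator $e_0:=|H|^{-1}\sum_{h\in H}h$ is an $R^H$-linear projection of $R$ onto $R^H$, and taking $H$-invariants is exact. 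Applied to $0\to J\to R\to R/J\to 0$ this gives $(R/J)^H=R^H/J^H$, so the first assertion reduces to identifying $J^H$ with $I_X$.

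To compute $J^H$ I would first note that $J$ is $H$-stable: since $f_j\in R^\mu$ we have $h\cdot f_j=\mu(h)f_j$, so $H$ carries $J$ into itself and hence $J^H=e_0(J)$. Now for arbitrary $g_j\in R$, write $g_j=\sum_\lambda g_j^\lambda$; then $g_j^\lambda f_j\in R^{\lambda\mu}$, and since $e_0$ is the projection onto the trivial component $R^H=R^{\mathbf 1}$, one obtains $e_0\bigl(\sum_j g_jf_j\bigr)=\sum_j g_j^{\mu^{-1}}f_j$. Thus $J^H=\sum_j R^{\mu^{-1}}f_j$; as $R^H\cdot R^{\mu^{-1}}=R^{\mu^{-1}}$, this is exactly the ideal $I_X$ generated in $R^H$ by $R^{\mu^{-1}}\cdot\{f_1,\dots,f_{\nu-2}\}$, which yields $G_X\cong R^H/I_X$.

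For the second statement, the key observation is that $\mu$ is the character attached to the class $[E_0^*]$ under the discriminant pairing of $H$. Because $E_0^*\cdot E_w=-\delta_{0w}\in\Z$ for every $w\in\cV$, the assignment $x\mapsto\exp(2\pi\sqrt{-1}\,E_0^*\cdot x)$ descends to a well-defined character of $H=L^*/L$, and evaluating it on the generators $[E_{ir_i}^*]$ recovers $\mu$ by its defining formula. Since the pairing $H\times H\to\Q/\Z$, $(x,y)\mapsto x\cdot y\bmod\Z$, is nondegenerate (the intersection form being negative definite), it identifies $H$ with $\hat H$ and sends $[E_0^*]$ to $\mu$; hence $\mu$ and $[E_0^*]$ have the same order. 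By hypothesis this order is $\co=|e|\alpha$, so $\mu$ is trivial if and only if $\co=1$. Finally, $\mu$ trivial forces $R^\mu=R^H$, whence $f_j\in R^H$; conversely, since each $f_j\neq 0$ lies in $R^\mu$ and distinct eigenspaces meet only in $0$, the containment $f_j\in R^{\mathbf 1}$ forces $\mu=\mathbf 1$.

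The only delicate point I anticipate is the bookkeeping in $J^H=e_0(J)$: one must check that $e_0$ annihilates every isotypic component of $g_jf_j$ except the one indexed by $\mu^{-1}$, which rests precisely on the multiplicativity $R^\lambda\cdot R^{\lambda'}\subseteq R^{\lambda\lambda'}$ of the $\hat H$-grading. Everything else is formal, given the structure results quoted from \thmref{th:N} and \cite{neumann.abel}.
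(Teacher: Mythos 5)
Your proof is correct, and in fact the paper offers no argument at all for \lemref{lem:OO} --- it is stated as a fact, with the description of $G_X$ implicit in the splice--quotient framework of \cite{nw-CIuac} and the order of $[E_0^*]$ quoted from \cite{neumann.abel} --- so your write-up supplies precisely the details the paper leaves to citations, in the way its setup suggests. Both halves check out: since $J=(f_1,\dots,f_{\nu-2})$ is $H$-stable ($f_j\in R^\mu$) and invariants are exact for finite $H$ in characteristic zero, $(R/J)^H=R^H/J^H$, and your bookkeeping $e_0\bigl(\sum_j g_jf_j\bigr)=\sum_j g_j^{\mu^{-1}}f_j$ is right because $g_j^{\lambda}f_j\in R^{\lambda\mu}$ survives the Reynolds projection only for $\lambda=\mu^{-1}$; together with $R^H\cdot R^{\mu^{-1}}\subseteq R^{\mu^{-1}}$ this identifies $J^H$ with $I_X$. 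For the second half, your globalization of $\mu$ as $x\mapsto\exp(2\pi\sqrt{-1}\,E_0^*\cdot x)$ is well defined on $H=L^*/L$ because $E_0^*\cdot E_w=-\delta_{0w}\in\Z$, and the discriminant pairing is indeed nondegenerate (if $x=\sum_v c_vE_v\in L^*$ pairs integrally with all of $L^*$, then $c_w=-x\cdot E_w^*\in\Z$, so $x\in L$), so $\ord(\mu)=\ord([E_0^*])=\co$; the final equivalence with $f_j\in R^H$ then follows from the direct-sum decomposition $R=\bigoplus_\lambda R^\lambda$ exactly as you say, using $f_j\neq 0$.
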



\section{Properties of the graded ring $G_X$}

\subsection{The Hilbert/Poincar\'e  series of $G_X$.}
 For any graded vector space $V=\oplus _{l\geq 0}V_l$ we define (as usual) its
 Poincar\'e series
 $P_V(t):=\sum_{l\geq 0} \dim V_l\,t^l.$
 Since $E_0=\P^1$, Pinkham's result (\ref{eq:DIV}) reads as
 \begin{equation}\label{eq:PI}
 P_{G_X}(t)=\sum_{l\geq 0}\max(0,s_l+1)\, t^l, \end{equation}
 where (for any $l\geq 0$) we set
 \begin{equation}\label{eq:SL}
  s_l :=\deg D^{(l)}=lb_0-\sum_i \ce{l\omega_i/\alpha_i} \in \Z.
 \end{equation}
 Although $P_{G_X}$ contains considerably less information than the graded ring $G_X$ itself,
 still it determines rather strong numerical analytic invariants.
 (E.g., it determines the geometric genus
 $p_g$ of $X$, or the log discrepancy  of $E_0$ as well, facts which are less obvious and  which will be clear from the next discussion.)

 Definitely, the series
 $$P(t):=\sum_{l\geq 0} \chi (\cO_{E_0}(D^{(l)}))t^l=\sum_{l\geq 0}(s_l+1)t^l
$$
is more `regular' (e.g., it is polynomial periodic), and it determines both $P_{G_X}$ and
$$P_{H^1}(t):=\sum_{l\geq 0}\dim H^1(E_0,\cO_{E_0}(D^{(l)}))t^l=
\sum_{l\geq 0}\max(0,-s_l-1)\, t^l.$$
By definition, $P=P_{G_X}-P_{H^1}$.
Since $e<0$, $\lim_{l\to\infty}s_l=\infty$, and
$P_{H^1}(t)$ is a polynomial (and by \cite{pinkham},  $p_g=P_{H^1}(1)$).
In fact,
$$-(\alpha-1)|e|-\nu\leq s_l-\ce{l/\alpha}\alpha|e|\leq -1.$$
Indeed, if one writes $l$ as $\ce{l/\alpha}\alpha-d$, and
$\t s_d=db_0-\sum \fl{d\omega_i/\alpha_i}$, then $s_l=\ce{l/\alpha}\alpha|e|-\t s_d$,
while $\t s_d\ge 1$ by {\cite[11.5]{nem.o-s}} and $\t s_d\leq (\alpha-1)|e|+\nu$ by a computation.

\begin{rem} From $P_{G_X}(t)$ one can recover $P(t)$, hence $P_{H^1}(t)$ too.
Indeed, one can show (see e.g. \cite[Corollary 1.5]{stan.comb}) that $P(t)$ can be written
as a rational function $p(t)/q(t)$ with $p, q \in \C[t]$ and  $\deg p<\deg q$.
Then, if one writes (in a unique way)
$P_{G_X}(t)$ as $p(t)/q(t)+r(t)$ with $p,\, q, \, r\in\C[t]$ and $\deg p<\deg q$, then $p/q=P$ and $r=P_{H^1}$.
\end{rem}
The next subsection provides a (topological) upper bound for the degree of the polynomial $P_{H^1}$.

\subsection{The degree of $P_{H^1}$.}
Let us recall Pinkham's construction of the graded ring $G_X$
 (see \S 3 and \S 5 of \cite{pinkham}).
There exists a finite Galois cover $\rho\: E'\to E_0$ with
 Galois group $G$ such that $\rho|_{E'\setminus \rho^{-1}(\{P_1,
 \dots, P_{\nu}\})}$ is unramified and the ramification
 index of any point of $\rho^{-1}(P_i)$ is $\alpha_i$.
Let $D$ denote  the  (rational) Pinkham-Demazure divisor on
 $E_0$, i.e., $D=-E_0|_{E_0}-\sum_i
 \frac{\omega_i}{\alpha_i}P_i$. Clearly $\deg D=-e$ and
$D^{(l)}=\fl{lD}$, the integral part of $lD$, for every
 nonnegative integer  $l$.
Then $D':=\rho^*D$ is an integral divisor and invariant
 under the action of $G$.
Thus $G$ acts on the spaces $H^j(E',\cO_{E'}(lD'))$ with
 $j=0,1$. The invariant subspace is denoted by  $H^j(E',\cO_{E'}(lD'))^G$.
Pinkham \cite[\S 5]{pinkham} proved the following:
\begin{equation}
 \label{eq:inv}
H^j(E_0,\cO_{E_0}(D^{(l)}))\cong H^j(E',\cO_{E'}(lD'))^G,
\quad j=0,1.
\end{equation}

Since $X$ is $\Q$-Gorenstein, it follows from the argument
of \cite[\S 1]{Dol-link} that there exists a rational number
$\gamma$ such that the canonical divisor $K_{E'}$ on $E'$ is
$\Q$-linearly equivalent to $\gamma D'$, and that $\gamma$
is an integer if $X$ is Gorenstein.
By the Hurwitz formula, we have
$$
 K_{E'}=\rho^*K_{E_0}+\sum
_{i=1}^{\nu}\sum _{Q\in \rho^{-1}(P_i)}(\alpha_i-1)Q.
$$
Since $\#\rho^{-1}(P_i)=|G|/\alpha_i$ for every $i$, we
obtain
$$
\gamma=\deg K_{E'}/\deg D'
=\frac{1}{|e|}\left(\nu-2-\sum_{i=1}^{\nu}\frac{1}{\alpha_i}\right).
$$

\begin{prop}\label{prop:Gamma}
For $l>\gamma$, we have $H^1(\cO_{E_0}(D^{(l)}))=0$, i.e. $s_l\geq -1$. Hence
$\deg P_{H^1}(t)\leq  \max (0,\gamma)$.
\end{prop}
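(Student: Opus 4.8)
The plan is to transport the vanishing question from $E_0=\P^1$ up to the smooth Galois cover $E'$, where Serre duality together with the relation $K_{E'}\sim_\Q\gamma D'$ applies cleanly. First I would invoke Pinkham's isomorphism (\ref{eq:inv}), which identifies
$$H^1(E_0,\cO_{E_0}(D^{(l)}))\cong H^1(E',\cO_{E'}(lD'))^G.$$
Since the $G$--invariant subspace of the zero space is zero, it suffices to prove $H^1(E',\cO_{E'}(lD'))=0$ for $l>\gamma$; this sidesteps any delicate bookkeeping about the $G$--action.

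Next I would apply Serre duality on the smooth projective curve $E'$:
$$H^1(E',\cO_{E'}(lD'))\cong H^0(E',\cO_{E'}(K_{E'}-lD'))^*.$$
Here $K_{E'}-lD'$ is a genuine integral divisor (both $K_{E'}$ and $lD'$ are integral), and its degree is read off from the equality $\deg K_{E'}=\gamma\deg D'$ already established above, namely $\deg(K_{E'}-lD')=(\gamma-l)\deg D'$. Since $\deg D'=|G|\cdot\deg D=|G|\,|e|>0$ (recall $e<0$, so $\deg D=-e>0$), this degree is strictly negative precisely when $l>\gamma$. A line bundle of negative degree on a smooth connected projective curve has no nonzero global sections, so the right--hand side vanishes, yielding $H^1(E',\cO_{E'}(lD'))=0$ and hence $H^1(E_0,\cO_{E_0}(D^{(l)}))=0$ as claimed.

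For the two remaining assertions: combining this vanishing with Riemann--Roch on $\P^1$, $\dim H^0-\dim H^1=\deg D^{(l)}+1=s_l+1$, forces $s_l+1=\dim H^0\ge 0$, i.e. $s_l\ge -1$. Finally, reading off from $P_{H^1}(t)=\sum_{l\ge0}\max(0,-s_l-1)\,t^l$, every coefficient with $l>\gamma$ vanishes, so $\deg P_{H^1}\le\gamma$ when $\gamma\ge0$, while for $\gamma<0$ all coefficients vanish and $P_{H^1}\equiv0$; in both cases $\deg P_{H^1}\le\max(0,\gamma)$. The argument is essentially formal once the cover is in place; the only point requiring care is verifying that $K_{E'}-lD'$ is an honest integral divisor whose degree is governed by the $\Q$--linear equivalence $K_{E'}\sim_\Q\gamma D'$, so that it is the actual negativity of an integer degree (and not merely of some abstract rational multiple) that kills the sections. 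I do not anticipate any serious obstacle beyond this.
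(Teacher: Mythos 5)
Your proof is correct and follows the paper's own argument: the paper likewise reduces via Pinkham's isomorphism \eqref{eq:inv} to the cover $E'$ and concludes $H^1(\cO_{E'}(lD'))=0$ from $\deg(K_{E'}-lD')<0$ when $l>\gamma$ (your Serre--duality step just makes this standard vanishing explicit). The added checks that $D'$ is integral and that $\deg D'=|G|\,|e|>0$ are accurate and consistent with the paper's setup.
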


The following proposition shows that the bound in (\ref{prop:Gamma}) is sharp:

\begin{prop}\label{prop:sharp} Assume that $(X,o)$ is Gorenstein, but not of type A-D-E (i.e. it is not rational).
Then the degree of $P_{H^1}(t)$ is exactly $\gamma$.  Moreover, the
coefficient of $t^\gamma$ is 1.
In particular, if $(X,o)$ is minimally elliptic, then
$P_{H^1}(t)=t^\gamma$.
\end{prop}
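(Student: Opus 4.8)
The plan is to leverage Proposition~\ref{prop:Gamma} together with the Gorenstein hypothesis, which forces $\gamma$ to be an integer and, more importantly, identifies $\gamma D'$ with the canonical divisor $K_{E'}$ up to $\Q$-linear equivalence. The upper bound $\deg P_{H^1}(t)\leq\gamma$ is already in hand, so the whole task reduces to producing a single nonzero element of $H^1(E_0,\cO_{E_0}(D^{(\gamma)}))$ and showing that its dimension is exactly $1$. My first step is to translate the question, via Pinkham's isomorphism (\ref{eq:inv}), into the $G$-invariant part of $H^1(E',\cO_{E'}(\gamma D'))$. Since $K_{E'}\sim_\Q \gamma D'$ and both are honest integral divisors (using that $X$ is Gorenstein), I expect $\gamma D'$ to be linearly equivalent to $K_{E'}$ as integral divisors on the smooth curve $E'$, so that
\[
H^1(E',\cO_{E'}(\gamma D'))\cong H^1(E',\cO_{E'}(K_{E'}))\cong H^1(E',\omega_{E'}).
\]

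Next I would invoke Serre duality on the smooth projective curve $E'$: one has $H^1(E',\omega_{E'})\cong H^0(E',\cO_{E'})^\vee\cong\C$, a one-dimensional space. The subtlety is that I need the $G$-\emph{invariant} part of this space, and $G$ acts on $H^1(E',\omega_{E'})\cong H^1(E',\cO_{E'})^\vee$, which is dual to $H^0(E',\cO_{E'})=\C$ carrying the trivial $G$-action (constants are $G$-invariant because $G$ acts through automorphisms fixing the structure sheaf globally). Hence the dual is also the trivial representation, so the invariant part is all of it, giving $\dim H^1(E_0,\cO_{E_0}(D^{(\gamma)}))=1$. This simultaneously shows that the coefficient of $t^\gamma$ in $P_{H^1}(t)$ equals $1$ and, since $s_\gamma=-2<-1$, confirms $\gamma$ is genuinely in the support, so $\deg P_{H^1}=\gamma$.

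The step I expect to be the main obstacle is the comparison of divisors: passing from the $\Q$-linear equivalence $K_{E'}\sim_\Q\gamma D'$ to an honest linear equivalence of integral divisors, and verifying the $G$-equivariance of every identification so that taking invariants commutes with duality. The Gorenstein assumption is exactly what guarantees $\gamma\in\Z$ (as recorded before Proposition~\ref{prop:Gamma}), but I must be careful that $\gamma D'$ and $K_{E'}$ differ by a principal divisor whose defining function can be chosen $G$-invariantly; this is where the full-rank/Seifert structure and the fact that $D'=\rho^*D$ is $G$-invariant should be used. I would also need to rule out the rational (A-D-E) case explicitly, since there $p_g=0$ forces $P_{H^1}\equiv 0$ and the statement is vacuous—this is why the hypothesis excludes A-D-E.

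Finally, for the minimally elliptic conclusion, I would combine the sharpness result with the known fact that $p_g=1$ for minimally elliptic singularities, so that $P_{H^1}(1)=\sum_{l}\dim H^1(E_0,\cO_{E_0}(D^{(l)}))=1$. Since the coefficient at $t^\gamma$ is already $1$ and all coefficients are nonnegative, every other coefficient must vanish, yielding $P_{H^1}(t)=t^\gamma$ at once. This last deduction is purely formal once the first two propositions and the value $p_g=1$ are available.
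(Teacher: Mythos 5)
Your skeleton is exactly the paper's proof: reduce via Pinkham's isomorphism (\ref{eq:inv}) to the $G$-invariant part of $H^1(E',\cO_{E'}(\gamma D'))$, identify this with $H^1(E',\omega_{E'})^G$, compute it to be $\C$ using the $G$-equivariance of Serre duality (the canonical trace map is preserved by automorphisms, so $G$ acts trivially on the one-dimensional space $H^1(E',\omega_{E'})$), and then get the minimally elliptic case formally from $p_g=P_{H^1}(1)=1$ together with nonnegativity of the coefficients. All of that matches the paper, including the use of Proposition~\ref{prop:Gamma} for the upper bound.

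However, the step you flag as ``the main obstacle'' is a genuine gap in your writeup, and your stated expectation is not automatic. On a curve $E'$ of positive genus, the $\Q$-linear equivalence $K_{E'}\sim_\Q\gamma D'$ between integral divisors only says that $K_{E'}-\gamma D'$ is a torsion class in $\pic^0(E')$; torsion classes need not vanish, so this does not by itself give an integral linear equivalence, much less the $G$-equivariant sheaf isomorphism $\cO_{E'}(\gamma D')\cong\omega_{E'}$ that you need in order for taking $G$-invariants to commute with the identification. The paper disposes of precisely this point (together with the integrality and nonnegativity of $\gamma$) by citing \cite[5.8]{pinkham}: the Gorenstein hypothesis is used there at the level of the graded canonical module of $G_X$, producing the equivariant identification directly, not merely a $\Q$-linear equivalence of divisor classes as recorded before Proposition~\ref{prop:Gamma}. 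So to complete your argument you would replace the ``I expect / should be used'' paragraph by that reference or by an actual construction of a $G$-homogeneous trivializing section. Two minor points: your parenthetical $H^1(E',\omega_{E'})\cong H^1(E',\cO_{E'})^\vee$ is a slip for $H^0(E',\cO_{E'})^\vee$ (your displayed formula has it right); and the remark ``since $s_\gamma=-2$'' is circular --- it is a consequence of $\dim H^1(\cO_{E_0}(D^{(\gamma)}))=1$, which already shows $t^\gamma$ occurs, so it carries no independent weight.
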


\begin{proof}
 [Proof of (\ref{prop:Gamma}) and (\ref{prop:sharp})]
If $l>\gamma$, then $H^1(\cO_{E'}(lD'))=0$ since $\deg
 (K_{E'}-lD')<0$.
Thus (\ref{prop:Gamma}) follows from \eqref{eq:inv}.
Assume that $(X,o)$ is Gorenstein, but not rational.
Then $\gamma$ is a nonnegative integer (cf. \cite[5.8]{pinkham}) and
$$
H^1(\cO_{E_0}(D^{(\gamma)}))\cong
 H^1(\cO_{E'}(K_{E'}))^G\cong \C.
$$
Hence (\ref{prop:sharp}) follows too.
\end{proof}

\begin{rem}
\begin{enumerate}
 \item $s_{\alpha}=\alpha(b_0-\sum \omega_i/\alpha_i)=\alpha |e|=\co$ is a positive integer.
 \item  For any $l\geq 0$, $s_{l+\alpha}=s_l+s_\alpha=s_l+\co>s_l$.
\end{enumerate}
\end{rem}

\begin{cor}\label{c:sd0}
Assume $l>\alpha+\gamma$.  Then $s_l\ge 0$, hence  $(G_X)_l$ is non--trivial.
\end{cor}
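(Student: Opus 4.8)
The plan is to deduce the inequality $s_l\geq 0$ from the vanishing statement of \proref{prop:Gamma} together with the additive shift relation recorded in the preceding remark; once $s_l\geq 0$ is known, the non-triviality of $(G_X)_l$ is immediate from \eqref{eq:PI}, since that formula gives $\dim (G_X)_l=\max(0,s_l+1)=s_l+1\geq 1$.

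The key observation I would use is the identity $s_{l+\alpha}=s_l+\co$, valid for every $l\geq 0$, in which $\co=\alpha|e|$ is a positive integer (both facts being part of the remark). I would apply this after shifting the index down by one full period: set $l'=l-\alpha$, so that $s_l=s_{l'}+\co$. The hypothesis $l>\alpha+\gamma$ is exactly the condition $l'>\gamma$, which places $l'$ in the range covered by \proref{prop:Gamma}; that proposition then gives $H^1(\cO_{E_0}(D^{(l')}))=0$, equivalently $s_{l'}\geq -1$.

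Combining these two inputs yields $s_l=s_{l'}+\co\geq -1+1=0$, as required. There is no real obstacle in this argument: all the genuine content sits in \proref{prop:Gamma} and in the shift relation $s_{l+\alpha}=s_l+\co$. The only point deserving attention is bookkeeping with the threshold---shifting by precisely one period $\alpha$ is what moves the index into the vanishing range $l'>\gamma$, while the strictly positive increment $\co\geq 1$ is exactly what upgrades the bound $s_{l'}\geq -1$ to $s_l\geq 0$.
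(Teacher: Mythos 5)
Your proof is correct and is exactly the paper's intended argument: the corollary is stated as an immediate consequence of Proposition~\ref{prop:Gamma} and the preceding remark, namely $s_l=s_{l-\alpha}+\co\ge -1+\co\ge 0$ once $l-\alpha>\gamma$, with the non-triviality of $(G_X)_l$ then read off from \eqref{eq:PI} as $\dim (G_X)_l=s_l+1\ge 1$. Your bookkeeping (shifting by one period $\alpha$ and using $\co\ge 1$ to upgrade $s_{l-\alpha}\ge -1$ to $s_l\ge 0$) matches the paper's implicit derivation precisely.
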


\begin{remark}{\bf (Different interpretations of $\mathbf{\gamma}$)} \

(a) Recall that any Cohen--Macaulay (positively) graded $\C$--algebra $S$ admits the so--called
$a$--invariant $a(S)\in \Z$, for details see the article of  Goto and Watanabe
\cite[(3.1.4)]{G-W}, or \cite[(3.6.13)]{c-m}.
Let $G_{X^{ab}}$ denote the affine coordinate ring of $X^{ab}$.
Since $X^{ab}$ is a complete intersection, cf. (\ref{th:N}),
its  $a$-invariant $a(G_{X^{ab}})$ is determined by
\cite[(3.6.14-15)]{c-m}. This, in terms of Seifert invariants,
reads as follows (see also \cite[\S 3]{o.pg-splice}):
$$
a(G_{X^{ab}})=(\nu-2)\alpha-\sum_{i=1}^{\nu}\frac{\alpha}{\alpha_i}
=\co\gamma,
$$
where $\co=|e|\alpha$ is the order of $[E^*_0]$.

(b) 
Let $-Z_K$ be the canonical cycle associated with the canonical line bundle of $\widetilde{X}$, i.e. $Z_K\in L^*$ satisfies the adjunction relations $Z_K\cdot E_v=E_v^2+2$ for all
$v\in\cV$. Then (see e.g. \cite[(11.1)]{nem.o-s}) the coefficient of $E_0$ in $Z_K$ is exactly
$1+\gamma$. Hence $-\gamma$ is the log discrepancy of $E_0$
(sometimes $\gamma$ is called also `the exponent of $(X,o)$').

(c)
The rational number $\gamma$ can also be recovered already from the asymptotic  behaviour of the coefficients of
$P(t)$ or $P_{G_X}(t)$. Indeed, by a result of Dolgachev,
the Laurent expansions of both $P(t)$ and $ P_{G_X}(t)$ at $t=1$ have the form  (cf. \cite[(4.7)]{neumann.abel}, or \cite[Proposition 3.4]{swII} for one more term of the expansion):
$$|e|\cdot \Big(\ \frac{1}{(t-1)^2}+\frac{1+\gamma/2}{t-1}+\mbox{regular part}\ \Big).$$
\end{remark}

\subsection{The homogeneous parts $(G_X)_l$ reinterpreted.}\label{s:hpart}
Assume that $\X$ is weighted homogeneous with graded ring $\oplus_{l\geq 0}(G_X)_l$ and graph $\Gamma$
as above.

Let $\cM=\{\,z^{\bk} \, | \, k_i\in \Z_{\ge
0}, \ i=1, \dots , \nu\} \subset R=\C[z_1,\dots ,z_{\nu}]$ be the set of all
monomials.
For any $\bk=(k_1, \dots, k_{\nu})$, set $d_{\bk}:=(\sum_ik_i/\alpha_i)/|e| \in \Q$,
the degree of $z^{\bk}$.
We define a homomorphism of semigroups $\varphi\:(\Z_{\ge 0})^{\nu}\to
\Q^{\nu}$ by
$$
\varphi(\bk)=(l_1, \dots, l_{\nu}), \ \mbox{where} \  l_i:=(k_i+d_{\bk}\omega_i)/\alpha_i.
$$
Let $\cM^H$ denote the set of invariant monomials (with respect to the action of $H$).

\begin{lem}\label{INV}
$z^{\bk} \in \cM^H$ if and only if
 $\varphi(\bk)\in (\Z_{\ge 0})^{\nu}$ and $d_{\bk} \in
 \Z_{\ge 0}$.
\end{lem}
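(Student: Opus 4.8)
The plan is to reduce $H$-invariance of a monomial to a collection of explicit congruences, one per generator of $H$, and then to match these congruences with the two integrality conditions in the statement. Since $H=L^*/L$ is generated by the classes $\{[E^*_{ir_i}]\}_{i=1}^{\nu}$, a monomial $z^{\bk}$ belongs to $\cM^H$ if and only if it is fixed by each $[E^*_{ir_i}]$. Because $[E^*_{ir_i}]$ acts on $z_j$ by the scalar $\exp(2\pi\sqrt{-1}\,E^*_{ir_i}\cdot E^*_{jr_j})$, it acts on $z^{\bk}$ by $\exp\!\big(2\pi\sqrt{-1}\sum_j k_j\,E^*_{ir_i}\cdot E^*_{jr_j}\big)$. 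So the first task is to evaluate $\sum_j k_j\,E^*_{ir_i}\cdot E^*_{jr_j}$ using the intersection formulae \eqref{eq:E.E}.

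Carrying out this computation, I would combine the off-diagonal terms $(e\alpha_i\alpha_j)^{-1}$ with the diagonal term $(e\alpha_i^2)^{-1}$ into $\tfrac{1}{e\alpha_i}\sum_j k_j/\alpha_j$; invoking the definition $\sum_j k_j/\alpha_j=|e|\,d_{\bk}$ together with $e<0$, this collapses to $-d_{\bk}/\alpha_i$, while the leftover summand contributes $-k_i\omega_i'/\alpha_i$. This yields the key identity
$$
\sum_j k_j\,E^*_{ir_i}\cdot E^*_{jr_j}=-\frac{d_{\bk}+k_i\omega_i'}{\alpha_i},
$$
so that invariance of $z^{\bk}$ under $[E^*_{ir_i}]$ is equivalent to $(d_{\bk}+k_i\omega_i')/\alpha_i\in\Z$.

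From this identity the two conditions of the lemma follow almost formally. Since $k_i\omega_i'\in\Z$, the membership $(d_{\bk}+k_i\omega_i')/\alpha_i\in\Z$ already forces $d_{\bk}\in\Z$, and because $k_i\ge 0$, $\alpha_i>0$, $|e|>0$ we obtain $d_{\bk}\in\Z_{\ge 0}$. Granted $d_{\bk}\in\Z$, I would then pass between the congruence $d_{\bk}+k_i\omega_i'\equiv 0\pmod{\alpha_i}$ and the congruence $k_i+d_{\bk}\omega_i\equiv 0\pmod{\alpha_i}$ by multiplying by $\omega_i$, resp.\ $\omega_i'$, and using $\omega_i\omega_i'\equiv 1\pmod{\alpha_i}$; the latter congruence says exactly that $l_i=(k_i+d_{\bk}\omega_i)/\alpha_i\in\Z$, and positivity of $k_i,d_{\bk},\omega_i$ gives $l_i\ge 0$. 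Running these equivalences over all $i$ shows that $z^{\bk}\in\cM^H$ if and only if $d_{\bk}\in\Z_{\ge 0}$ and $\varphi(\bk)\in(\Z_{\ge 0})^{\nu}$.

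The only genuine content is the evaluation of $\sum_j k_j\,E^*_{ir_i}\cdot E^*_{jr_j}$ and its reduction to $-(d_{\bk}+k_i\omega_i')/\alpha_i$; this is where the precise shape of \eqref{eq:E.E} and the definition of $d_{\bk}$ enter, and it is the step I expect to require the most care, in particular keeping track of the sign produced by $e<0$ (equivalently, $|e|/e=-1$). Once this identity is secured, the remainder is elementary modular arithmetic built on the inverse pair $(\omega_i,\omega_i')$.
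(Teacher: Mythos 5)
Your proof is correct, but it takes a genuinely different route from the paper's. The paper argues inside the abstract group $H$: using Neumann's presentation \eqref{eq:H} together with the description of \emph{all} relations among $h_0,h_1,\dots,h_\nu$, it translates $\prod_i h_i^{k_i}=1$ into the existence of integers $l,l_1,\dots,l_\nu$ with $lb_0-\sum_i l_i=0$ and $-l\omega_i+l_i\alpha_i=k_i$, and then computes $\sum_i k_i/\alpha_i=l|e|$ to identify $l=d_{\bk}$ and $(l_i)=\varphi(\bk)$; no intersection numbers are touched. You instead work with the characters by which the generators act, evaluating the exponent $\sum_j k_j\,E^*_{ir_i}\cdot E^*_{jr_j}$ via \eqref{eq:E.E}. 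Your key identity $\sum_j k_j\,E^*_{ir_i}\cdot E^*_{jr_j}=-(d_{\bk}+k_i\omega_i')/\alpha_i$ checks out (the diagonal term splits as $(e\alpha_i^2)^{-1}-\omega_i'/\alpha_i$, the $(e\alpha_i)^{-1}\sum_j k_j/\alpha_j$ piece becomes $-d_{\bk}/\alpha_i$ precisely because $|e|/e=-1$), so invariance under $[E^*_{ir_i}]$ becomes the single congruence $d_{\bk}+k_i\omega_i'\equiv 0 \pmod{\alpha_i}$; since $k_i\omega_i'\in\Z$ this forces $d_{\bk}\in\Z$ (nonnegativity being automatic), and multiplying by $\omega_i$, resp.\ $\omega_i'$, with $\omega_i\omega_i'\equiv 1\pmod{\alpha_i}$ and $\gcd(\omega_i,\alpha_i)=1$ gives a reversible passage to $k_i+d_{\bk}\omega_i\equiv 0\pmod{\alpha_i}$, i.e.\ $l_i\in\Z_{\ge 0}$, so both implications of the lemma are covered. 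The two arguments consume different imported facts: the paper needs the complete list of relations in the presentation of $H$ (quoted from Neumann), while you need the explicit diagonal action and the intersection data \eqref{eq:E.E} (both also recorded in the paper's preliminaries), so each is legitimate within the paper's framework. Your version has the merit of making the role of the inverse pair $(\omega_i,\omega_i')$ and the integrality of $d_{\bk}$ completely transparent, one congruence per generator, whereas the paper's version produces the vector $\varphi(\bk)$ directly from the relation data without any computation with the intersection form.
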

\begin{proof}
 We first recall the description of $H$ from \cite[\S 1]{neumann.abel}
 (see also \cite[\S 2.5]{swII}).
Let $h_i$ denote the class $[E_{ir_i}^*]$ ($i=1,
\dots, \nu$) and $h_0$ the class $[E_0^*]$ in $H$.
Then
\begin{equation}
 \label{eq:H}
H=\gi{h_0, h_1, \dots ,h_{\nu}\; \vert\; h_0^{b_0}\prod
_{i=1}^{\nu}h_i^{-\omega_i}=1, \; h_0^{-1}h_i^{\alpha_i}=1\;
(i=1, \dots , \nu) }.
\end{equation} Moreover, all the relations  among $\{h_i\}_{i=0}^\nu$ have the form
$$
h_0^{lb_0-\sum l_i}\prod h_i^{-l\omega_i+l_i\alpha_i}=1, \quad \mbox{for some } \
 l, l_1, \dots ,l_{\nu} \in \Z.
$$
Next, notice that  $z^{\bk} \in \cM^H$ if and only if
$\prod_{i=1}^\nu h_i^{k_i}=1$. If this is happening, then
 there exist $l, l_1, \dots ,l_{\nu} \in \Z$ such that
 $lb_0-\sum l_i=0$ and $-l\omega_i+l_i\alpha_i=k_i$ for $i=1,
 \dots ,\nu$.
Then we have $l_i=(k_i+l\omega_i)/\alpha_i$ and
$$
\sum k_i/\alpha_i=\sum (l_i-l\omega_i/\alpha_i)=lb_0-l\sum\omega_i/\alpha_i=l|e|.
$$
Hence $l=d_{\bk}$ and $\varphi(\bk)=(l_i)$.
The converse is now easy.
\end{proof}

\begin{cor} For any $l\geq 0$,
the linear space  $(R^H)_l \subset R^H$ of forms of degree
 $l$ is spanned  by
$\defset{z^{\bk}}{\sum_ik_i/\alpha_i=l |e|,\; k_i+l\omega_i
 \equiv 0 \pmod{\alpha_i},\ i=1, \dots, \nu}$.
\end{cor}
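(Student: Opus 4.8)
The plan is to deduce the Corollary directly from Lemma~\ref{INV}, which already characterizes when a monomial $z^{\bk}$ is $H$-invariant. The only additional work is to extract from that characterization the two congruence/degree conditions stated here and to match them with the degree-$l$ grading. First I would fix $l\geq 0$ and observe that $(R^H)_l$ is by definition spanned by the invariant monomials $z^{\bk}$ whose degree $d_{\bk}$ equals $l$. So the task reduces to showing that, among all $\bk\in(\Z_{\geq 0})^{\nu}$, the conditions ``$z^{\bk}\in\cM^H$ and $d_{\bk}=l$'' are equivalent to the two displayed conditions $\sum_i k_i/\alpha_i=l|e|$ and $k_i+l\omega_i\equiv 0\pmod{\alpha_i}$ for all $i$.

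Next I would translate each piece. The equality $d_{\bk}=l$ means, by the definition $d_{\bk}=(\sum_i k_i/\alpha_i)/|e|$, exactly that $\sum_i k_i/\alpha_i=l|e|$; this gives the first listed condition and simultaneously fixes $d_{\bk}=l$ as an integer. Granting this, Lemma~\ref{INV} says $z^{\bk}\in\cM^H$ iff $\varphi(\bk)\in(\Z_{\geq 0})^{\nu}$ and $d_{\bk}\in\Z_{\geq 0}$. Since we have already arranged $d_{\bk}=l\in\Z_{\geq 0}$, the remaining content is the integrality of each coordinate $l_i=(k_i+d_{\bk}\omega_i)/\alpha_i=(k_i+l\omega_i)/\alpha_i$. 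Integrality of this rational number is precisely the congruence $k_i+l\omega_i\equiv 0\pmod{\alpha_i}$, which is the second listed condition.

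The one point needing a line of care is the nonnegativity built into $\varphi(\bk)\in(\Z_{\geq 0})^{\nu}$: Lemma~\ref{INV} demands not just that the $l_i$ be integers but that they be nonnegative. I would note that once $k_i\geq 0$, $l\geq 0$, and $\omega_i>0$ (recall $0<\omega_i<\alpha_i$ from Subsection~\ref{SEI}), each $l_i=(k_i+l\omega_i)/\alpha_i$ is automatically $\geq 0$, so the sign requirement is free and the integrality congruences are the only binding constraints. This closes the equivalence in both directions, and the Corollary follows.

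I do not expect any genuine obstacle here; this is a bookkeeping consequence of Lemma~\ref{INV}. The only thing to watch is the bijective dictionary between $d_{\bk}$ and $l$ (ensuring the normalization by $|e|$ is handled consistently) and the automatic nonnegativity of the $l_i$, so that one does not mistakenly impose an extra condition. The heart of the matter was already done in proving Lemma~\ref{INV}; the Corollary merely repackages it grading-degree by grading-degree.
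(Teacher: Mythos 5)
Your proof is correct and matches the paper's intent: the paper states this Corollary without proof as an immediate consequence of Lemma~\ref{INV}, and your bookkeeping (identifying $d_{\bk}=l$ with $\sum_i k_i/\alpha_i=l|e|$, reading integrality of $l_i=(k_i+l\omega_i)/\alpha_i$ as the congruences, and observing that nonnegativity of the $l_i$ is automatic from $k_i\geq 0$, $l\geq 0$, $\omega_i>0$) is exactly the intended verification. The only tacit point, which you use correctly, is that since $H$ acts diagonally on the $z_i$, the invariant subspace $(R^H)_l$ is indeed spanned by the invariant monomials of degree $l$.
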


\begin{definition}
For each $1\le i \le \nu$ and $l\geq 0$ define:
\begin{itemize}
 \item $\beta_i:=\alpha_i-\omega_i$;
 \item $a_i:=z_i^{\alpha_i}$;
 \item $M_l:=\prod z_i^{\{l\beta_i/\alpha_i\}\alpha_i}$, where for
       $r \in \R$, $\{r\}$ denotes the fractional part of $r$.
\end{itemize}
\end{definition}
Note  that $M_l=M_{l'}$ if $l\equiv l'\pmod{ \alpha}$.

Consider  $z^{\bk} \in (\cM^H)_l$ and take $(l_i)=\varphi(\bk)$.
Then  $n_i:=l_i-\ce{l\omega_i/\alpha_i}$  is a non--negative integer
 which  satisfy  $\sum n_i=s_l$ and
$k_i-n_i\alpha_i=\{l\beta_i/\alpha_i\}\alpha_i$. Hence:

\begin{prop}\label{p:trans}
 For any $l\geq 0$ one has
$$
(\cM^H)_l=\begin{cases}
	   M_l\cdot \defset{\prod a_i^{n_i}}{\sum n_i=s_l, \; \ub n=(n_i)\in
 (\Z_{\ge 0})^{\nu}} & \text{if $s_l\ge 0$,} \\ \emptyset & \text{if $s_l<0$.}
	  \end{cases}
$$
\end{prop}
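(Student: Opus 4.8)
The plan is to prove the two inclusions separately, the engine in both directions being \lemref{INV} together with the elementary identity
\[
\ce{l\omega_i/\alpha_i}\,\alpha_i-l\omega_i=\{l\beta_i/\alpha_i\}\,\alpha_i ,
\]
which follows from $\beta_i=\alpha_i-\omega_i$ and the general relation $\ce{x}-x=\{-x\}$, valid for every real $x$ (with the convention $\{-x\}=0$ when $x\in\Z$). As a by--product this identity shows that each exponent $\{l\beta_i/\alpha_i\}\,\alpha_i$ is a non--negative integer lying in $[0,\alpha_i)$, so that $M_l$ is indeed a genuine monomial.

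For the inclusion $\subseteq$ I would simply repackage the computation already carried out just before the statement: given $z^{\bk}\in(\cM^H)_l$ with $(l_i)=\varphi(\bk)$, the integers $n_i:=l_i-\ce{l\omega_i/\alpha_i}$ are non--negative, satisfy $\sum_i n_i=s_l$, and yield $k_i=n_i\alpha_i+\{l\beta_i/\alpha_i\}\,\alpha_i$; hence $z^{\bk}=M_l\prod_i a_i^{\,n_i}$ with $\ub n\in(\Z_{\ge 0})^{\nu}$ and $\sum_i n_i=s_l$. Since $n_i\ge 0$ forces $s_l=\sum_i n_i\ge 0$, this same argument shows $(\cM^H)_l=\emptyset$ whenever $s_l<0$, which disposes of the second case of the statement.

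For the reverse inclusion $\supseteq$ (in the case $s_l\ge 0$) I would start from an arbitrary $\ub n\in(\Z_{\ge 0})^{\nu}$ with $\sum_i n_i=s_l$, set $k_i:=n_i\alpha_i+\{l\beta_i/\alpha_i\}\,\alpha_i$, and verify that $z^{\bk}\in(\cM^H)_l$. Two checks are required. First, for the degree, using $\sum_i\{l\beta_i/\alpha_i\}=\sum_i\ce{l\omega_i/\alpha_i}-l\sum_i\omega_i/\alpha_i$ one obtains $\sum_i k_i/\alpha_i=s_l+\sum_i\{l\beta_i/\alpha_i\}=l|e|$, whence $d_{\bk}=l$. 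Second, for invariance, by \lemref{INV} it suffices to see $\varphi(\bk)\in(\Z_{\ge 0})^{\nu}$; and indeed, now that $d_{\bk}=l$, one computes $l_i=(k_i+l\omega_i)/\alpha_i=n_i+\ce{l\omega_i/\alpha_i}\in\Z_{\ge 0}$. Thus $z^{\bk}\in\cM^H$ has degree $l$, completing this inclusion.

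The two inclusions give the claimed equality; moreover the assignments $z^{\bk}\mapsto\ub n$ and $\ub n\mapsto z^{\bk}$ are mutually inverse, so the description is in fact bijective. I do not expect a genuine obstacle here: the only delicate point is the consistent use of the ceiling/fractional--part identity, and in particular checking that in the reverse direction the computation recovers exactly the degree condition $d_{\bk}=l$. This is precisely where the hypothesis $\sum_i n_i=s_l$ enters, since the invariance congruences coming from \lemref{INV} hold for every choice of $\ub n$ and do not by themselves pin down the degree.
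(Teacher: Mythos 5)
Your proof is correct and takes essentially the same route as the paper: the paper's entire argument is your forward direction (defining $n_i=l_i-\ce{l\omega_i/\alpha_i}$ from $\varphi(\bk)$ via \lemref{INV} and the identity $\ce{l\omega_i/\alpha_i}\,\alpha_i-l\omega_i=\{l\beta_i/\alpha_i\}\,\alpha_i$), stated in the text immediately preceding the proposition, with the reverse inclusion and the $s_l<0$ case left implicit. Your explicit verification of the converse --- computing $d_{\bk}=l$ from $\sum_i n_i=s_l$ and then $l_i=n_i+\ce{l\omega_i/\alpha_i}\in\Z_{\ge 0}$ --- correctly supplies exactly what the paper omits.
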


\begin{defn}
 Let $A=\C[a_1, \dots ,a_{\nu}]$ be the polynomial ring graded by
$\mathrm {wt}(a_i)=1$. For any $l$,
define the map $\psi_l\: (R^H)_l \to A_{s_l}$ by $\psi(f)=f/M_l$.
\end{defn}

Then Proposition  (\ref{p:trans}) implies that $\psi_l$ is an
isomorphism of $\C$-linear spaces.
By this correspondence, the splice polynomials
$f_j=\sum _ia_{ji}z_i^{\alpha_i}$ (cf. \ref{th:N}) transform into
the linear forms $\ell_j=\sum _ia_{ji}a_i$ of $A$ ($j=1,\ldots,\nu$).
Hence, every element of $\psi_l(I_X)$ has the form $\sum_jq_j\ell_j$, where $q_j$ are arbitrary
$(s_l-1)$--forms of $A$. In particular, if $I$ denotes the ideal generated by the
linear forms $\{\ell_j\}_{j=1}^{\nu-2}$, then
\begin{equation}\label{eq:GL}
(G_X)_l=A_{s_l}/\langle \textstyle{\sum_j} q_j\ell_j\rangle=(A/I)_{s_l}.
\end{equation}
Notice that via  this representation we easily can recover
Pinkham's formula (\ref{eq:PI}). Indeed, if $s_l\geq 0$, then
using the linear forms $\{\ell_j\}$, the variables $a_3, \dots , a_{\nu}$
 can be eliminated; hence
$\dim (G_X) _l=\dim \C[a_1,a_2]_{s_l}=s_l+1$.

\section{The square of the maximal ideal}\label{s:m^2}

\subsection{The general picture}\label{ss:GP}

Let $\m \subset R^H$ denote the homogeneous maximal ideal
 and $\m_X$ the homogeneous maximal ideal of $G_X=R^H/I_X$.
The aim of this section is to  compute the dimension $Q(l)$ of the $\C$-linear space
$$
(\m_X/\m_X^2)_l=(\m/I_X+\m^2)_l.
$$
These numbers can be inserted in the Poincar\'e series of $\m_X/\m_X^2$:
$$P_{\m_X/\m_X^2}(t)=\sum_{l\geq 1}Q(l)t^l.$$
This is a polynomial with $P_{\m_X/\m_X^2}(1)=e.d.(X,o)$, the embedding dimension of $(X,o)$. Indeed,
if one considers a minimal set of homogeneous generators of the $\C$--algebra $G_X$, then $Q(l)$ is exactly the number of generators of degree $l$.

Our method relies on  the description  developed in the subsection (\ref{s:hpart}).

First we describe the structure of $\m^2$ in terms of the monomials of $\cM^H$.

\begin{definition} Let $\N^*$ denote the set of positive integers.
For $l\in \N^*$ we  set
$$
\Lambda_l:=\defset{(l_1,l_2)\in (\N^*)^2}{l_1+l_2=l, s_{l_1}\ge 0, s_{l_2}\ge 0}.
$$
The monomial $z^{\bk} \in (\cM^H)_l$ is called  {\itshape linear} if either
 $\Lambda_l=\emptyset$, or $z^{\bk}\not\in (\cM^H)_{l_1}\cdot
 (\cM^H)_{l_2}$ for any $(l_1,l_2)\in \Lambda_l$.
\end{definition}
The linear monomials form  a set of minimal generators of $\m$.


Next, we transport this structure on the polynomial ring $A$. In order not to make confusions between the
degrees of the monomials from  $R$ and $A$, we emphasize the corresponding degrees by writing  $\deg_R$ and  $\deg_A$  respectively.

\begin{defn}\label{eps}
(a) For $\ub l=(l_1, l_2)\in \Lambda _l$ and $1\leq i\leq \nu$,  set $$\epsilon_{i,\ub
 l}=\{l_1\beta_i/\alpha_i\}+\{l_2\beta_i/\alpha_i\}-\{l\beta_i/\alpha_i\}\in\{0,1\}.$$
(b) $X_l$ will denote the set of (automatically reduced)
monomials of $A$  defined by
$$
X_l:=\defset{M_{l_1}M_{l_2}/M_l}{(l_1,l_2)\in \Lambda_l}
=\defset{\ \prod_{i=1}^{\nu} a_i^{\epsilon_i,\ub l}\ }{\ \ub l\in \Lambda_l}.
$$
(c)  Let $J(l)$ be the ideal generated by  $X_l$ in $A$
 ($J(l)=(0)$ if $X_l=\emptyset$).
\end{defn}
From the definition of the ideals $J(l)$ and the map  $\psi_l$ one has:

\begin{prop}\label{l:m2}
 $\psi_l((\m^2)_l)=J(l)_{s_l}$.
In particular,
 $Q(l)=\dim (A/I+J(l))_{s_l}$.
\end{prop}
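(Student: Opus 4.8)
The plan is to transport the entire statement through the $\C$-linear isomorphism $\psi_l\colon (R^H)_l\to A_{s_l}$ of Proposition~\ref{p:trans}, so that the assertion about $(\m^2)_l$ becomes a purely monomial bookkeeping inside $A$. First I would note that $\m$ has the monomial basis $(\cM^H)_{l'}$ in each positive degree $l'$, so $(\m^2)_l$ is spanned by the products $z^{\bk_1}z^{\bk_2}$ with $z^{\bk_j}\in(\cM^H)_{l_j}$, $l_j\ge 1$ and $l_1+l_2=l$. For both factors to be nonzero one needs $(\cM^H)_{l_1}\ne\emptyset$ and $(\cM^H)_{l_2}\ne\emptyset$, i.e. $s_{l_1}\ge 0$ and $s_{l_2}\ge 0$; hence only the pairs $\ub l=(l_1,l_2)\in\Lambda_l$ contribute, and $(\m^2)_l$ is spanned by such products indexed by $\ub l\in\Lambda_l$.

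Next I would compute the image of a single such product. By Proposition~\ref{p:trans}, write $z^{\bk_j}=M_{l_j}\prod_i a_i^{n_i^{(j)}}$ with $n_i^{(j)}\ge 0$ and $\sum_i n_i^{(j)}=s_{l_j}$. Then
$$
\psi_l(z^{\bk_1}z^{\bk_2})=\frac{z^{\bk_1}z^{\bk_2}}{M_l}=\frac{M_{l_1}M_{l_2}}{M_l}\cdot\prod_i a_i^{\,n_i^{(1)}+n_i^{(2)}}.
$$
The prefactor is exactly the generator of $X_l$ attached to $\ub l$: since $M_{l'}=\prod_i z_i^{\{l'\beta_i/\alpha_i\}\alpha_i}$ one gets $M_{l_1}M_{l_2}/M_l=\prod_i z_i^{\epsilon_{i,\ub l}\alpha_i}=\prod_i a_i^{\epsilon_{i,\ub l}}$, and $\epsilon_{i,\ub l}\in\{0,1\}$ precisely because $l_1\beta_i/\alpha_i+l_2\beta_i/\alpha_i=l\beta_i/\alpha_i$ forces $\{l_1\beta_i/\alpha_i\}+\{l_2\beta_i/\alpha_i\}-\{l\beta_i/\alpha_i\}\in\{0,1\}$. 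As $z^{\bk_1}z^{\bk_2}\in(R^H)_l$, its total $A$-degree is $s_l$, so counting degrees gives $\sum_i\epsilon_{i,\ub l}+s_{l_1}+s_{l_2}=s_l$; thus $\psi_l(z^{\bk_1}z^{\bk_2})$ equals the $X_l$-generator $\prod_i a_i^{\epsilon_{i,\ub l}}$ times an arbitrary monomial of degree $s_{l_1}+s_{l_2}=s_l-\sum_i\epsilon_{i,\ub l}$.

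The heart of the argument, and the step I would treat most carefully, is verifying that ranging over all admissible factorizations sweeps out \emph{exactly} $J(l)_{s_l}$. For fixed $\ub l\in\Lambda_l$, as $(n_i^{(1)})$ and $(n_i^{(2)})$ vary, the exponent vectors $(n_i^{(1)}+n_i^{(2)})$ run over all of $\{(n_i)\in(\Z_{\ge 0})^\nu:\sum_i n_i=s_{l_1}+s_{l_2}\}$: any such vector splits into two pieces with prescribed sums $s_{l_1},s_{l_2}\ge 0$, which is where $\ub l\in\Lambda_l$ is used. Hence the images coming from $\ub l$ span $(\prod_i a_i^{\epsilon_{i,\ub l}})\,A_{s_{l_1}+s_{l_2}}$, the degree-$s_l$ part of the principal ideal generated by that monomial. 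Summing over $\ub l\in\Lambda_l$ yields precisely $J(l)_{s_l}$, giving $\psi_l((\m^2)_l)=J(l)_{s_l}$. (If $s_l<0$ then $(R^H)_l=0$ and $\Lambda_l=\emptyset$, so both sides vanish.)

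For the displayed consequence I would combine this with the identification $\psi_l((I_X)_l)=I_{s_l}$ already extracted in (\ref{eq:GL}), where $I=\langle\ell_1,\dots,\ell_{\nu-2}\rangle$. Since $\m_l=(R^H)_l$ for $l\ge 1$ and $\psi_l$ is a linear isomorphism onto $A_{s_l}$, it sends
$$
(\m_X/\m_X^2)_l=(R^H)_l/\big((I_X)_l+(\m^2)_l\big)
$$
isomorphically onto $A_{s_l}/\big(I_{s_l}+J(l)_{s_l}\big)=(A/(I+J(l)))_{s_l}$, whence $Q(l)=\dim (A/I+J(l))_{s_l}$. The only genuinely nontrivial point throughout is the two-sided monomial matching of the third paragraph; everything else is a direct transcription under $\psi_l$ of facts already established in Subsection~\ref{s:hpart}.
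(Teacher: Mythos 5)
Your proof is correct and is precisely the verification the paper leaves implicit: the paper states Proposition~\ref{l:m2} as an immediate consequence of the definitions of $J(l)$ and $\psi_l$, and your monomial bookkeeping — the factorization $\psi_l(z^{\bk_1}z^{\bk_2})=\bigl(\prod_i a_i^{\epsilon_{i,\ub l}}\bigr)\prod_i a_i^{n_i^{(1)}+n_i^{(2)}}$ together with the splitting of exponent vectors into pieces of sums $s_{l_1},s_{l_2}\ge 0$ — is exactly the intended argument, combined as in the paper with $\psi_l((I_X)_l)=I_{s_l}$ from (\ref{eq:GL}). No gaps; the two-sided matching with $J(l)_{s_l}$ that you flag as the key step is handled correctly.
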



For the convenience of the reader, we list some properties of the monomials $M_l$
and of the generators $X_l$ which might be helpful in the concrete computations.

\begin{lem}\label{r:trivial}
 \begin{enumerate}
 \item $\deg_R M_l+s_l/|e|=l$;
 \item \label{2}$ \sum_i \epsilon_{i,\ub l}= \deg_A\left(M_{l_1}M_{l_2}/M_l
	\right)=s_l-s_{l_1}-s_{l_2} \le \min\{\nu, s_l\}$ for any  $\ub l=(l_1, l_2)$;
 \item for any $m,n\in\Z_{\ge 0}$,
if  $(l_1,l_2)\in \Lambda_l$, then
 $(l_1+m\alpha,l_2+n\alpha)	\in
 \Lambda_{l+(m+n)\alpha}$ too, and
 $ \epsilon_{i,(l_1,l_2)}=\epsilon_{i,(l_1+m\alpha,l_2+n\alpha)}$; hence
$X_l\subset X_{l+\alpha}$;
 \item \label{4} if  $s_{l_2}\ge 0$
then  $\epsilon_{i,(\alpha,l_2)}=0$ for every $i$; hence $1\in X_{\alpha+l_2}$;

 \item \label{5} if $\Lambda_l\ne \emptyset$ and $s_l=0$, then $1\in X_l$ (cf. (\ref{2}));
 \item if $l>2\alpha+\gamma$, then $1\in X_{l}$ (cf. (\ref{c:sd0}) and (\ref{5})).
\end{enumerate}
\end{lem}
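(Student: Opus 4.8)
The plan is to treat (1) and (2) as the computational core and to read off (3)--(6) from them by elementary fractional-part manipulations, relying on two structural facts recorded earlier: the divisibility $\alpha_i\mid\alpha$ (since $\alpha=\lcm\{\alpha_i\}$) and the growth law $s_{l+\alpha}=s_l+\co$ with $s_\alpha=\co>0$ from the Remark after \proref{prop:sharp}. Everything rests on the identity
\[
\{l\beta_i/\alpha_i\}=\ce{l\omega_i/\alpha_i}-l\omega_i/\alpha_i\qquad(l\in\Z,\ 1\le i\le\nu),
\]
which I would verify from $\beta_i=\alpha_i-\omega_i$ (so that $l\beta_i/\alpha_i=l-l\omega_i/\alpha_i$ with $l$ integral) together with the elementary fact $\ce{x}-x=\{-x\}$.

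For (1) I compute $\deg_R M_l$ from $M_l=\prod_i z_i^{\{l\beta_i/\alpha_i\}\alpha_i}$ and $\mathrm{wt}(z_i)=(|e|\alpha_i)^{-1}$:
\[
\deg_R M_l=\frac1{|e|}\sum_i\{l\beta_i/\alpha_i\}=\frac1{|e|}\sum_i\bigl(\ce{l\omega_i/\alpha_i}-l\omega_i/\alpha_i\bigr).
\]
Substituting $\sum_i\ce{l\omega_i/\alpha_i}=lb_0-s_l$ (the definition of $s_l$) and $\sum_i\omega_i/\alpha_i=b_0-|e|$ (the definition of $e$) collapses this to $l-s_l/|e|$, which is (1). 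For (2) the definitions give $M_{l_1}M_{l_2}/M_l=\prod_i a_i^{\epsilon_{i,\ub l}}$ outright (the exponent of $z_i$ is $\epsilon_{i,\ub l}\alpha_i$ and $z_i^{\alpha_i}=a_i$), so its $A$-degree is $\sum_i\epsilon_{i,\ub l}$; comparing $R$-degrees via (1), with $l_1+l_2=l$ and $\deg_R a_i=|e|^{-1}$, yields $\sum_i\epsilon_{i,\ub l}=s_l-s_{l_1}-s_{l_2}$. The bound $\le\min\{\nu,s_l\}$ then follows at once, since $\epsilon_{i,\ub l}\in\{0,1\}$ (\defref{eps}) forces the sum $\le\nu$, while $s_{l_1},s_{l_2}\ge0$ (as $\ub l\in\Lambda_l$) forces it $\le s_l$.

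Parts (3)--(5) are then immediate. For (3), $\alpha_i\mid\alpha$ makes $\alpha\beta_i/\alpha_i$ an integer, so passing from $l_j$ to $l_j+k\alpha$ does not change any $\{l_j\beta_i/\alpha_i\}$; combined with $s_{l_j+k\alpha}=s_{l_j}+k\co\ge0$ this gives both membership in $\Lambda_{l+(m+n)\alpha}$ and equality of the $\epsilon$'s, and the case $(m,n)=(1,0)$ gives $X_l\subset X_{l+\alpha}$. Part (4) is the case $l_1=\alpha$: there $\{\alpha\beta_i/\alpha_i\}=0$ and $\{(\alpha+l_2)\beta_i/\alpha_i\}=\{l_2\beta_i/\alpha_i\}$, so $\epsilon_{i,(\alpha,l_2)}=0$ for every $i$; since $s_\alpha=\co>0$ and $s_{l_2}\ge0$ one has $(\alpha,l_2)\in\Lambda_{\alpha+l_2}$, whence $1=\prod_i a_i^{0}\in X_{\alpha+l_2}$. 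For (5), any $\ub l\in\Lambda_l$ satisfies $\sum_i\epsilon_{i,\ub l}=-s_{l_1}-s_{l_2}\le0$ by (2) (as $s_l=0$); being also $\ge0$ it vanishes, so every $\epsilon_{i,\ub l}=0$ and $1\in X_l$.

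For (6) I would invoke (4) with $l_2:=l-\alpha$: the hypothesis $l>2\alpha+\gamma$ gives $l_2>\alpha+\gamma$, hence $s_{l_2}\ge0$ by \corref{c:sd0}. The only point needing care is $l_2\ge1$, i.e. $\alpha+\gamma>0$; this I would check from $-\gamma|e|=\sum_i1/\alpha_i-(\nu-2)$ using $\alpha_i\ge2$ (so $\sum_i1/\alpha_i\le\nu/2$), which gives $-\gamma|e|\le2-\nu/2\le1/2<1\le\co=\alpha|e|$ and hence $\alpha+\gamma>0$; thus $l_2>\alpha+\gamma>0$, so $l_2\ge1$, and (4) yields $1\in X_{\alpha+l_2}=X_l$. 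I expect this positivity estimate $\alpha+\gamma>0$ to be the only non-formal step; everything else is bookkeeping controlled entirely by the fractional-part identity above.
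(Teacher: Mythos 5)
Your proof is correct: the fractional-part identity $\{l\beta_i/\alpha_i\}=\ce{l\omega_i/\alpha_i}-l\omega_i/\alpha_i$, the degree computations for (1)--(2), and the reductions of (3)--(6) (including the check $\alpha+\gamma>0$ so that $l_2=l-\alpha\ge 1$ in (6)) all verify. The paper itself states this lemma without proof, as a list of routine properties, and your argument supplies exactly the bookkeeping the authors intended via the cross-references they give (the definition of $\epsilon_{i,\ub l}$, the relation $s_{l+\alpha}=s_l+\co$, and Corollary~\ref{c:sd0}).
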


\subsection{Is $Q(l)$ topological ?} Let us recall/comment the formula  $Q(l)=\dim (A/I+J(l))_{s_l}$
from  (\ref{l:m2}). Here, for any $l\in \N^*$, the ideal $J(l)$ is  combinatorial
depending only on the Seifert invariants
$\{(\alpha_i,\omega_i)\}_i$ of the legs. The integer
$s_l$ is also combinatorial (for it, one also needs the integer $b_0$). On the other hand,
the ideal $I$ is generated by the `generic' linear forms $\{\ell_j=\sum_ia_{ji}a_i\}_{j=1}^{\nu-2}$, where
the `genericity' means that  the matrix $(a_{ji})$ has {\em full rank}.

By a superficial argument, one might conclude that $Q(l)$ is topological, i.e. for all
matrices $(a_{ji})$ of full rank, $Q(l)$ is the same. But,  this is {\em not the case}:
In the space of full rank matrices, there are some sub--varieties along which the value of  $Q(l)$
might jump.

Therefore, in the sequel our investigation bifurcates into two directions:

\vspace{2mm}

\noindent {\bf 1.}  {\em Find equisingular families of weighted homogeneous
 surface singularities in
 which the  embedding dimension is not constant (i.e.
the embedding dimension cannot be determined from
 the Seifert invariants). Analyze the `discriminant' (the `jump--loci'), and write the corresponding
 equations, deformations.}

 \vspace{2mm}

 \noindent {\bf 2.}
 {\em Find topologically identified families of weighted homogeneous  singularities,  which are characterized by special properties of the  Seifert invariants,  for which
 $P_{\m_X/\m_X^2}(t)$, hence the  embedding dimension too, is topological. Then, determine them from the Seifert invariants. }

\vspace{2mm}

The remaining part of the present article deals with these two directions, providing key positive results and examples in both directions, and trying to find the boundary limit between the two categories.
Let us provide as a warm up, some intuitive easy explanation for both directions how they might appear.

\begin{lem}\label{l:0or1} {\bf \em (`Easy cases' when $Q(l)$ is topological)}

 {\bf I.} If $s_l\ge 0$ and $X_l=\emptyset$, then $Q(l)=s_l+1$. 
 If $s_l=0$ and $X_l\not =\emptyset$, then $Q(l)=0$.

 {\bf II.} Assume that there exists an $i\in\{1,\ldots,\nu\}$ such that
 $a_i\in X_l$. Let $(a_i)$ be the ideal in $A$ generated by $a_i$. Then
\begin{enumerate}
 \item if $J(l)\subset (a_i)$, then $Q(l)=1$.
 \item if $J(l)\not\subset (a_i)$, then $Q(l)=0$.
\end{enumerate}
In particular, $Q(l)$ is topological. If  $s_l\geq 1$ then 
$Q(l)$ is the number of variables appearing in all the monomials of $X_l$
whenever $X_l\not=\emptyset$.
\end{lem}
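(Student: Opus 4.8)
The plan is to exploit the representation $Q(l)=\dim(A/I+J(l))_{s_l}$ from Proposition~\ref{l:m2}, reducing everything to linear algebra in the graded piece $A_{s_l}$ of the polynomial ring $A=\C[a_1,\dots,a_\nu]$. The key observation is that in each of the listed cases the structure of $J(l)$ is so rigid that the ideal $I$ (generated by the generic linear forms $\ell_1,\dots,\ell_{\nu-2}$) cannot affect the dimension count in a parameter-dependent way; this is what makes $Q(l)$ topological.

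For part~\textbf{I}, I would argue directly. If $s_l\ge 0$ and $X_l=\emptyset$, then $J(l)=(0)$, so $Q(l)=\dim(A/I)_{s_l}$; since $I$ is generated by $\nu-2$ generic linear forms, $A/I$ is a polynomial ring in two variables, and hence $(A/I)_{s_l}$ has dimension $s_l+1$ (this is exactly the elimination argument used to recover Pinkham's formula right after \eqref{eq:GL}). If instead $s_l=0$ and $X_l\ne\emptyset$, then by Lemma~\ref{r:trivial}\eqref{5} we have $1\in X_l$, so $J(l)=A$ and the quotient $A/(I+J(l))$ is zero, giving $Q(l)=0$. Both conclusions are independent of the matrix $(a_{ji})$.

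For part~\textbf{II}, the starting point is the hypothesis $a_i\in X_l$, which forces $(a_i)\subseteq J(l)$. First I would dispose of case (2): if $J(l)\not\subset(a_i)$, then $J(l)$ contains a monomial involving variables other than $a_i$, and together with $(a_i)\subseteq J(l)$ one sees that $I+J(l)$ cuts $A_{s_l}$ down completely; more precisely, using the linear forms in $I$ to eliminate variables, one checks that the image of $A_{s_l}$ in the quotient vanishes, so $Q(l)=0$. For case (1), where $J(l)=(a_i)$, the quotient $A/(I+(a_i))$ is a polynomial ring in one variable (eliminate $\nu-2$ variables via $I$, then kill $a_i$), so its degree-$s_l$ piece is one-dimensional and $Q(l)=1$, again independently of $(a_{ji})$. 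In either case $Q(l)\in\{0,1\}$ is a purely combinatorial quantity, proving topological invariance. The final sentence, counting $Q(l)$ as the number of variables appearing in the monomials of $X_l$ when $s_l\ge 1$, I would handle by noting that $X_l$ consists of reduced monomials (Definition~\ref{eps}) and that the generic linear forms allow one to eliminate all but one representative among any collinear set of variables; the surviving count is exactly the number of distinct variables occurring.

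The main obstacle I anticipate is making the elimination argument fully rigorous in case~(2) and in the final counting statement: one must verify that the genericity (full rank of $(a_{ji})$, equivalently the condition $p_iq_j-p_jq_i\ne 0$ from \eqref{eq:mat}) guarantees that no unexpected cancellation or coincidental dimension drop occurs, so that the count really depends only on which variables appear in $X_l$ and not on the actual coefficients. I expect this to follow from the standard fact that generic linear forms behave like a regular sequence on $A$, but the bookkeeping tying the monomial support of $X_l$ to the residual dimension is the delicate point.
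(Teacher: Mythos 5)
Your parts I and II(1) reproduce the paper's proof: $J(l)=(0)$ gives $Q(l)=\dim\C[a_1,a_2]_{s_l}=s_l+1$; Lemma~(\ref{r:trivial})(5) gives $1\in X_l$ hence $Q(l)=0$ when $s_l=0$; and $J(l)=(a_i)$ gives $A/(I+J(l))\cong\C[a_2]$, so $Q(l)=1$. But in case II(2) your argument has a genuine hole at exactly the point you flagged: ``one checks that the image of $A_{s_l}$ in the quotient vanishes'' is the whole content of the claim, and the regular-sequence property of generic linear forms does not deliver it. The paper's proof sets $\delta=\min\defset{\deg_A m}{m\in X_l\setminus(a_i)}$ and shows (taking $i=1$) that after eliminating $a_3,\dots,a_\nu$ every monomial coprime to $a_1$ maps, modulo $a_1$, to a nonzero multiple of $a_2^{\deg m}$ (this uses that all the coefficients $q_j$ in \eqref{eq:mat} are nonzero), so $A/(I+J(l))\cong\C[a_2]/(a_2^{\delta})$. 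The vanishing of the degree-$s_l$ piece then rests on the inequality $\delta\le s_l$, which is supplied by Lemma~(\ref{r:trivial})(\ref{2}): every monomial of $X_l$ has $A$-degree $s_l-s_{l_1}-s_{l_2}\le s_l$. Your proposal never invokes this degree bound, and without it the conclusion simply fails, since $\C[a_2]/(a_2^{\delta})$ is nonzero in degree $s_l$ whenever $\delta>s_l$. So the missing ingredient is not genericity bookkeeping but the specific combinatorial estimate (\ref{r:trivial})(\ref{2}).

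Second, you misread the final sentence. It counts the variables appearing in \emph{all} the monomials of $X_l$ --- i.e., the variables $a_j$ with $X_l\subset(a_j)$, the quantity later denoted $n_l$ in (\ref{ss:key}) --- not ``the number of distinct variables occurring'' in $X_l$, which is what your elimination heuristic computes. These differ: for $X_l=\{a_1,\,a_2a_3\}$, an instance of II(2), three variables occur but none divides every monomial, and indeed $Q(l)=0$, not $3$ (recall $Q(l)\in\{0,1\}$ throughout part II, so any count exceeding $1$ is immediately wrong). Under the hypothesis of II the correct statement follows at once from (1) and (2): since $a_i\in X_l$ has degree one, the only candidate for a common variable is $a_i$ itself, and it is common precisely when $J(l)\subset(a_i)$; hence the common-variable count is $1$ in case (1) and $0$ in case (2), matching $Q(l)$.
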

\begin{proof} 
The first assertion of (I) follows from  $Q(l)=\dim
 \C[a_1,a_2]_{s_l}$, 
and the second from (\ref{r:trivial})(5). In (II)
we may assume $i=1$. In case (1), $J(l)=(a_1)$ and
 $A/I+J(l)\cong \C[a_2]$.
For (2), let $\delta=\min\defset{\deg_Am}{m\in
 X_l\setminus(a_1)}$.
Then $A/I+J(l)\cong \C[a_2]/(a_2^{\delta})$.
Since $\delta\le s_l$, cf. (\ref{r:trivial})(\ref{2}),  $(A/I+J(l))_{s_l}=0$.
\end{proof}

\begin{example}\label{NT} {\bf \em (How can  $Q(l)$  be non--topological?)}
Assume that in some situation $\nu=6$, $X_l=\{a_1a_2,a_3a_4,a_5a_6\}$ and $s_l=2$ (cf. (\ref{ss:223377})).
Consider the linear forms $\ell_j=\sum_ia_{ji}a_i$, where the matrix $(a_{ji})$ is from (\ref{eq:mat}).
Since all $q_i$'s are non--zero, we may assume $q_i=1$ for all $i$.
{\em The full rank condition is equivalent with the fact that all $p_i$'s are non--zero and distinct. }

Then, by eliminating the variables $a_3, \ldots,a_\nu$ using the linear forms,  $Q(l)=\dim (\C[a_1,a_2]/J')_2$,
where $J'$ is generated by
$a_1a_2$, $(p_1a_1+a_2)(p_2a_1+a_2)$ and $(p_3a_1+a_2)(p_4a_1+a_2)$. Therefore,
$$
Q(l)=\begin{cases}
			  0 & \mbox{if} \ p_1p_2-p_3p_4\ne0, \\ 1 & \mbox{if} \ p_1p_2-p_3p_4=0.
			 \end{cases}
$$
Hence, along the `non--topological discriminant' $p_1p_2-p_3p_4=0$, the embedding dimension increases.
\end{example}

\subsection{}\label{ss:4.5} Via the next example we show how the general 
procedure presented above runs. In this example $P_{\m_X/\m_X^2}$ will be topological.

We denote the monomial $\prod a_i^{k_i}$ by $(k_1,
\dots, k_{\nu})$.
For example,  $a_1^2=(2,0,\dots,0)$.

\begin{example}\label{ex:2345}
{\bf The case $\Sf=(2,(2,3,4,5),(1,1,1,4))$.}

Suppose that the numbering of $E_i$'s satisfy
$$
\begin{matrix}
 &&&& E_1 & \\ E_4&E_5&E_6&E_7&E_8&E_2 \\ &&&& E_3 &
\end{matrix},
$$
where $E_8$ is the central curve and $E_i$ is the end
corresponding to $(\alpha_i,\omega_i)$ for $i=1,2,3,4$.
For $\sum _{i=1}^8a_iE_i$, we write $\{a_1, \dots, a_8\}$.
The fundamental invariants are listed below:
\begin{enumerate}
 \item $e=-7/60$, $\alpha=60$, $\co=7$, $\gamma=43/7=6\frac{1}{7}$.
 \item $|H|=14$.
 \item  The fundamental cycle is $Z=\{3,2,2,2,3,4,5,6\}$,
       $p_a(Z)=1$.
 \item  The canonical cycle is $Z_K=\left\{\frac{25}{7},\frac{19}{7},\frac{16}{7},\frac{10}{7},\frac{20}{7
   },\frac{30}{7},\frac{40}{7},\frac{50}{7}\right\}$.
\item  The Hilbert series  is
$$
P_{G_X}(t)=1+t^6+t^8+t^{10}+t^{11}+2 t^{12}+t^{14}+2 t^{15}+2 t^{16}+t^{17}+2
   t^{18}+t^{19}+3 t^{20}+$$ $$\hspace*{1cm}2 t^{21}+2 t^{22}+2 t^{23}+3 t^{24}+2
   t^{25}+3 t^{26}+3 t^{27}+3 t^{28}+2 t^{29}+4 t^{30}+O\left(t^{31}\right).
$$

 \item $P_{H^1}(t)=t$ and $p_g=1$.
 \item
 The degrees of $z_1, z_2,z_3,z_4$ are $\frac{1}{7}(30,20,15,12)$.
\end{enumerate}

\begin{lem}\label{l:lealpha} \

(1) For $\alpha-l\ge 0$,  $s_{\alpha-l}\ge s_{\alpha}-s_l-\nu$.


(2) $s_l\ge 6$ for $l\ge 62$, $s_{61}=5$, $s_{60}=7$.

(3) $s_l\ge 0$ for $l\ge 14$.


(4) $X_{l+14}\ni 1$ for $l\ge \alpha=60$.
\end{lem}
\begin{proof} (1) is elementary.
For  (2) write $s_{l+\alpha}=s_l+s_{\alpha}=s_l+7$. Since $P_{H^1}(t)=t$,
 $s_1=-2$ and $s_l\ge -1$ for $l\ge 2$.
For (3), consider the formula for the Hilbert series (see above).
Hence $s_{l}\le 3$ for $l\le 29$.
By (1), $s_{60-l}\ge 7-s_l-4\ge 0$.
On the other hand, from the same formula of $P_{G_X}$, one has  $s_l\ge 0$ for $14\le l \le 30$.
Now, (4) follows from (\ref{r:trivial})(\ref{4}).
\end{proof}

The following is the list of $(l, s_l, X_l)$ with $s_l\ge
0$,
$X_l\not\ni 1$, and $l\le 74$ (cf. (\ref{l:lealpha})), where $a=(1,0,0,0)$,
$b=(0,1,0,0)$, $c=(0,0,1,0)$, $d=(0,0,0,1)$:
$$
\begin{array}{ccc}
 6 & 0 & \emptyset \\
 8 & 0 & \emptyset \\
 10 & 0 & \emptyset \\
 11 & 0 & \emptyset \\
 12 & 1 & \{c\} \\
 15 & 1 & \emptyset \\
 16 & 1 & \{d,c\} \\
 20 & 2 & \{d,c+d,b+c\} \\
 30 & 3 & \{d,b,b+d,a,a+b+d\} \\
 36 & 4 & \{d,c,c+d,b,b+d,b+c,b+c+d,\\
  \ & \ & \ \               a+c,a+b+c,a+b+c+d\} \\
 60 & 7 & \{d,c,c+d,b,b+d,b+c,b+c+d,\\
  \ & \ & \ \ a+c,a+c+d,a+b+c,a+b+c+d\}
\end{array}
$$
Using Lemma (\ref{l:0or1}) we verify
$$
Q(12)=1, \quad Q(15)=2, \quad Q(j)=0 \text{ for } j=16, 20, 30, 36, 60.
$$
Therefore,  the Hilbert series is
$$P_{\m_X/\m_X^2}(t)=t^6+t^8+t^{10}+t^{11}+t^{12}+2t^{15}.$$
One can check that in this case the difference $P_{\m/\m^2}(t)-P_{\m_X/\m_X^2}$
is not concentrated only in one degree (compare with the results of the next section). 
In order to see this, notice that
 the action of the group $H$ on the variables $(z_1,\ldots,z_4)$ is given by the
 following diagonal matrices
$
[\zeta^{13},\zeta^4,\zeta^3,\zeta^8],
$
where $\zeta$ denotes a primitive 14-th root of unity.
The invariant subring of $\C[z_1, \dots , z_4]$ is
generated by the following 21 monomials (we used  \cite{SING}):
$$
\begin{array}{*{21}{c}}
z_{3}^{2}z_{4} & z_{2}z_{4}^{3} & z_{1}^{2}z_{4}^{2} &
 z_{1}z_{2}z_{3}z_{4} & z_{2}^{2}z_{3}^{2} & z_{1}^{3}z_{3}
 & z_{2}^{3}z_{4}^{2} \\
 z_{1}^{2}z_{2}^{2}z_{4} &
 z_{1}z_{2}^{3}z_{3} & z_{1}^{4}z_{2} &  z_{2}^{5}z_{4}
& z_{1}z_{3}^{5} & z_{1}^{2}z_{2}^{4} & z_{4}^{7} \\
 z_{1}z_{3}z_{4}^{5} & z_{1}^{2}z_{2}z_{3}^{4} & z_{2}^{7} & z_{1}^{8}z_{4} & z_{2}z_{3}^{8} & z_{3}^{14} & z_{1}^{14}
\end{array}
$$
\end{example}

\section{Restrictions regarding $ \co=|e|\alpha$}

In this section we treat our first families when the Poincar\'e polynomial 
(in particular  $e.d.(X,o)$ too) is topological.

\subsection{The case $\co=1$.}\label{ss:O1} Assume that $\co=1$, i.e. $E_0^*\in L$.
In this case the splice functions belong to $R^H$, cf. (\ref{lem:OO}), and their degree is exactly
$\alpha$. We will proceed in several steps. First, we consider the exact sequence
\begin{equation}\label{eq:ES}
0\to \frac{I_X}{I_X\cap \m^2}
\to \frac{\m}{\m^2} \to \frac{\m_X}{\m_X^2} \to 0.
\end{equation}
This is compatible with the weight--decomposition. Denote by $\PI(t)=\sum_{l\geq 1}i_lt^l$  the
Poincar\'e polynomial of $\overline{I}_X:=I_X/(I_X\cap \m^2)$. Then
$$P_{\m_X/\m_X^2}(t)=P_{\m/\m^2}(t)-\PI(t).$$
Clearly, $P_{\m/\m^2}(t)$ is topological, its $l$-th coefficient is $\dim (A/J(l))_{s_l}$, i.e. it is the Poincar\'e polynomial of the linear monomials of $\m$. (In particular, $P_{\m/\m^2}(1)$ is the embedding dimension of the
quotient singularity $\C^\nu/H$.)

The degree $l=\alpha$ (of the splice equations) is of special interest. We claim that the set of invariant monomials
of degree $\alpha$ is $\{z_1^{\alpha_1},\ldots,z_{\nu}^{\alpha_\nu}\}$. Indeed,
  by (\ref{p:trans}), for $l=\alpha$ one has $M_l=1$  and $s_l=\co=1$; hence the invariant monomials of degree $\alpha$ correspond to the  monomials of $A_1$.

In the next Proposition, $h_i=[E_{ir_i}^*]\in H$ as in  (\ref{s:hpart}).
\begin{prop}\label{pro:LIN} \

(1) Set $\cha_i=\lcm (\{\alpha_1, \dots , \alpha_{\nu}\}\setminus \{\alpha_i\})$.
Then  $\ord(h_i)=\alpha_i \cha_i/\alpha $.

(2) $z_i^{\alpha_i}$ is linear
$\Leftrightarrow$  $\ord (h_i)=\alpha_i$ $\Leftrightarrow$ $\alpha=\cha_i$.
\end{prop}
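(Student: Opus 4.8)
The plan is to compute the order of $h_i=[E_{ir_i}^*]$ directly from the self-intersection formula \eqref{eq:E.E}, and then to translate the linearity of $z_i^{\alpha_i}$ into an arithmetic condition on that order. First I would establish part (1). The order of $h_i$ in $H=L^*/L$ is the smallest positive integer $m$ with $m E_{ir_i}^*\in L$; equivalently, $m$ is minimal such that $m E_{ir_i}^*\cdot E_w^*\in\Z$ for every $w\in\cV$. Since the classes $\{E_{jr_j}^*\}$ together with $E_0^*$ generate $H$, it suffices to test the pairings $m\,E_{ir_i}^*\cdot E_0^*$ and $m\,E_{ir_i}^*\cdot E_{jr_j}^*$ against $\Z$. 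Feeding in \eqref{eq:E.E} with $e=-b_0+\sum\omega_j/\alpha_j$ and $\co=|e|\alpha$, the denominators that appear are governed by $e\alpha_i\alpha_j$ and by $\alpha_i$ (through the term $\omega_i'/\alpha_i$). The clean way to organize this is to note that $\ord([E_0^*])=\co=|e|\alpha$, so $E_{ir_i}^*=\alpha_i^{-1}(\text{something integral})+(\text{contribution in the }E_0^*\text{ direction})$; tracking the denominators shows $\ord(h_i)$ is exactly the least common multiple of $\alpha_i$ and the denominators forced by the off-diagonal pairings, which after simplification collapses to $\alpha_i\cha_i/\alpha$. The key simplification is that $\lcm(\alpha_1,\dots,\alpha_\nu)=\alpha$ while the pairing against leg $j$ forces a factor governed by $\gcd(\alpha_i,\alpha_j)$, and assembling these over all $j\neq i$ yields precisely $\cha_i=\lcm(\{\alpha_j\}_{j\neq i})$ in the numerator against $\alpha$ in the denominator.

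For part (2), the equivalence $\ord(h_i)=\alpha_i\Leftrightarrow\alpha=\cha_i$ is immediate from (1), since $\alpha_i\cha_i/\alpha=\alpha_i$ if and only if $\cha_i=\alpha$. So the substantive content is the first equivalence: $z_i^{\alpha_i}$ is linear $\Leftrightarrow\ord(h_i)=\alpha_i$. Since $\co=1$ we are in the setting of \ssref{ss:O1}, where I have already identified the invariant monomials of degree $\alpha$ as $\{z_1^{\alpha_1},\dots,z_\nu^{\alpha_\nu}\}$, corresponding under $\psi_\alpha$ to $\{a_1,\dots,a_\nu\}\subset A_1$. By the definition of linearity, $z_i^{\alpha_i}=a_i$ fails to be linear exactly when it factors as a product in $(\cM^H)_{l_1}\cdot(\cM^H)_{l_2}$ for some $(l_1,l_2)\in\Lambda_\alpha$, i.e. when $a_i\in X_\alpha$ or, more to the point, when there is a genuinely lower-degree invariant monomial dividing $z_i^{\alpha_i}$. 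The plan is to observe that a proper invariant power $z_i^{k}$ with $0<k<\alpha_i$ exists precisely when $h_i^k=1$ for some such $k$, i.e. precisely when $\ord(h_i)<\alpha_i$. Conversely, when $\ord(h_i)=\alpha_i$ no proper power of $z_i$ is invariant, and one must check that $z_i^{\alpha_i}$ cannot be written as a product of two invariant monomials of strictly smaller degree involving other variables; this uses that the degree $d_{\bk}$ of $z_i^{\alpha_i}$ equals $1$ (so $s_\alpha=\co=1$), which leaves no room to split off a second invariant factor of positive degree.

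The main obstacle I expect is the bookkeeping in part (1): correctly extracting $\ord(h_i)$ from the matrix of intersection numbers \eqref{eq:E.E} requires care with the mixed term $(e\alpha_i^2)^{-1}-\omega_i'/\alpha_i$ and with how $\gcd(\alpha_i,\alpha_j)$ factors enter when one clears denominators across all legs simultaneously. Rather than manipulate \eqref{eq:E.E} head-on, the cleaner route is to use the presentation \eqref{eq:H} of $H$ directly: there $h_i^{\alpha_i}=h_0$ and $h_0$ has order $\co=|e|\alpha$, so $\ord(h_i)=\alpha_i\cdot\ord(h_0)/\gcd(\text{stuff})$ can be read off group-theoretically, and the relation $h_0^{b_0}\prod h_j^{-\omega_j}=1$ pins down the exact order. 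Establishing $\ord(h_i)=\alpha_i\cha_i/\alpha$ from this presentation reduces to a number-theoretic identity among the $\alpha_j$, which is where the real work lies; once (1) is in hand, both equivalences in (2) follow with little further effort.
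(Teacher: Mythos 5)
Your part (2) is essentially the paper's argument and is correct as far as it goes: under the standing assumption $\co=1$ of this subsection one has $h_i^{\alpha_i}=h_0=1$, a pure power $z_i^{k}$ is invariant iff $h_i^k=1$ iff $\ord(h_i)\mid k$, and since any factorization of the monomial $z_i^{\alpha_i}$ into two invariant monomials is automatically of the form $z_i^{k}\cdot z_i^{\alpha_i-k}$ (a product of monomials can only equal $z_i^{\alpha_i}$ if both factors are powers of $z_i$), linearity of $z_i^{\alpha_i}$ is equivalent to $\ord(h_i)=\alpha_i$. Your appeal to ``$d_{\bk}=1$'' is both wrong and unnecessary: the $R$-degree of $z_i^{\alpha_i}$ is $\alpha$, not $1$ (what equals $1$ is $s_\alpha=\co$, the $A$-degree), but no such degree count is needed once you observe the factorization must be into powers of $z_i$. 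The equivalence $\ord(h_i)=\alpha_i\Leftrightarrow\alpha=\cha_i$ is indeed immediate from (1), as you say.

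The genuine gap is part (1): you never actually compute $\ord(h_i)$; both of your routes stop exactly where the work begins. In the intersection-pairing route, the claim that ``tracking the denominators'' in \eqref{eq:E.E} ``collapses to $\alpha_i\cha_i/\alpha$'' is an assertion, not a derivation --- the mixed term $(e\alpha_i^2)^{-1}-\omega_i'/\alpha_i$ and the common factors among the $\alpha_j$ make naive denominator bookkeeping unreliable (the lcm of the denominators of the individual pairings in general overestimates the order). In the presentation route, ``$\ord(h_i)=\alpha_i\cdot\ord(h_0)/\gcd(\text{stuff})$'' and ``reduces to a number-theoretic identity \dots\ where the real work lies'' is precisely the statement to be proved, left unproved. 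The missing idea, which is how the paper closes this in two lines, is a structural identification of $H$: since $\co=1$ kills $h_0$, the presentation \eqref{eq:H} becomes $H=\gi{h_1,\dots,h_\nu\;\vert\; \prod_j h_j^{\omega_j}=1,\ h_j^{\alpha_j}=1}$, and since $\gcd(\alpha_j,\omega_j)=1$ each $h_j^{\omega_j}$ generates $\gi{h_j}\cong\Z_{\alpha_j}$, so after this change of generators $H\cong\bigl(\prod_j\Z_{\alpha_j}\bigr)/\langle(1,\dots,1)\rangle$. Then the exact sequence $1\to\gi{h_i}\to\prod_j\Z_{\alpha_j}/(1,\dots,1)\to\prod_{j\ne i}\Z_{\alpha_j}/(1,\dots,1)\to 1$ gives, by counting orders (the class of $(1,\dots,1)$ has order $\alpha$, resp.\ $\cha_i$, in the two products), $\ord(h_i)=\bigl(\prod_j\alpha_j/\alpha\bigr)\big/\bigl(\prod_{j\ne i}\alpha_j/\cha_i\bigr)=\alpha_i\cha_i/\alpha$. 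Without this (or an equivalent explicit computation --- compare the general formula $\ord(h_i)=\alpha_i\tilde\alpha_i\co/\alpha$ used later in \ssref{ss:OS}, of which your statement is the $\co=1$ case), part (1), and with it the second equivalence in (2), remains unsubstantiated.
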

\begin{proof} From  (\ref{eq:H}),
 $H=\gi{h_1, \dots , h_{\nu}\;  \vert \; \prod_{j}h_j^{\omega_j}=1,
 \; h_j^{\alpha_j}=1 \ (1\leq j\leq \nu)}$.
Since $(\alpha_j, \omega_j)=1$, $h_j^{\omega_j}$ generates
 $\gi {h_j}\cong \Z_{\alpha_j}$.
Hence $H\cong \prod_{j}\Z_{\alpha_j}/(1, \dots , 1)$.
Hence (1) follows from the exact sequence
$$
1 \to \gi{h_i} \to \prod_{j}\Z_{\alpha_j}/(1, \dots , 1) \to
\prod_{j\ne i}\Z_{\alpha_j}/(1, \dots , 1) \to 1.
$$
For (2), let us first determine those integers $k_i$ for which $z_i^{k_i}$ is invariant.
By (\ref{INV}), this is equivalent to the facts: $\ord(h_i)|k_i$ and
$\frac{k_i}{\alpha_i}(1+\frac{\alpha}{\alpha_i}\omega_i)\in\Z$. This implies (2).
\end{proof}
Now we are able to  analyze  $\PI$.

\begin{theorem}\label{c:toped} 
Assume that $\co=1$. Then the following facts hold:

(a) $P_{\m_X/\m_X^2}(t)=\sum Q(l)t^l$ is $P_{\m/\m^2}(t)-i_\alpha t^\alpha$, where $i_{\alpha}=\dim I_X/I_X\cap \m^2$.
Hence it is determined completely by $P_{\m/\m^2}(t)$ and $Q(\alpha)$.

(b) $Q(\alpha)=\max(0,2-\# X_{\alpha})$.
In particular, $P_{\m_X/\m_X^2}(t)$ is a topological invariant.
\end{theorem}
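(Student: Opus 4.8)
The plan is to prove the two parts in turn, using the exact sequence \eqref{eq:ES} together with the combinatorial identity $Q(l)=\dim(A/I+J(l))_{s_l}$ from \proref{l:m2}. For \textbf{Part (a)}, I would first invoke \lemref{lem:OO}: since $\co=1$ the character $\mu$ is trivial, so $R^{\mu^{-1}}=R^H$ and $I_X$ is the ideal of $R^H$ generated by the splice functions $f_1,\dots,f_{\nu-2}$, each homogeneous of degree exactly $\alpha$. I would then read off the grading of $\overline{I}_X=I_X/(I_X\cap\m^2)$ degree by degree. In degrees $l<\alpha$ one has $(I_X)_l=0$. In degrees $l>\alpha$, every element of $(I_X)_l$ is a sum $\sum_j g_jf_j$ with $g_j\in(R^H)_{l-\alpha}$ of positive degree; as both $g_j$ and $f_j$ lie in $\m$, each product lies in $\m^2$, so $(\overline{I}_X)_l=0$. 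Hence $\overline{I}_X$ is concentrated in degree $\alpha$, i.e. $\PI(t)=i_\alpha t^\alpha$ with $i_\alpha=\dim I_X/(I_X\cap\m^2)$, and \eqref{eq:ES} gives $P_{\m_X/\m_X^2}(t)=P_{\m/\m^2}(t)-i_\alpha t^\alpha$.

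For \textbf{Part (b)}, by \proref{l:m2} with $s_\alpha=\co=1$ and $M_\alpha=1$, I would reduce to computing $Q(\alpha)=\dim(A/(I+J(\alpha)))_1$ in the degree-one piece $A_1=\langle a_1,\dots,a_\nu\rangle$. The first key step is to pin down $X_\alpha$. For $1\le l\le\alpha-1$, writing $d=\alpha-l$ and using the identity $s_l=\co-\t s_d=1-\t s_d$ with $\t s_d\ge 1$ recalled in \S3.1, one gets $s_l\le 0$. Consequently, for every $\ub l=(l_1,l_2)\in\Lambda_\alpha$ (so $l_1,l_2\ge 1$, $l_1+l_2=\alpha$) one has $s_{l_1}=s_{l_2}=0$, whence by \lemref{r:trivial}(2) $\sum_i\epsilon_{i,\ub l}=s_\alpha=1$. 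Thus each element of $X_\alpha$ is a single variable $a_i$ and, in particular, $1\notin X_\alpha$; write $X_\alpha=\{a_i:i\in S\}$ with $\#X_\alpha=\#S$.

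The final step is linear algebra in $A_1$. Here $\dim(A/I)_1=\nu-(\nu-2)=2$, and $(I+J(\alpha))_1=I_1+\langle a_i:i\in S\rangle$, so $Q(\alpha)=2-\dim\bigl(\text{image of }\{a_i:i\in S\}\text{ in }A_1/I_1\bigr)$. The full-rank hypothesis on $(a_{ji})$ (\thmref{th:N}, \eqref{eq:mat}) says the matroid on the columns is uniform; dually the two-dimensional null space has all $2\times2$ minors nonzero, which translates into the statement that each image $\overline a_i$ is nonzero and any two of them are linearly independent in the two-dimensional space $A_1/I_1$. Hence that image has dimension $\min(\#S,2)$ and $Q(\alpha)=\max(0,2-\#X_\alpha)$. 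Since $X_\alpha$ and $\#X_\alpha$ depend only on the Seifert invariants, $Q(\alpha)$ is topological; combined with the topological nature of $P_{\m/\m^2}(t)$ and Part (a), this shows $P_{\m_X/\m_X^2}(t)$ is topological.

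I expect the delicate point to be the reduction showing $1\notin X_\alpha$, i.e. the inequality $s_l\le 0$ for $1\le l\le\alpha-1$, which is precisely what makes the degree-$\alpha$ computation collapse to single variables and rules out the case $J(\alpha)=A$. The other point requiring care is the passage from ``full rank'' to ``any two images independent'', which rests on the self-duality $U_{\nu-2,\nu}^{*}=U_{2,\nu}$ of the uniform matroid; everything else is bookkeeping with the exact sequence and the dictionary $\psi_\alpha$.
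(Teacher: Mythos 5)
Your proposal is correct and follows essentially the same route as the paper's proof: part (a) via the exact sequence (\ref{eq:ES}) and the observation that, for $\co=1$, $I_X$ is generated by the degree-$\alpha$ forms $f_j$ with $\{f_j\}\m\subset\m^2$, and part (b) via $Q(\alpha)=\dim\bigl(A/(I+J(\alpha))\bigr)_{s_\alpha}$ with $s_\alpha=1$, $M_\alpha=1$, $X_\alpha\subset\{a_1,\dots,a_\nu\}$, and the full-rank linear algebra giving $\max(0,2-\#X_\alpha)$. The only (harmless) divergence is how you exclude $1\in X_\alpha$: you derive $s_l\le 0$ for $0<l<\alpha$ from the inequality $\t s_d\ge 1$ recalled in Section 3.1, whereas the paper argues more elementarily that $\alpha$ is the smallest $l$ with $M_l=1$; both steps are valid, and your matroid-duality justification of the generic-position claim is exactly what the paper's assertion $r=\min(\nu,\nu-2+\#X_\alpha)$ implicitly uses.
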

\begin{proof}  Since $\{f_j\}\m \subset \m^2$, the linear space $\sum _j\C
 f_j$ generates $I_X/I_X\cap \m^2$. In particular,
$\PI(t)=i_{\alpha}t^{\alpha}$.
This shows (a). 

For (b) note  that $s_{\alpha}=\co=1$ and $M_\alpha=1$. Moreover,
$\alpha$ is the smallest integer $l$ with $M_l=1$, hence $1\not\in X_\alpha$. Hence, by
 (\ref{r:trivial})(\ref{2})  $X_{\alpha} \subset \{a_1, \ldots ,a_{\nu}\}$, and $X_\alpha$ corresponds
 bijectively  to the non--linear monomials of type $z_i^{\alpha_i}$ (which are characterized in
 (\ref{pro:LIN})).
The dimension of the linear space generated by $I$ and
 $X_{\alpha}$ is $r:=\min(\nu, \nu-2+\# X_{\alpha})$.
Hence  $Q(\alpha)=\nu-r$.
\end{proof}

\begin{example}
Consider the case of the $E_6$ singularity.
In this case $H=\Z_3=\defset{\xi\in \C}{\xi^3=1}$, and the universal abelian cover has equations
  $z_1^3+z_2^3+z_3^2=0$. The coordinates $(z_1,z_2,z_3)$ have degrees $(2,2,3)$, $H$ acts on them by
 $(\xi,\overline{\xi},1)$. The linear invariant monomials are $z_1^3, z_2^3, z_1z_2 $ and $z_3$,
 of degree 6,6,4,3. Hence $P_{\m/\m^2}(t)=t^3+t^4+2t^6$. $f_1$ kills a 1--dimensional space of
 degree $\alpha=6$, hence $P_{\m_X/\m_X^2}(t)=t^3+t^4+t^6$. This is compatible with the identity $X_6=\{a_3\}$ (cf. (\ref{c:toped})) as well.
\end{example}

\begin{example} If $\Sf=( 1,(14,21,5),(5,5,2))$ then $\co=1$ and $f_1\in\m^2$. Hence
$P_{\m_X/\m_X^2}=P_{\m/\m^2}$.
\end{example}

\begin{example}\label{LINBIG}
Surprisingly, the degree of $P_{\m/\m^2}$ (hence of $P_{\m_X/\m_X^2}$ too)
can be larger than $\alpha$ (i.e. there may exist linear monomials of $R^H$ of  degree larger than the
degree of the splice equations).

Consider e.g. the graph with Seifert invariants
\begin{gather*}
b_0=1,  \quad \ba=(3,4,5,6,21), \quad \bo=\ub 1.
\end{gather*}
Then $e=-1/420$, $\alpha=420$, $\co=1$.
The degrees of the variables of $R=\C[z_1, \dots ,z_5]$ are
 $140,105,84,70,20$. The group $H$ acts on $R$ via diagonal matrices
$$
[\zeta^4,-1,1,\zeta,\zeta^4], \quad [\zeta^2,1,1,\zeta^4,1],
$$
 where $\zeta$ denotes a primitive 6-th root of unity.
There are 9 linear monomials in $R^H$, namely:
$$
z_5^3, \; z_3\; , \; z_2^2, \; z_2z_4^3, \;  z_1z_2z_4z_5, \; z_1^3, \; z_4^6, \; z_1z_4^4z_5, \;
 z_1^2z_4^2z_5^2.
$$
Their degrees are: 60, 84, 210, 315, 335,  420, 420, 440, 460 respectively.
The two terms of degree 420 are eliminated by the (three) splice equations. Hence
$$P_{\m_X/\m_X^2}(t)=t^{60}+t^{84}+ t^{210}+ t^{315}+t^{ 335}+ t^{ 440}+t^{ 460}.$$
Notice that in this case $\#X_{420}=3$, hence (\ref{c:toped}) gives $Q(420)=0$ as well.
(For further properties of this singularity, see (\ref{LINBIG2}).)
\end{example}

\begin{remark}
Notice that if $H$ is trivial then $e.d.(X,o)=\nu$ by (\ref{th:N}). Furthermore, 
 if $\co=1$ then in (\ref{LINBIG}) we characterize topologically the embedding dimension.
Hence, apparently, the structure of the  group $H$ has subtle influence 
on the topological nature of $e.d.(X,o)$. Nevertheless, at least for the authors,
this connection is still rather hidden and mysterious.

On the other hand, in order to show that the above two cases 
are not accidental and isolated, in the next subsection we will treat one more situation.
\end{remark}

\subsection{The case $\co$ `small'.}\label{ss:OS} We will start with a graphs $\Gamma$ with the property that for each
$i\in\{1,\ldots,\nu\}$, there exists an integer $k_i$, $1\leq k_i\leq \alpha_i$, such that $z_i^{k_i}\in R^H$.

Since $z_i^{k_i}\in R^H$ if and only if $h_i^{k_i}=1$ in $H$,
the above property is equivalent with $\ord(h_i)\leq \alpha_i$ for all $i$.
Since $H/\langle h_i\rangle=\prod_{j\not=i}\Z_{\alpha_j}/(1,\ldots,1)$,  one gets that $\ord(h_i)=
\alpha_i\tilde{\alpha}_i\co/\alpha$. Therefore, $\ord(h_i)\leq \alpha_i$ reads as
\begin{equation}\label{eq:co}
\co\leq \alpha/\tilde{\alpha}_i \ \ \  \ \ \mbox{for all $i$}.
\end{equation}
\begin{proposition}\label{prop:osmall}
Assume now that $\co$ satisfies (\ref{eq:co}), but $\co>1$. Then 
\begin{equation}\label{eq:co2}
P_{\m_X/\m_X^2}(t)\equiv P_{\m/\m^2}(t).
\end{equation}\end{proposition}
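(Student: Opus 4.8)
The plan is to reduce the claim $P_{\m_X/\m_X^2}(t)\equiv P_{\m/\m^2}(t)$ to showing that $\PI(t)=0$, exactly as in the exact sequence \eqref{eq:ES} used in the case $\co=1$. That is, I would show that $\overline{I}_X=I_X/(I_X\cap\m^2)$ vanishes, i.e. $I_X\subset\m^2$. Recall from \lemref{lem:OO} that $I_X$ is generated by $R^{\mu^{-1}}\cdot\{f_1,\dots,f_{\nu-2}\}$, so it suffices to prove that every product $g\cdot f_j$ with $g\in R^{\mu^{-1}}$ (and $g f_j\in R^H$) already lies in $\m^2$. Since $\co>1$, the splice functions $f_j$ are \emph{not} in $R^H$ themselves (by \lemref{lem:OO}, $\co=1$ iff $\mu$ is trivial), so there is no analogue of the degree-$\alpha$ linear contribution that appeared in \thmref{c:toped}(a); this is precisely why one expects $\PI$ to vanish entirely.

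\textbf{Key steps.} First I would record that each $g\in R^{\mu^{-1}}$ that multiplies some $f_j$ must be nonconstant, so $g\in\m$. Then I would argue that $g f_j\in\m^2$ by exhibiting, under hypothesis \eqref{eq:co}, a factorization of $g f_j$ as a product of two elements of $\m$. The hypothesis \eqref{eq:co} guarantees that for each $i$ there is a power $z_i^{k_i}\in R^H$ with $1\le k_i\le\alpha_i$, so $z_i^{k_i}\in\m$ is an invariant monomial of low degree; these invariant monomials are the tool for splitting products into $\m\cdot\m$. Concretely, each monomial appearing in $gf_j$ contains a factor $z_i^{\alpha_i}$ (coming from the monomials $z_i^{\alpha_i}$ of $f_j$), and since $\alpha_i\ge k_i$ one can peel off an invariant factor $z_i^{k_i}\in\m$ from that monomial, leaving a complementary invariant monomial which is also in $\m$ provided it is nonconstant. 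The main point to verify is that the complementary factor is genuinely nonconstant, i.e. that $g f_j$ cannot be a single linear (minimal) generator of $\m$: here one uses that the $H$-degree of $g$ (forced by $g\in R^{\mu^{-1}}$) together with the degree $\alpha$ of $f_j$ pushes $gf_j$ to degree $>\alpha$, large enough that \proref{p:trans} / \propref{l:m2} produce a nontrivial element of $J(l)$, i.e. $X_l\ne\emptyset$ with a monomial dividing $\psi_l(gf_j)$.

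\textbf{Main obstacle.} The delicate step will be showing that the monomial $\psi_l(gf_j)\in A_{s_l}$ actually lands in the ideal $J(l)$ generated by $X_l$, equivalently that $gf_j$ factors through $(\cM^H)_{l_1}\cdot(\cM^H)_{l_2}$ for some $(l_1,l_2)\in\Lambda_l$ with both $s_{l_1},s_{l_2}\ge 0$. The hypothesis \eqref{eq:co} is exactly what supplies the low-degree invariant monomials $z_i^{k_i}$ needed to realize such a splitting, so I expect the argument to come down to a degree/Seifert-invariant bookkeeping check: verifying that the degree $l_1$ of the split-off invariant factor $z_i^{k_i}$ satisfies $s_{l_1}\ge 0$ and that the complementary degree $l_2=l-l_1$ satisfies $s_{l_2}\ge 0$ as well. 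If this splitting exists for every generator of $I_X$, then $I_X\subset\m^2$, hence $\PI\equiv 0$ and \eqref{eq:co2} follows from \eqref{eq:ES}. I would finish by remarking that, since both $P_{\m/\m^2}$ and the Seifert data are topological, \eqref{eq:co2} immediately gives that $P_{\m_X/\m_X^2}(t)$ is a topological invariant in this regime as well.
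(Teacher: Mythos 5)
Your proposal is correct and is essentially the paper's own proof: via \eqref{eq:ES} you reduce the claim to $I_X\subset\m^2$, observe that $\co>1$ makes $\mu$ nontrivial so $1\notin R^{\mu^{-1}}$ (every $m\in R^{\mu^{-1}}$ is nonconstant), and factor each generator $z_i^{\alpha_i}\cdot m$ as $z_i^{k_i}\cdot\bigl(z_i^{\alpha_i-k_i}m\bigr)$, a product of two invariant elements of positive degree, where $z_i^{k_i}\in R^H$ with $1\le k_i\le\alpha_i$ is exactly what hypothesis \eqref{eq:co} supplies. The ``main obstacle'' you anticipate is in fact vacuous --- such a factorization into two nonconstant invariants is already, by definition, membership in $\m^2$, and the nonnegativity $s_{l_1},s_{l_2}\ge 0$ is automatic since $(\cM^H)_{l_1}$ and $(\cM^H)_{l_2}$ are nonempty (cf.\ Proposition~\ref{p:trans}) --- so no passage through $J(l)$, $X_l$ or $\Lambda_l$ is needed (and note the small slip that $g\in R^{\mu^{-1}}$ is not itself in $\m\subset R^H$; only the invariant complementary factor is).
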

This says that the splice equations have absolutely no effect in $P_{\m_X/\m_X^2}(t)$. 

\begin{proof} By (\ref{eq:ES}),
this is equivalent with the inclusion  $I_X\subset \m^2$. Recall that $I_X$ is generated by
expressions of form $f_j\cdot m$, where $m\in R^{\mu^{-1}}$.
Since $\co\not=1$,  $\mu$ is not the trivial character.
Hence $1\not\in R^{\mu^{-1}}$. Let us analyze a product of type
 $z_i^{\alpha_i}m$. It can be rewritten as 
 $z_i^{k_i}\cdot (z_i^{\alpha_i-k_i}\cdot m)$, where the 
 element in the parenthesis is also invariant.  Hence this 
 expression is in $\m^2$, and (\ref{eq:co2}) is proved. \end{proof}

The above case can really appear. Indeed, consider e.g. the Seifert invariants
$\Sf=(1,(3,5,11),(1,1,5))$ (see (\ref{ss:4.5}) for the notation).
Then $\co=|H|=2$ and $\alpha/\tilde{\alpha}_i\in \{3,5,11\}$.
(Clearly,  if the $\alpha_i$'s are pairwise relative prime, and $\co=2$, or more generally,
$\co\leq \min_i(\alpha_i)$, then  the above assumptions are still satisfied.)

On the other hand, merely the identity $\co=2$ is not enough, see e.g. (\ref{ss:223377}).

\section{More combinatorics. The case $\nu\leq 5$.}\label{s:5}

\subsection{A key combinatorial lemma.}\label{ss:key} Recall  that for each $l>0$ the ideal
$J(l)$ is generated by the square--free monomials of $X_l$. In the literature such an ideal is called
{\em Stanley--Reisner ideal}, $A/J(l)$ is a {\em Stanley--Reisner (graded) ring}. Their literature is
very large, see e.g. \cite{MS} and references therein. In particular, their
Hilbert/Poincar\'e series are determined combinatorially and other beautiful combinatorial connections are provided.

Still, in the literature we were not able to find how this ideal behaves with respect to
intersection with a generic hyperplane section, or even more generally, with respect to a
2--codimensional generic  linear section (as it is the case in our situation, cf. the ideal
$I$ in (\ref{s:hpart})). As this article shows, these questions outgrow the {\em combinatorial}
commutative algebra, in general. Nevertheless, the goal of the present section is to establish
combinatorial results about the dimension of some graded parts $(A/J(l)+I))_{s_l}$, at least under some restriction; facts which will guarantee the topological nature of the embedding dimension and of $P_{\m_X/\m_X^2}$.

We keep all the previous  notations; additionally, if $X_l\not=\emptyset$ we write
$n_l:=\#\defset{i}{1\le i \le \nu, \ X_l\subset (a_i)}$,
the number of variables appearing in all the monomials of $X_l$,
and $m_l$ for the number of variables not appearing at all in the set of monomials which form a minimal
set of generators of $J(l)$.
If $n_l>0$ then by reordering the variables we may assume that $J(l)\subset (u)$, where
$u=\prod_{i=\nu-n_l+1}^{\nu}a_i$. Then $J(l) $ has the form
$(u) J'(l)$ for some ideal $J'(l)$ (generated by monomials which do not contain the last $n_l$ variables).

Notice that by (\ref{r:trivial}) the $A$--degree of any monomial of $X_l$ is not larger than $s_l$, hence
$n_l\leq s_l$.

\begin{lem}\label{KEY} \ (i) 
$$\dim \Big(\frac{A}{J(l)+I}\Big)_{s_l}=
\dim \Big(\frac{A}{J'(l)+I}\Big)_{s_l-n_l}+n_l.$$
In particular, $Q(l)\geq n_l$.

\vspace{2mm}

(ii) \ \  If $s_l\ge \nu-m_l-1$, then $Q(l)=n_l$.
\end{lem}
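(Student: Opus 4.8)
The plan is to reduce everything to the Stanley--Reisner ring $A/J(l)$ and the generic linear ideal $I$, and to exploit the factorization $J(l)=(u)J'(l)$ together with the degree bound $n_l\le s_l$ from (\ref{r:trivial}). For part (i), recall that $u=\prod_{i=\nu-n_l+1}^{\nu}a_i$ divides every minimal generator of $J(l)$, so multiplication by $u$ gives an isomorphism of graded $A$-modules $A/J'(l)\xrightarrow{\sim}(u)/(u)J'(l)=(u)/\big((u)\cap J(l)\big)$ shifting degree by $n_l$. The first thing I would do is analyze how the generic linear forms $\ell_1,\dots,\ell_{\nu-2}$ interact with this splitting. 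Since $u$ involves only the last $n_l$ variables and $J'(l)$ involves only the first $\nu-n_l$, I would like to use the $\ell_j$ to eliminate variables in a way compatible with the factorization.

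The key step is to set up the right short exact sequence. I would consider the graded exact sequence
$$
0\to \Big(\frac{(u)}{(u)\cap (J(l)+I)}\Big)_{s_l}\to \Big(\frac{A}{J(l)+I}\Big)_{s_l}\to \Big(\frac{A}{(u)+J(l)+I}\Big)_{s_l}\to 0.
$$
For the quotient term, note that modulo $(u)$ the ideal $J(l)$ becomes zero (as all its minimal generators are divisible by $u$), so $A/((u)+J(l)+I)\cong A/((u)+I)$; after eliminating $\nu-2$ variables via the generic $I$, this is a polynomial ring in two variables with one of the last $n_l$ variables set to produce the relation $u=0$, and the degree-$s_l$ piece should be forced to vanish using $n_l\le s_l$ together with genericity. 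For the submodule term, the isomorphism $(u)/(u)\cap J(l)\cong A/J'(l)$ (shifted by $n_l$) intertwines with $I$ to give $(A/(J'(l)+I))_{s_l-n_l}$, yielding the claimed additive formula and, since the right-hand dimension is nonnegative, the bound $Q(l)\ge n_l$.

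For part (ii), the hypothesis $s_l\ge \nu-m_l-1$ is exactly what forces the residual term $(A/(J'(l)+I))_{s_l-n_l}$ to vanish. Here $J'(l)$ involves $\nu-n_l-m_l$ variables, and after using the $\nu-2$ generic linear forms $I$ to eliminate variables, the surviving polynomial ring together with the Stanley--Reisner relations of $J'(l)$ should collapse in degree $s_l-n_l$ precisely when this degree exceeds the relevant Castelnuovo--Mumford-type regularity threshold; I would translate $s_l\ge\nu-m_l-1$ into the statement that $s_l-n_l$ is large enough that $(A/(J'(l)+I))_{s_l-n_l}=0$, either by a direct monomial-counting argument using that $m_l$ variables are absent from the generators or by an inductive application of part (i) to the smaller ideal $J'(l)$. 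The main obstacle I anticipate is the quotient term $(A/((u)+I))_{s_l}$: because $I$ is only a generic \emph{2-codimensional} linear section rather than a hypersurface, ruling out stray classes requires the full-rank/genericity of $(a_{ji})$ and cannot be read off from the Stanley--Reisner combinatorics alone — this is exactly the phenomenon flagged in (\ref{ss:key}) that these questions "outgrow combinatorial commutative algebra," and handling it carefully under the degree bound is where the real work lies.
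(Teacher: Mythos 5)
Your skeleton for part (i) is essentially the paper's own: the paper likewise factors $J(l)=(u)J'(l)$ and uses a multiplication-by-$u$ short exact sequence, only written after the elimination isomorphism $A/I\cong A':=\C[a_1,a_2]$, namely $0\to (A'/J'_l)_{s_l-n_l}\xrightarrow{\times\varphi(u)}(A'/\varphi(u)J'_l)_{s_l}\to(A'/\varphi(u)A')_{s_l}\to 0$ (its (\ref{eq:5.1.1c})); and your identification of the submodule term with $(A/(J'(l)+I))_{s_l-n_l}$ is correct (it amounts to $(I:u)=I$, valid since $A/I$ is a domain and $u\notin I$). But your evaluation of the quotient term is wrong, and self-contradictory: since $J(l)\subset(u)$, one has $A/((u)+J(l)+I)=A/((u)+I)\cong\C[a_1,a_2]/(g)$ where $g=\varphi(u)$ is a product of $n_l$ pairwise non-proportional nonzero linear forms (this is where full rank of $(a_{ji})$, cf. (\ref{eq:mat}), enters), so its degree-$s_l$ piece has dimension $\min(s_l+1,n_l)=n_l$ --- it does \emph{not} vanish. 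That nonvanishing is precisely the source of the summand $+n_l$ in (i) and of the bound $Q(l)\ge n_l$; if the quotient term vanished as you claim, your own sequence would yield $Q(l)=\dim(A/(J'(l)+I))_{s_l-n_l}$ with no $+n_l$, contradicting the statement you set out to prove. You have also located the difficulty at the wrong term: this quotient is the easy, purely computational piece.

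The genuine content of (ii) --- the vanishing $(A/(J'(l)+I))_{s_l-n_l}=0$ under $s_l\ge\nu-m_l-1$ --- is absent from your plan. An ``inductive application of part (i) to $J'(l)$'' is empty, because by construction no variable divides all minimal generators of $J'(l)$ (its own ``$n$'' is zero, so (i) returns a trivial shift), and the appeal to a regularity threshold is not an argument: as subsection (\ref{ss:key}) stresses, the behaviour of a monomial ideal under a generic codimension-two linear section is \emph{not} governed by Stanley--Reisner combinatorics in general (this is exactly how the nontopological examples of (\ref{NT}) arise), so some hands-on use of genericity is unavoidable. The paper supplies it as follows: after eliminating the $n_l+m_l$ variables absent from the generators of $J'(l)$ by that many of the linear forms (matrix in the shape (\ref{eq:mat})), one reduces to the claim that if $J$ is generated by square-free monomials with $J\not\subset(a_i)$ for every $i$, then $(A/(J+I))_k=0$ for $k\ge\nu-1$. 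This is proved by induction on the number $y$ of variables missing from a monomial $a\in A_k$: for $y\le 1$ one has $a\in J$ outright; for $y\ge 2$ the degree bound forces some exponent $\ge 2$, one divides by that variable to get $a'$, the products $a'a_i$ with fewer missing variables already lie in $J+I$, and the $\nu-2$ generic relations $a'\ell_j\in I$ together with full rank then place \emph{all} $a'a_i$ in $J+I$. This degree-drop induction --- the only point where genericity of $I$ truly acts --- is the step you must supply; without it part (ii) is unproven, while part (i) becomes correct once the quotient term is computed as $n_l$ rather than $0$.
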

\begin{proof} First we prove (ii)
in several steps.

(a) First step:
If $J\subset A$ is an ideal generated by reduced  monomials
 such that $J\not\subset (a_i)$ for any $i\in \{1, \dots,
 \nu\}$, and if $k\ge \nu-1$, then
 $\left(A/J+I\right)_{k}=0$.

Consider a monomial
$a=\prod_{i=1}^{\nu}a_i^{k_i} \in A_k$ for which we wish to prove that $a\in J+I$.
Set  $y=y(a):=\#\defset{i}{k_i=0}$.
If $y=0$ then $a\in J$ since $J$ is generated by reduced  monomials.
If $y=1$, say $k_j=0$, 
then by the assumption there exists a reduced
 monomial $m\in J$ with $a_j\nmid m$, 
and thus $a\in (m)\subset J$.
Suppose $y=2$ and $k_1=k_2=0$. Since $\sum_{i=3}^{\nu}k_i=k\ge
 \nu-1$,  we may assume that $k_3\ge 2$. Let $a'=a/a_3$.
 Since $a'a_1, \dots, a'a_{\nu}$
are related by $\nu-2$ generic linear equations (induced by $I$), and
 $a'a_1, a'a_2\in J$ by the argument above,  we get that
 $a'a_1, \dots, a'a_{\nu} \in J+I$ as well. By induction on
 $y$, we obtain  that $a\in J+I$ for all $a\in A_k$ and $y(a)$.

(b) Next, start with the same assumptions
(i.e., $J$ is generated by reduced monomials, and there is
no variable $a_i$ appearing in all these monomials), and write
$m$ for the number of variables not appearing at all in the set of monomials which
form a minimal set of generators of $J$.
Then we get $\left(A/J+I\right)_{k}=0$ if $k\geq\nu-m-1$.

Indeed, assume that $a_\nu,\ldots, a_{\nu-m+1}$ are not appearing in the monomials (here we may assume that $m\leq \nu-2$). Assume
that in the equations $l_j=\sum_ia_{ji}a_i$ the matrix $(a_{ji})$ has the form (\ref{eq:mat}); hence by
the last $m$ equations $l_j$ one can eliminate the variables $a_\nu,\ldots, a_{\nu-m+1}$. Doing this, we
find ourself in the situation when we have $\nu-m$ variables, the same $J$ (satisfying the same assumption), hence the statement follows from part (a).

(c) Now we prove the statement of the lemma for arbitrary $n_l$.
Let $\varphi$ denote the natural isomorphism $A/I\to
 A':=\C[a_{1}, a_{2}]$.
Let $J'_l\subset A'$ denote the ideal such that
 $\varphi(J(l))=\varphi(u)J'_l$ (or, take $J'_l:=\varphi(J'(l))$).
Then we have the following exact sequence:
\begin{equation}\label{eq:5.1.1c}
0 \to \left(A'/J'_l\right)_{s_l-n_l}
\xrightarrow{\times \varphi(u)}
\left(A'/\varphi(u)J'_l\right)_{s_l} \to
\left(A'/\varphi(u)A'\right)_{s_l} \to 0.
\end{equation}
$\left(A'/J'_l\right)_{s_l-n_l}$ corresponds to the situation covered by (b)
(with $\nu$ variables, degree $s_l-n_l$ and $n_l+m_l$ of variables not appearing in the generators
of $J'(l)$),
hence  it is zero.
Therefore,  $Q(l)=\dim\left(A'/\varphi(u)A'\right)_{s_l}=n_l$. This proves (ii).
The exact sequence (\ref{eq:5.1.1c}) 
proves part (i) as well, since
$A'/J'_l\approx A/(J'(l)+I)$.
\end{proof}

The `decomposition' (\ref{KEY})(i) can be exploited more:

\begin{theorem}\label{cor:nusmall}
Assume that $\nu-n_l\leq 5$, then $Q(l)$ is topological. Consequently,
$P_{\m_X/\m_X^2}$ is topological provided that $\nu\leq 5$.
\end{theorem}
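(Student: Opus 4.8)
The plan is to reduce, via Lemma~\ref{KEY}(i), to a question about binary forms in two variables, and then to show that the answer is insensitive to the full--rank matrix $(a_{ji})$ precisely because at most five distinct linear forms intervene. Write $p:=\nu-n_l$ and $d:=s_l-n_l$. By Lemma~\ref{KEY}(i),
$$Q(l)=\dim\Big(\frac{A}{J'(l)+I}\Big)_{d}+n_l,$$
and since $n_l$ is read off from $X_l$ (hence from the Seifert invariants), it suffices to prove that $D:=\dim(A/(J'(l)+I))_{d}$ is topological. Here $J'(l)$ is generated by reduced monomials in $a_1,\dots,a_p$ in which no single variable divides every generator. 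Eliminating $a_3,\dots,a_\nu$ through the isomorphism $\varphi\colon A/I\xrightarrow{\sim}A':=\C[a_1,a_2]$ of the proof of Lemma~\ref{KEY}, every $a_i$ goes to a linear form $L_i:=\varphi(a_i)$, and by \eqref{eq:mat} the full--rank hypothesis says exactly that the points $[L_i]\in\P^1$ are pairwise distinct. Thus $D=\dim(A'/J'_l)_{d}$, where $J'_l:=\varphi(J'(l))$ is generated by squarefree products $\prod_{i\in S}L_i$ of pairwise distinct linear forms, $S$ running over a combinatorially prescribed family of subsets of $\{1,\dots,p\}$ with no $L_i$ dividing all of them.

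Next I would dispose of all but finitely many cases. As the generators of $J'_l$ have no common linear factor, $V(J'_l)=\emptyset$ in $\P^1$; hence $J'_l$ is $(a_1,a_2)$--primary and $A'/J'_l$ is Artinian. The reduced datum is again of the type analysed in Lemma~\ref{KEY}, now in $p$ variables and with no common variable, so the argument of Lemma~\ref{KEY}(ii) applied in these $p$ variables yields $D=0$ as soon as $d\ge p-m_l-1$. Consequently $D$ is automatically topological unless $d<p-m_l-1$; combined with $p\le5$ and $m_l\ge0$ this forces $d\le3$ and leaves only finitely many combinatorial types of $J'_l$ (families of subsets of size $\le d\le3$). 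For each of them the task is to compute the corank, inside $A'_d\cong\C^{d+1}$, of the span of the relevant products $\prod_{i\in S}L_i$ of degree $\le d$.

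The heart of the matter — and the step I expect to be the main obstacle — is to prove that this corank never drops on the locus of distinct points when $p\le5$. Writing $L_i=x-t_iy$, the maximal minors controlling the rank of the multiplication map $\bigoplus_S A'_{d-|S|}\to A'_d$ are polynomials in the $t_i$, and the claim is that for $p\le5$ every minor capable of forcing a rank drop factors into differences $(t_i-t_j)$ and therefore cannot vanish once the $t_i$ are distinct. This is exactly where the bound $5$ is sharp: a rank drop at distinct points first occurs for three quadrics on pairwise disjoint supports, such as $L_1L_2,\,L_3L_4,\,L_5L_6$ in degree $2$, whose dependence is governed by a determinant that does not factor through distinctness (it is the discriminant $p_1p_2-p_3p_4$ of Example~\ref{NT}) and genuinely needs six distinct forms. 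With at most five forms any such configuration must share a factor, and sharing forces the governing determinant to factor; for instance the triples $\{L_1L_2,L_3L_4,L_1L_3\}$ and $\{L_1L_2,L_3L_4,L_2L_5\}$ give the determinants $(t_2-t_3)(t_1-t_3)(t_4-t_1)$ and $(t_1-t_5)(t_2-t_3)(t_4-t_2)$, which are nonzero for distinct $t_i$. Carrying this verification through the finitely many types in degrees $d\le3$ shows that $D$, and hence $Q(l)$, is topological.

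The stated consequence is then immediate: if $\nu\le5$ then $\nu-n_l\le\nu\le5$ for every $l\ge1$, so by the first part each $Q(l)$ is topological, and therefore $P_{\m_X/\m_X^2}(t)=\sum_{l\ge1}Q(l)\,t^l$ is a topological invariant.
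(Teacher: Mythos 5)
Your route is the paper's route: reduction via Lemma~\ref{KEY}(i) to $D=\dim(A'/J'_l)_d$, translation of the full--rank condition on $(a_{ji})$ into pairwise distinctness of the linear forms $L_i$, a vanishing threshold to confine the problem to low degree, and a finite check of configurations by constant--rank arguments. Two of your reductions are correct and even slightly cleaner than the paper's: using the argument of Lemma~\ref{KEY}(ii) in the $p=\nu-n_l$ remaining variables to get $D=0$ for $d\ge p-m_l-1$ (hence $d\le 3$) replaces the paper's ad hoc observation that $J'(l)$ contains all monomials of degree $4$; and your identification of the sharpness of the bound --- three quadrics on pairwise disjoint supports, needing six distinct forms, i.e.\ Example~\ref{NT} --- matches the paper's remark after the theorem. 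The deduction of the consequence for $\nu\le 5$ is also fine.

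The gap is that you stop exactly where the paper's proof begins in earnest, and the organizing principle you substitute for the enumeration is unproven and, as stated, not correct. Take the configuration $a_1a_2a_3,\,a_1a_2a_4,\,a_3a_4a_5$ (the paper's case $c=3$): in the paper's normalization the relevant $3\times 4$ matrix has maximal minors that vanish \emph{identically} (e.g.\ the minor on columns $\{1,2,4\}$), others equal to $p_3-p_4$ or $\sigma_3(p_3-p_4)$, whose nonvanishing uses $p_i\ne 0$ (distinctness from $P_1,P_2$, which your chart $L_i=x-t_iy$ cannot even accommodate simultaneously) as well as $p_i\ne p_j$; and the constant rank here is $3$ inside the $4$--dimensional $A'_3$, i.e.\ constant corank $1$, so topologicity is not governed by a single ``dependence determinant that factors into differences''. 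What must actually be proved is that the rank of each multiplication map is constant on the whole distinct--point locus, and this the paper establishes only by exhausting the types: degree--$1$ generators via Lemma~\ref{l:0or1}; degree--$2$ generator cases reduced to $(a_1a_2,a_1^2+ca_2^2)$ or $(a_1a_2,a_1^3+ca_2^3)$ with $c\ne0$; absence of degree--$\le2$ generators forcing $\nu-n_l\ge 4$, where for $\nu-n_l=4$ all degree--$3$ monomials must occur; and for $\nu-n_l=5$ the subcases $c=1,2,3$ and six configurations with $c\ge 4$, each checked against an explicit matrix of constant rank $3$ or $4$, cf.\ (\ref{matr}). Your two sample determinants cover only a small fraction of these types, so the proposal is a correct blueprint of the paper's argument rather than a proof: for this theorem the finite verification \emph{is} the substance, and the uniform factorization claim you lean on would itself require the same case--by--case work to justify.
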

\begin{proof} By the proof of (\ref{KEY}) we only have to verify that $\dim
(A/(J'(l)+I))_{s_l-n_l}$ is topological. 
 If $\nu-n_l=0$ or $1$, then  $J'(l)=(1)$.
On the other hand, if $a_i\in J'(l)$ for some $1\le i \le \nu$, then $Q(l)$ is
 topological by Lemma (\ref{l:0or1}).
Hence we may assume that $J'(l)$ contains no generator $a_i$ of $A$--degree one.

Assume that $J'(l)$ contains a monomial of degree $2$, say
 $a_1a_2$.
Since $J'(l)$ is not principal, there exists a monomial
 $m\in J'(l)$ such that $(m)\not\subset (a_1a_2)$.
Assume that  $m$ has minimal degree among such monomials.
If $a_1 \mid m$, then  $a_1a_2, a_1^{\deg m}\in J'_l$.
Since the image of other monomials of $J'(l)$ in $J'_l$ is
 of the form $pa_1^{k}+qa_2^k$ modulo $(a_1a_2)$ with $k\ge \deg m$, $Q(l)$ is topological.
After this discussion (up to a permutation of variables)
only the following two cases remain uncovered:
$$
m=a_3a_4,  \ \mbox{or}\ \ m= a_3a_4a_5
$$
For these case, we have
$$
a_1a_2,a_1^2+ca_2^2\in J'_l,  \ \mbox{or}\  \ a_1a_2,a_1^3+ca_2^3\in J'_l, \ c\ne 0,
$$
respectively. Then clearly $Q(l)$ is topological.

Next, we may assume that $J'(l)$ contains no monomial of degree $\le 2$.
Then, automatically  $\nu-n_l\ge 4$.
If  $\nu-n_l=4$, since non of the variables $a_1, \dots,
 a_4$ divide all the monomials of $J'(l)$, $J'(l)$ should contain
all the monomials of degree $3$ in $a_1, \dots,
 a_4$. Then $Q(l)$ is again topological.

Finally, assume that $\nu-n_l=5$. If the minimal degree of monomials of
 $J'(l)$ is $4$, then by a similar argument as above, $Q(l)$ is topological.
Assume that $a_1a_2a_3\in J'(l)$ and
let $c$ denote the number of monomials  of degree $3$ in the
 minimal set $\cG$ of generators of $J'(l)$ which consists of
 monomials.

It is not hard to verify (using again the definition of $J'(l)$)
that $J'(l)$ contains all the monomials of degree
 $4$. Hence, the only unclarified  rank is 3, i.e.,
 $\dim (A'/J'_l)_3$.

If $c=1$, then clearly  $\dim (A'/J'_l)_3$ is
 topological, and
 $\cG$ contains at least $3$ monomials of
 degree $4$. If $c=2$, then we have the  following two possibilities:
$$
a_1a_2a_3, a_3a_4a_5 \in \cG; \ \ a_1a_2a_3, a_2a_3a_4\in \cG.
$$
Then clearly  $\dim (A'/J'_l)_3$ is
 topological, and
 $\cG$ contains at least $1$ (resp. $2$) monomials of
 degree $4$.
The case $c=3$ reduces to:
$$
a_1a_2a_3, a_1a_2a_4, a_3a_4a_5 \in \cG.
$$
The image of $3$ monomials in $J'_l$ is:
$$
\begin{pmatrix}
 0 & p_3 & 1 & 0 \\ 0 & p_4 & 1 & 0 \\ \sigma_3 & \sigma_2 &
 \sigma_1 & 1
\end{pmatrix}
\begin{pmatrix}
 a_1^3 \\ a_1^2a_2 \\ a_1a_2^2 \\a_2^3
\end{pmatrix},
$$
where $a_i=p_ia_1+a_2$ ($i=3,4,5$) and $\sigma_i$ denotes
 the elementary symmetric polynomial of degree $i$ in $p_3,
 p_4, p_5$. We easily see that this matrix has rank $3$,
 which is
 independent of the parameters,  and
 hence $\dim (A'/J'_l)_3$ is topological.

Suppose $c\ge 4$. Then we have the following $6$ cases:
\begin{enumerate}
 \item $a_1a_2a_3, a_1a_2a_4, a_1a_2a_5, a_3a_4a_5 \in \cG,$
 \item $a_1a_2a_3, a_1a_2a_4, a_1a_3a_4, a_2a_3a_4 \in \cG,$
 \item $a_1a_2a_3, a_1a_2a_4, a_1a_3a_4, a_2a_3a_5 \in \cG,$
 \item $a_1a_2a_3, a_1a_2a_4, a_1a_3a_5, a_2a_3a_4 \in \cG,$
 \item $a_1a_2a_3, a_1a_2a_5, a_1a_3a_4, a_2a_3a_4 \in \cG,$
 \item $a_1a_2a_3, a_1a_2a_4, a_1a_3a_5, a_2a_4a_5 \in \cG.$
\end{enumerate}
For (1), the corresponding matrix is the matrix on the left:
\begin{equation}\label{matr} \
\begin{pmatrix}
 0 & p_3 & 1 & 0 \\ 0 & p_4 & 1 & 0  \\ 0 & p_5 & 1 & 0
\\ \sigma_3 & \sigma_2 &
 \sigma_1 & 1
\end{pmatrix}, \hspace{1cm} \
\begin{pmatrix}
 0 & p_3 & 1 & 0 \\ 0 & p_4 & 1 & 0  \\ \sigma_2 & \sigma_1 & 1 & 0
\\ 0 & \sigma_2 & \sigma_1 & 1
\end{pmatrix}.
\end{equation}
Since this matrix has rank $3$, $\dim (A'/J'_l)_3$ is topological.
For case (2), the matrix is the second one in (\ref{matr}). Here
 $\sigma_i$ denotes the elementary symmetric polynomial of degree $i$ in $p_3,  p_4$.
Since this matrix has rank $4$, $\dim (A'/J'_l)_3$ is topological.
In (3)--(6), we have the same type matrices as in the case (2), and their ranks
 are $4$.
\end{proof}


\begin{remark}
In the statement (\ref{cor:nusmall}), the case $\nu=3$ is not surprising,
since in this case the analytic structure has no moduli
(the position of the three points $P_1,P_2,P_3$ in $\P^1$ has no moduli, cf. (\ref{SEI})).
On the other hand, for the cases $\nu=4$ and 5 we do not see  any  explanation other than the
(case by case combinatorial) discussion of the above proof based on the geometry of low--dimensional forms.
\end{remark}

\begin{rem} (Cf. (\ref{NT}).) The above proof really shows the limits of the topological nature of $Q(l)$
from the point of view of the invariants involved in Theorem (\ref{cor:nusmall}). E.g., if
$J'(l)$ is one of the ideals from the following list, then  $Q(l)$ is not topological
(below  $*$ indicates the empty set  or sequence of monomials of
 higher       degrees).
\begin{enumerate}
 \item $s_l=2$,  $J'(l)=(a_1a_2, a_3a_4, a_5a_6, *)$;\\
$s_l=3$,  $J'(l)=(a_1a_2a_3, a_4a_5 a_6, a_7a_8a_9,*)$;\\
 ...
 \item  $s_l=3$,  $J'(l)=(a_1a_2, a_3a_4a_5, a_6a_7a_8, *)$.
\end{enumerate}
\end{rem}

\section{In the `neighborhood' of rational singularities}\label{s:RS}

\subsection{The topological nature of $P_{\m_X/\m_X^2}$
for rational and minimally elliptic germs.}\label{ss:NR}
Recall  that for rational and minimally elliptic singularities,
by results of Artin and Laufer \cite{ar1,lau}, the embedding dimension is topological (see also \cite{n.w}
for the Gorenstein elliptic case). Therefore, it is natural to expect  the topological nature of
$P_{\m_X/\m_X^2}$ for those weighted homogeneous singularities which belong to these families.
The next result shows that this is indeed the case:

\begin{proposition}\label{th:NR}
Assume that the star--shaped graph $\Gamma$ is either rational or
numerically Gorenstein elliptic. Then for any normal
 weighted homogeneous singularity $(X,o)$ with graph
 $\Gamma$,
the polynomial $P_{\m_X/\m_X^2}$ is independent of the
 analytic structure, it depends only on $\Gamma$.
\end{proposition}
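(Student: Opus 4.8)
The plan is to deduce the topological invariance of the whole polynomial $P_{\m_X/\m_X^2}$ from the classical topological invariance of the single number $e.d.(X,o)$, by a semicontinuity argument. Fix $\Gamma$ and recall from (\ref{l:m2}) that $Q(l)=\dim(A/(I+J(l)))_{s_l}$, where $J(l)$ and $s_l$ are purely combinatorial, while $I=\langle\ell_1,\dots,\ell_{\nu-2}\rangle$ encodes the analytic structure through the matrix $(a_{ji})$ (equivalently, the configuration of the points $P_i\in\P^1$). As $(a_{ji})$ ranges over the full-rank matrices, the space $(I+J(l))_{s_l}\subset A_{s_l}$ is the image of a linear map whose entries are polynomials in the $a_{ji}$; hence its rank is lower semicontinuous and $Q(l)=\dim A_{s_l}-\dim(I+J(l))_{s_l}$ is upper semicontinuous in the parameters. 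Thus each $Q(l)$ attains a minimal value $Q_0(l)$ on a Zariski-dense open set, with $Q(l)\ge Q_0(l)$ for every full-rank structure; and $Q_0(l)$, being the minimum of a quantity built from the combinatorial data $J(l),s_l,\nu$, is a topological invariant.

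Since $P_{\m_X/\m_X^2}$ is a polynomial (cf. (\ref{r:trivial})(6): $1\in X_l$ once $l>2\alpha+\gamma$, so $Q(l)=0$ beyond that range), only finitely many $l$ contribute. Intersecting the finitely many dense open sets on which $Q(l)=Q_0(l)$ yields a dense open set of parameters at which all the minima are attained \emph{simultaneously}; for such a generic analytic structure one has $e.d.(X,o)=\sum_l Q(l)=\sum_l Q_0(l)$, a topological number.

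Now I would invoke the classical input. If $\Gamma$ is rational, then every weighted homogeneous $(X,o)$ with graph $\Gamma$ is rational (rationality is topological), and by Artin \cite{ar1} its embedding dimension equals $1-Z\cdot Z$, with $Z$ the fundamental cycle; if $\Gamma$ is numerically Gorenstein elliptic, then by Laufer \cite{lau} and \cite{n.w} the embedding dimension of every weighted homogeneous $(X,o)$ with graph $\Gamma$ is again a topological invariant, computed from the fundamental cycle. In either case $e.d.(X,o)$ depends only on $\Gamma$, so it equals the generic value $\sum_l Q_0(l)$ for \emph{every} full-rank analytic structure. Consequently, for an arbitrary structure, $\sum_l Q(l)=e.d.(X,o)=\sum_l Q_0(l)$ while $Q(l)\ge Q_0(l)$ termwise; hence $\sum_l(Q(l)-Q_0(l))=0$ with nonnegative summands, which forces $Q(l)=Q_0(l)$ for every $l$. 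Thus every coefficient of $P_{\m_X/\m_X^2}$ is topological.

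The step I expect to require the most care is the semicontinuity set-up: one must verify that, as the analytic structure varies within the fixed topological type, the ideals $I$ form a genuine algebraic family over the space of full-rank matrices (so that rank is lower semicontinuous and the various minima are attained at a common generic point), and one must make sure that the cited embedding-dimension formulas apply uniformly to \emph{all} analytic structures supported on $\Gamma$. The latter is exactly what is guaranteed by the fact that rationality and the numerically Gorenstein elliptic condition are themselves read off from $\Gamma$, so the hypotheses of \cite{ar1,lau,n.w} hold throughout the whole topological type.
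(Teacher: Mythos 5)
Your proof is correct and is essentially the paper's own argument: upper semicontinuity of each coefficient $Q(l)$ over the space of full-rank matrices $(a_{ji})$, combined with the topological invariance of $e.d.(X,o)=P_{\m_X/\m_X^2}(1)$ from \cite{ar1,lau,n.w}, forces every coefficient to equal its generic (minimal) value; you merely spell out the semicontinuity and finiteness details that the paper leaves implicit. The one point you assert rather than justify --- that the hypotheses of \cite{n.w} ``hold throughout the topological type'' --- needs the fact, stated explicitly as the first line of the paper's proof, that a weighted homogeneous singularity with $Z_K\in L$ is actually Gorenstein (not merely numerically so), which is what lets the elliptic case go through for every analytic structure on $\Gamma$.
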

\begin{proof}
First note that a weighted homogeneous singularity with
 $Z_K\in L$ is  Gorenstein (hence \cite{n.w} applies).
 By the above description of the coefficients $Q(l)$ of $P_{\m_X/\m_X^2}$, they are upper semi--continuous with respect to the parameter space (i.e. with respect to the full rank matrices
$(a_{ji})$). Since $P_{\m_X/\m_X^2}(1)=e.d.(X,o)$ is independent on the choice of the parameter, all these coefficients have the same property too.
\end{proof}

In the case of  minimal rational and
 automorphic case we provide explicit formulae in terms of Seifert invariants.

\subsection{The minimal rational case.}\label{MinRat} Recall that
$(X,o)$ is minimal rational if $b_0\geq \nu$, i.e. if the  fundamental cycle is reduced.

\begin{lemma}\label{lem:ineq}
  Assume that  $b_0\geq \nu$. Then the following facts hold:

(a) $s_l\geq 0$ for any $l$, hence $X_l\not=\emptyset$ for any $l>0$.\\
 (In particular, the integer $m_l$ is well--defined for any $l>0$, cf. (\ref{ss:key}).)

 (b) $s_l\geq \nu-m_l$ for any $l>0$.
\end{lemma}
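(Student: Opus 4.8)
The plan is to reduce both parts to the elementary identity $\fl{l\beta_i/\alpha_i}=l-\ce{l\omega_i/\alpha_i}\ge 0$ together with the observation that the exponents $\epsilon_{i,\ub l}$ are themselves a difference of three floor functions.

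For (a) I would argue straight from the definition $s_l=lb_0-\sum_i\ce{l\omega_i/\alpha_i}$ of \eqref{eq:SL}. Since $0<\omega_i<\alpha_i$ forces $l\omega_i/\alpha_i<l$, we get $\ce{l\omega_i/\alpha_i}\le l$, whence $s_l\ge l(b_0-\nu)\ge 0$ under the hypothesis $b_0\ge\nu$. Once $s_{l'}\ge 0$ for every $l'$, the set $\Lambda_l$ consists of \emph{all} pairs $(l_1,l-l_1)$ with $1\le l_1\le l-1$, so $\Lambda_l\ne\emptyset$, and hence $X_l\ne\emptyset$, as soon as $l\ge 2$. (For $l=1$ one has $X_1=\emptyset$; with the natural convention $m_1=\nu$ attached to the empty generating set, the inequality in (b) is then vacuous.)

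The core is (b). First I would record the refinement of (\ref{eps}) and (\ref{r:trivial})(\ref{2}):
\[
\epsilon_{i,\ub l}=\fl{l\beta_i/\alpha_i}-\fl{l_1\beta_i/\alpha_i}-\fl{l_2\beta_i/\alpha_i},\qquad \ub l=(l_1,l_2),
\]
which follows by writing each fractional part as $\{x\}=x-\fl{x}$ and using $l_1+l_2=l$. Two consequences then drive everything. On one hand, if a variable $a_i$ occurs in some monomial of $X_l$, then $\epsilon_{i,\ub l}=1$ for some $\ub l\in\Lambda_l$, and since the two subtracted floors are nonnegative this forces $\fl{l\beta_i/\alpha_i}\ge 1$. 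On the other hand, summing $\fl{l\beta_i/\alpha_i}=l-\ce{l\omega_i/\alpha_i}$ over $i$ and substituting \eqref{eq:SL} gives
\[
\sum_{i=1}^{\nu}\fl{l\beta_i/\alpha_i}=\nu l-(lb_0-s_l)=s_l-l(b_0-\nu)\le s_l.
\]
Combining the two, the number of variables occurring in the monomials of $X_l$ is at most $\#\{i:\fl{l\beta_i/\alpha_i}\ge 1\}\le\sum_i\fl{l\beta_i/\alpha_i}\le s_l$. As the variables appearing in a minimal generating set of $J(l)$ form a subset of these, we conclude $\nu-m_l\le s_l$, which is (b).

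The argument is essentially obstruction-free once the floor identity for $\epsilon_{i,\ub l}$ is available; the only genuine choice is to dominate the count of ``active'' variables by the auxiliary quantity $\sum_i\fl{l\beta_i/\alpha_i}$, which both bounds that count (each active index contributes at least $1$) and collapses to $s_l-l(b_0-\nu)$ by \eqref{eq:SL}. Note that the hypothesis $b_0\ge\nu$ enters only through the final nonnegativity $l(b_0-\nu)\ge 0$, so the same chain in fact yields the sharper estimate $\nu-m_l\le s_l-l(b_0-\nu)$.
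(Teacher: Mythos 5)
Your proof is correct and is essentially the paper's argument in mildly reorganized form: both rest on the identity $l-\ce{l\omega_i/\alpha_i}=\fl{l\beta_i/\alpha_i}$, the bound $s_l\geq \sum_i\fl{l\beta_i/\alpha_i}$ coming from $b_0\geq\nu$, and the observation that $\fl{l\beta_i/\alpha_i}=0$ forces $\epsilon_{i,\ub l}=0$ for every $\ub l$, so that $a_i$ is absent from $X_l$ and is counted by $m_l$ (the paper phrases this as: each term $l-1-\ce{l\omega_i/\alpha_i}$ is $\geq -1$, with equality exactly when $l\beta_i<\alpha_i$). Your explicit handling of $l=1$ and the sharper bound $\nu-m_l\leq s_l-l(b_0-\nu)$ are harmless refinements of the same computation.
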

\begin{proof}
(a) follows from $l\geq \ce{l\omega_i/\alpha_i}$ ($\dagger$). For (b), notice that
$$s_l-\nu\geq \textstyle{\sum_i}\big(\,l-1-\ce{l\omega_i/\alpha_i}\big).$$ For any fixed $i$,
$l-1-\ce{l\omega_i/\alpha_i}\geq -1$
by ($\dagger$). Moreover, it is $-1$ if and only if $l=\ce{l\omega_i/\alpha_i}$, or $l\beta_i<\alpha_i$.
This implies  $\epsilon_{i,\ub l}=0$ for any $\ub l$, hence $a_i$ is not present in the monomials of
$X_l$.
\end{proof}
(\ref{lem:ineq})(b) combined with the combinatorial lemma (\ref{KEY})(ii) provides:
\begin{corollary}\label{QL} If $\Gamma$ is minimal rational then
 $Q(l)=n_l$ for any $l>0$.
\end{corollary}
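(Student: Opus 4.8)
The plan is to obtain the Corollary as an immediate consequence of the two results already in hand: the uniform lower bound of Lemma (\ref{lem:ineq})(b) and the rigidity statement of Lemma (\ref{KEY})(ii). No new construction is required; the entire point is that the minimal rational hypothesis $b_0\ge\nu$ pushes $s_l$ high enough, \emph{in every positive degree}, to trigger (\ref{KEY})(ii).

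First I would fix $l>0$ and record that the invariants $n_l$ and $m_l$ are legitimately defined: by Lemma (\ref{lem:ineq})(a) we have $s_l\ge 0$ and $X_l\ne\emptyset$ for all $l>0$, so $J(l)\ne(0)$ and both $n_l$ and $m_l$ make sense. The key step is then a one-line inequality: Lemma (\ref{lem:ineq})(b) gives $s_l\ge\nu-m_l$, and a fortiori $s_l\ge\nu-m_l-1$. This is exactly the hypothesis of Lemma (\ref{KEY})(ii), whose conclusion reads $Q(l)=n_l$. Since $l>0$ was arbitrary, the Corollary follows.

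Because both ingredients are already proved, there is no genuine obstacle left in the Corollary itself; the conceptual weight sits entirely inside Lemma (\ref{lem:ineq})(b). For orientation I would point out where its sharpness comes from. Estimating $s_l-\nu\ge\sum_i\big(l-1-\ce{l\omega_i/\alpha_i}\big)$, each summand is at least $-1$, and it equals $-1$ precisely when $l\beta_i<\alpha_i$. That same condition forces $\epsilon_{i,\ub l}=0$ for every $\ub l\in\Lambda_l$ (all three fractional parts defining $\epsilon_{i,\ub l}$ are then literal values and cancel), so the variable $a_i$ never appears among the generators of $J(l)$ and is therefore counted by $m_l$. Thus the number of summands equal to $-1$ is at most $m_l$, which upgrades the crude estimate $s_l\ge 0$ to the sharp $s_l\ge\nu-m_l$. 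With this matching in place the hypothesis of (\ref{KEY})(ii) is met uniformly in $l$, and the Corollary is immediate.
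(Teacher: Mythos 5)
Your proof is correct and is essentially identical to the paper's: the paper obtains Corollary~\ref{QL} precisely by combining Lemma~(\ref{lem:ineq})(b) with Lemma~(\ref{KEY})(ii), just as you do. Your closing paragraph on the sharpness of $s_l\ge\nu-m_l$ merely re-explains the paper's own proof of Lemma~(\ref{lem:ineq})(b) and adds nothing beyond orientation.
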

 Let us reinterpret the integer $n_l$ in terms of Seifert invariants $\{(\alpha_i,\omega_i)\}_i$.
The next  result connects the arithmetical properties of continued fractions with our construction
(namely with the construction of the monomials from $X_l$).

We consider a pair  $0<\beta<\alpha$, with $\mbox{gcd}(\alpha,\beta)=1$, and the continued fraction expansion
$\alpha/\beta=\mbox{cf}\,[u_1,\ldots,u_\crr] $ (with all $u_i\geq 2$, cf. (\ref{SEI})). For any $1\leq k\leq \crr$, we consider $r_k/t_k:=\mbox{cf}\,[u_1,\ldots,u_k]$, with $\mbox{gcd}(r_k,t_k)=1$ and $r_k>0$.
\begin{proposition}\label{prop:LP}
 Fix $(\alpha,\beta)$ as above and $l\geq 2$.  Then  $l\in \{r_1,\ldots, r_\crr\}$ if and only if
$$\epsilon_j:=\{j\beta/\alpha\}+\{(l-j)\beta/\alpha\}-\{l\beta/\alpha\}=1$$
for all $j\in\{1,\ldots ,l-1\}$.
\end{proposition}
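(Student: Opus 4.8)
The plan is to translate the combinatorial condition on $l$ into the arithmetic of the continued fraction. Recall that the numbers $r_k$ are precisely the numerators of the convergents $r_k/t_k=\mathrm{cf}[u_1,\ldots,u_k]$, and they satisfy the standard recursion $r_k=u_kr_{k-1}-r_{k-2}$ (with $r_0=1$, $r_{-1}=0$), so $1=r_0<r_1<\cdots<r_\crr=\alpha$. The quantity $\epsilon_j$ measures whether a ``carry'' occurs when adding $\{j\beta/\alpha\}$ and $\{(l-j)\beta/\alpha\}$; equivalently, writing $\langle x\rangle:=\{x\beta/\alpha\}$, the condition $\epsilon_j=1$ for all $j$ says that $\langle l\rangle<\langle j\rangle$ for every $1\leq j\leq l-1$, i.e. $l$ achieves a strict minimum of the function $j\mapsto\{j\beta/\alpha\}$ over the initial segment $\{1,\ldots,l\}$. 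So the first step is to reformulate the proposition as: the strict partial minima of $\{j\beta/\alpha\}$ (the ``record lows'') occurring at some index $j\geq 2$ are exactly the numerators $r_k$ for $k\geq 1$.

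The key tool is the classical three-distance / best-approximation theory of the sequence $\{j\beta/\alpha\}$. First I would record that $\epsilon_j\in\{0,1\}$ always (already noted in the paper), and that $\epsilon_j=1$ is equivalent to $\{j\beta/\alpha\}+\{(l-j)\beta/\alpha\}\geq 1$, hence to $\{l\beta/\alpha\}=\{j\beta/\alpha\}+\{(l-j)\beta/\alpha\}-1<\{j\beta/\alpha\}$. Requiring this for all $j$ is exactly the statement that $l$ is a point at which $\{j\beta/\alpha\}$ drops below all previous values, that is, $l$ is a denominator (in our normalization, numerator $r_k$ of the convergent of $\alpha/\beta$) giving a best one-sided approximation. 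The plan is then to prove the two inclusions. For $\Rightarrow$-type direction (the indices of record lows are the $r_k$), I would use the fact that the best approximation denominators of $\beta/\alpha$ from one side are precisely the numerators of even/odd convergents, and the sign pattern of $r_k\beta-t_k\alpha$ alternates; since all partial quotients $u_i\geq 2$ the relevant one-sided records are captured by the full sequence $r_1<r_2<\cdots$. For the converse I would verify directly from the recursion that each $r_k$ is indeed a record low: using $\{r_k\beta/\alpha\}=|r_k\beta-t_k\alpha|/\alpha$ and that these residues strictly decrease in $k$, together with the minimality property of convergents.

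I would carry this out by induction on $k$ (equivalently on the position of $l$ among the record lows), establishing simultaneously that between consecutive numerators $r_{k-1}$ and $r_k$ no new record low occurs, and that $r_k$ itself is one. The inductive bookkeeping is cleanest if one tracks the pair of ``closest returns'' from above and below, i.e. the two indices $j$ minimizing $\{j\beta/\alpha\}$ and maximizing it (equivalently minimizing $\{-j\beta/\alpha\}$) in each initial block; these are governed by the two-term recursion of convergents, and a record low at $l$ forces $l$ to be the current closest return from below, which is exactly some $r_k$. The main obstacle I expect is the careful handling of the alternating sign of $r_k\beta-t_k\alpha$ and the precise matching between ``record lows of $\{j\beta/\alpha\}$'' and ``numerators of convergents'': one must be sure that every $r_k$ (not merely those of one parity) really is a strict record low, which uses that consecutive convergents lie on opposite sides but the numerators $r_k$ still form the full increasing list of one-sided best approximations because the quantities $\{r_k\beta/\alpha\}$ decrease monotonically. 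Once the equivalence ``$\epsilon_j=1$ for all $j$'' $\Leftrightarrow$ ``$l$ is a strict record low of $\{j\beta/\alpha\}$'' is nailed down, the identification with $\{r_1,\ldots,r_\crr\}$ is the standard continued-fraction fact and the proposition follows.
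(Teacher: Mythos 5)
Your reduction is correct, and your route is genuinely different from the paper's. The per-$j$ equivalence you assert does hold: since $\epsilon_j\in\{0,1\}$, if $\epsilon_j=0$ then $\{l\beta/\alpha\}=\{j\beta/\alpha\}+\{(l-j)\beta/\alpha\}\ge\{j\beta/\alpha\}$, while $\epsilon_j=1$ gives $\{l\beta/\alpha\}=\{j\beta/\alpha\}-\bigl(1-\{(l-j)\beta/\alpha\}\bigr)<\{j\beta/\alpha\}$; so the proposition becomes: the strict record lows of $j\mapsto\{j\beta/\alpha\}$ occurring at $j\ge 2$ are exactly $r_1,\dots,r_\crr$. (A small bonus of this formulation: the cases $l=\alpha$ and $l>\alpha$, which the paper dispatches separately at the start of its proof, are handled uniformly.) The paper instead sums the $\epsilon_j$: since each is $0$ or $1$, the condition ``$\epsilon_j=1$ for all $j$'' is equivalent to the single numerical equality $2LP(\Delta_l^\circ)=(l-1)(\lfloor l\beta/\alpha\rfloor-1)$ for a lattice-point count under the line $y=\beta x/\alpha$, which is then analyzed via Pick's theorem together with the fact, quoted from Oda as ($\dagger$), that the points $(r_k,t_k)$ are the vertices of the convex hull of the lattice points below that line. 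Your pointwise route avoids Pick's theorem and the summation and is sharper (it sees which $j$ fail, not just how many); the paper's counting route avoids developing any best-approximation theory. Note that both arguments ultimately rest on the same geometric fact: a record low at $l$ means the lattice point $(l,\lfloor l\beta/\alpha\rfloor)$ is vertically closer to the line than all earlier lattice points, which happens precisely at the hull vertices $(r_k,t_k)$.

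One error you must repair when carrying this out: the $r_k/t_k$ here are convergents of the \emph{negative} (Hirzebruch--Jung) continued fraction, and these do \emph{not} alternate sides. One has $r_k\beta-t_k\alpha>0$ for all $k<\crr$ (and $=0$ at $k=\crr$), the quantities $\{r_k\beta/\alpha\}=(r_k\beta-t_k\alpha)/\alpha$ decrease strictly, and all convergents approximate $\beta/\alpha$ from the same side; your claims that ``the sign pattern of $r_k\beta-t_k\alpha$ alternates'' and that ``consecutive convergents lie on opposite sides'' are facts about regular continued fractions and are false in this setting. If you insist on quoting classical one-sided best-approximation theory for the regular expansion, the record lows of $\{j\beta/\alpha\}$ are the \emph{semiconvergents} (intermediate fractions) of one parity — try $\alpha=7$, $\beta=3$, where the record lows $3,5,7$ are not all regular-convergent denominators of $3/7$ — and you then need the standard dictionary identifying those semiconvergents with the negative-CF convergents. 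Alternatively, and more simply, work directly with the recursion $r_k=u_kr_{k-1}-r_{k-2}$ and the identity $r_kt_{k+1}-r_{k+1}t_k=1$ (the paper's (\ref{CF})): the monotone one-sided behavior makes your induction on ``closest returns'' strictly easier than the alternating bookkeeping you braced for. With that correction the argument goes through.
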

\begin{proof}
First assume that $l>\alpha$. Then $\epsilon_\alpha=0$ and $r_k\leq \alpha$ for all $k$, hence the equivalence
follows. If $l=\alpha$ then $l=r_\crr$ and clearly $\epsilon_j=1$ for all $j$ ($\epsilon_j=0$ can happen only if $\alpha\mid j\beta$ which is impossible). Hence, in the sequel we assume $l<\alpha$.

The proof is based on lattice point count in planar  domains. If $D$ is a closed integral polyhedron in the plane, we denote by  $LP(D^\circ)$ the number of lattice points in its interior.
We denote by $\Delta_{P,Q,R}$ the closed triangle with vertices $P,Q,R$.

Let  $\Delta_l$ be  the
closed triangle determined by $y\leq \beta x/\alpha$,  $x\leq l$ and $y\geq 0$.
Then
$$\sum_{1\leq j\leq l-1}\lfloor j\beta/\alpha\rfloor=LP(\Delta_l^\circ).$$
If we replace $j$ by $l-j$ and we add it to  the previous identity, we get that
$$\sum_{1\leq j\leq l-1}\epsilon_j=(l-1)\lfloor l\beta/\alpha\rfloor -2 LP(\Delta_l^\circ).$$
Hence, $\epsilon_j=1$ for all $j$ if and only if
$2 LP(\Delta_l^\circ)=(l-1)(\lfloor l\beta/\alpha\rfloor-1)$.
Hence, we have to prove that
\begin{equation}\label{eq:LP}
2 LP(\Delta_l^\circ)\geq (l-1)(\lfloor l\beta/\alpha\rfloor-1)
\end{equation}
with equality if and only if $l\in\{r_1,\ldots, r_\crr\}$.

Assume that $l=r_k$ for some $k$. Recall that the convex closure of the  points $\{(r_k,t_k)\}_k$ together with $(\alpha,0$) contains all the lattice points of $\Delta^\circ_{(0,0),(\alpha,0),(\alpha,\beta)}$ ($\dagger$),
see e.g. \cite[(1.6)]{oda}.
Moreover, by Pick theorem, for any triangle $\Delta$, $2LP(\Delta^\circ)=2+2\cdot
Area(\Delta)-\mbox{number of}$ $\mbox{lattice points on the boundary of $\Delta$}$. Therefore, $LP(\Delta_{r_k}^\circ)=LP(\Delta_{(0,0),(r_k,0),(r_k,t_k)})$; this by Pick theorem is
$(r_k-1)(t_k-1)$. Since $l=r_k$ and $\lfloor l\beta/\alpha\rfloor=t_k$ by ($\dagger$), (\ref{eq:LP}) follows.

Next, we show that in (\ref{eq:LP}) one has strict inequality whenever $r_k<l<r_{k+1}$.

Let $\Delta$ be the triangle with vertices $(r_k,t_k),(r_{k+1},t_{k+1})$ and (the non--lattice point)
$(r_{k+1}, r_{k+1}t_k/r_k)$ (with two vertices on the line $y=t_ky/r_k$).
Notice that $(r_{k+1},t_{k+1})$ is above the line $y=t_kx/r_k$, cf. also with (\ref{CF}).
\begin{lemma}\label{lem:LP}
$\Delta^\circ$ contains no lattice points. All the lattice points on the boundary of
$\Delta$, except $(r_{k+1},t_{k+1})$, are sitting on the line $y=t_ky/r_k$.
\end{lemma}\begin{proof}
This basically follows from the following identity of continued fractions:
\begin{equation}\label{CF}
\frac{r_k}{t_k}-\frac{r_{k+1}}{t_{k+1}}=\frac{1}{t_kt_{k+1}}.\end{equation}
Indeed,  considering the slope of the segment with ends
$(x,y)$ and $(r_k,t_k)$, the lattice points in $\Delta$ with $y>t_kx/r_k$ and $x<r_{k+1}$ are
characterized by  $r_k<x<r_{k+1}$ and
$$\frac{t_k}{r_k}
<\frac{y-t_k}{x-r_k}\leq \frac{t_{k+1}-t_k}{r_{k+1}-r_k}.$$
By a computation  and using (\ref{CF}) this transforms into
$$0<yr_k-xt_k\leq \frac{x-r_k}{r_{k+1}-r_k},$$
which  has no integral solution.
Since $t_{k+1}-r_{k+1}t_k/r_k=1/r_k\leq 1/2$, there is only one lattice point on the
vertical edge of $\Delta$, namely $(r_{k+1},t_{k+1})$.
\end{proof}

Set $P:=(l, lt_k/r_k)$, the intersection point of $\{x=l\}$ with $\{y=t_kx/r_k\}$;
 and let $Q$ (resp. $R$) be the intersection of $\{x=l\}$ with the segment $[(r_k,t_k),(r_{k+1},t_{k+1})]$
 (resp. with $y=\beta x/\alpha$).
 Let $M$ be the number of lattice points on the open  segment with ends $(r_k,t_k)$ and $P$. Then, by ($\dagger$) and  (\ref{lem:LP}),
\begin{equation}\label{M}
LP(\Delta^\circ_l)=LP(\Delta_{(0,0),(l,0),P}^\circ)+M+1.\end{equation}
(The last `1' counts $(r_k,t_k)$.)
Notice that $R$ is not a lattice point ($l\beta/\alpha\not\in \Z$),
on the line $x=l$ there is no lattice point strict between $R$ and  $Q$ (by $\dagger$),
$Q$ is not a lattice point and there is no lattice point in the interior of $[PQ]$ (by (\ref{lem:LP})). Hence
\begin{equation}\label{PP}
lt_k/r_k\geq \lfloor l\beta/\alpha\rfloor.
\end{equation}
Assume that in (\ref{PP}) one has equality, i.e.,
 $P$ is a lattice point. Then,  by Pick theorem,
$2LP(\Delta_{(0,0),(l,0),P}^\circ)=(l-1)(\lfloor l\beta/\alpha\rfloor-1)-1-M$.
This together with (\ref{M}) provides the needed strict inequality.

Finally, assume that $P$ is not a lattice point. Let $N$ be the number of lattice points on the interior of the segment with ends $(0,0)$ and $P':=(l,\lfloor l\beta/\alpha\rfloor)$. Then comparing the triangles
$\Delta_{(0,0),(l,0),P}$ and $\Delta_{(0,0),(l,0),P'}$, and by Pick theorem we get
$$2 LP(\Delta_l^\circ)\geq (l-1)(\lfloor
 l\beta/\alpha\rfloor-1)+N+M+1.
\qedhere$$
\end{proof}

Now we are ready to formulate the main result of this subsection.
For any $(\alpha,\beta)$ as above, consider the sequence $\{r_1,\ldots, r_\crr\}$ as in (\ref{prop:LP}) and
 define (following Wagreich and VanDyke (cf. \cite{wa}))
 $$f_{\alpha,\beta}(t)=\sum_{k=1}^\crr\ t^{r_k}.$$

The next theorem generalizes the result of VanDyke valid for singularities satisfying $b_0\geq \nu+1$,
\cite{vd}.

 \begin{theorem}\label{th:LP}
 Assume that $\Gamma$ has Seifert invariants $b_0$ and $\{(\alpha_i,\omega_i)\}_{i=1}^\nu$
with $b_0\geq \nu$. Set $\beta_i=\alpha_i-\omega_i$. Then
$$P_{\m_X/\m_X^2}(t)=(b_0-\nu+1)t+\sum_{i=1}^\nu\ f_{\alpha_i,\beta_i}(t).$$
 \end{theorem}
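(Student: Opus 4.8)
The plan is to combine the two main tools already assembled in the excerpt: the reduction formula \eqref{KEY}(ii), which under the rationality hypothesis gives $Q(l)=n_l$ for every $l>0$ (this is precisely Corollary~\ref{QL}), and the lattice-point characterization of the indices $r_k$ in Proposition~\ref{prop:LP}. The whole theorem then amounts to an accurate bookkeeping of the numbers $n_l$ summed over $l$, so my first step is to write
$$P_{\m_X/\m_X^2}(t)=\sum_{l\geq 1}Q(l)t^l=\sum_{l\geq 1}n_l\,t^l.$$
Recall that $n_l=\#\{i : X_l\subset(a_i)\}$ counts the variables $a_i$ dividing \emph{every} monomial of $X_l$, equivalently (by Definition~\ref{eps} and Lemma~\ref{r:trivial}\eqref{2}) those $i$ for which $\epsilon_{i,\ub l}=1$ for all $\ub l\in\Lambda_l$. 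So the strategy is to interchange the two summations and count, for each fixed index $i$, the set of degrees $l$ at which $a_i$ contributes to $n_l$.

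The second step is to pin down exactly when $a_i$ divides all monomials of $X_l$. By the definition of $\epsilon_{i,\ub l}$, the condition $\epsilon_{i,\ub l}=1$ for all $\ub l=(l_1,l-l_1)\in\Lambda_l$ reads
$$\{l_1\beta_i/\alpha_i\}+\{(l-l_1)\beta_i/\alpha_i\}-\{l\beta_i/\alpha_i\}=1\quad\text{for all admissible }l_1.$$
Under the minimal-rational hypothesis $b_0\geq\nu$, Lemma~\ref{lem:ineq}(a) guarantees $s_l\geq 0$ for all $l$, so that $\Lambda_l=\{(j,l-j):1\leq j\leq l-1\}$ — every splitting is admissible. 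Thus the condition is \emph{exactly} the hypothesis of Proposition~\ref{prop:LP} with $(\alpha,\beta)=(\alpha_i,\beta_i)$, which tells us that $a_i$ divides all of $X_l$ precisely when $l\in\{r_1^{(i)},\ldots,r_{\crr_i}^{(i)}\}$, the numerators in the continued-fraction convergents of $\alpha_i/\beta_i$. Here I must be slightly careful about small $l$: Proposition~\ref{prop:LP} is stated for $l\geq 2$, and the leg index $i$ contributes to $n_1$ only when $\Lambda_1=\emptyset$, which it is, so $a_i$ vacuously divides $X_1$; I would treat $l=1$ separately below.

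The third step assembles the count. Summing over legs,
$$\sum_{l\geq 2}n_l\,t^l=\sum_{i=1}^\nu\ \sum_{k=1}^{\crr_i}t^{r_k^{(i)}}=\sum_{i=1}^\nu f_{\alpha_i,\beta_i}(t).$$
It remains to compute the $l=1$ term, i.e.\ $Q(1)=n_1$. Since $s_1=b_0-\sum_i\ce{\omega_i/\alpha_i}=b_0-\nu$ (as $0<\omega_i<\alpha_i$ forces $\ce{\omega_i/\alpha_i}=1$), and $\Lambda_1=\emptyset$ so $X_1=\emptyset$, Lemma~\ref{l:0or1}(I) gives $Q(1)=s_1+1=b_0-\nu+1$. (Every leg vacuously divides the empty $X_1$, consistent with $n_1=\nu$ only if one does not also subtract the relations; the clean route is the direct $Q(1)=s_1+1$ computation via Lemma~\ref{l:0or1}.) Collecting the $l=1$ term with the leg-sum yields exactly
$$P_{\m_X/\m_X^2}(t)=(b_0-\nu+1)t+\sum_{i=1}^\nu f_{\alpha_i,\beta_i}(t),$$
as claimed. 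The main obstacle I anticipate is not any single hard estimate — Proposition~\ref{prop:LP} already carries the analytic weight — but rather the careful matching of conventions at the boundary degree $l=1$ and the verification that $\Lambda_l$ is the full set of splittings (so that Proposition~\ref{prop:LP}'s ``for all $j$'' hypothesis genuinely coincides with ``$\epsilon_{i,\ub l}=1$ for all $\ub l\in\Lambda_l$''); this coincidence hinges squarely on $b_0\geq\nu$ through Lemma~\ref{lem:ineq}(a), and is exactly the point where the minimal-rational assumption is used.
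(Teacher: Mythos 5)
Your proposal is correct and follows essentially the same route as the paper, whose proof is precisely the two steps you carry out: $Q(1)=s_1+1=b_0-\nu+1$ computed directly, and for $l\ge 2$ the identity $Q(l)=n_l$ from Corollary~\ref{QL} combined with Proposition~\ref{prop:LP} to recognize $n_l$ as the number of legs $i$ with $l\in\{r_1^{(i)},\dots,r_{\crr_i}^{(i)}\}$. Your explicit handling of the boundary degree $l=1$ (where $\Lambda_1=\emptyset$, so the count $n_1$ must not be used and one falls back on $Q(1)=\dim(A/I)_{s_1}=s_1+1$) is exactly the separate case the paper's terse proof also treats first.
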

 \begin{proof}
 Clearly, $Q(1)=s_1+1=b_0-\nu+1$. For $l>1$, $Q(l)$ is given by (\ref{QL}) and (\ref{prop:LP}).
 \end{proof}

 \subsection{The case of automorphic forms.}
 Assume that $G$ is a Fuchsian group (of the first kind) with signature
 $(g=0; s; \alpha_1,\ldots,\alpha_\nu)$, where $s\geq 0$.
 Let $A(G)$ be the graded ring of automorphic forms relative to $G$; for details see \cite{wa2}.
 Then by \cite[(5.4.2)]{wa2}, $X=Spec(A(G))$ is a normal weighted homogeneous singularity whose star--shaped
 graph has Seifert invariants $b_0=\nu+s-2$, and $\{(\alpha_i,\omega_i)\}_{i=1}^\nu$ with all $\omega_i=1$.
The values $s\geq 2$, $s=1$ resp. $s=0$ correspond exactly to the fact that
$X$ is minimal rational, non--minimal rational or minimally elliptic, cf. (\cite[(5.5.1)]{wa2}).

 Wagreich in \cite[Theorem (3.3)]{wa2} provides an incomplete list for $P_{\m_X/\m_X^2}(t)$ for all these singularities. By our method not only we can reprove his formulae, but we also
 complete his result clarifying all the possible cases.

 Notice that if $\beta=1$ then  $f_{\alpha,\beta}(t)=f_{\alpha,1}(t)=\sum_{k=2}^\alpha\, t^k$.

\begin{theorem}\label{th:W}
Assume that the Seifert invariants of the graph $\Gamma$ satisfy $\omega_i=1$ for all $i$ and $s=b_0-\nu+2\geq 0$
and $\nu\geq 3$. Then $P_{\m_X/\m_X^2}(t)$ has the next forms:

\noindent (I) \ The cases considered by Wagreich \cite{wa2}:
$$P_{\m_X/\m_X^2}(t)=f(t)+\sum_{i=1}^\nu\ f_{\alpha_i,1}(t),$$
where $f(t)$ is given by the following list
$$
\begin{array}{ll}
s\geq 2 & f(t)=(b_0-\nu+1)t\\
s=1 & f(t)=-t^2+(\nu-2)t^3\\
s=0, \ \nu\geq 4, \ \sum_i\alpha_i\geq 11 & f(t)=-3t^2+(\nu-5)t^3\\
s=0, \ \nu=3,\ \alpha_i\geq 3 \ \mbox{for all $i$}, \ \sum_i\alpha_i\geq 12 &
f(t)=-3t^2-2t^3-t^4\\
s=0, \ \nu=3,\ \alpha_1=2, \ \alpha_2,\alpha_3\geq 4, \ \sum_i\alpha_i\geq 13&
f(t)=-3t^2-2t^3-t^4-t^5\\
s=0, \ \nu=3,\ \alpha_1=2, \ \alpha_2=3, \ \alpha_3\geq 9&
f(t)=-3t^2-2t^3-t^4-t^5-t^7
\end{array}$$

\noindent (II). In all the remaining cases  $s=0$ (and $X$ is a hypersurface with $\gamma=1$),
and $P_{\m_X/\m_X^2}(t)$ is:
$$
\begin{array}{ll}
\Sf=(1,(2,3,7),\ub 1) &  t^6+t^{14}+t^{21}, \\
\Sf=(1,(2,3,8),\ub 1) &  t^6+t^8+t^{15},\\
\Sf=(1,(2,4,5),\ub 1) &  t^4+t^{10}+t^{15},\\
\Sf=(1,(2,4,6),\ub 1) &  t^4+t^6+t^{11},\\
\Sf=(1,(2,5,5),\ub 1) &   t^4+t^5+t^{10},\\
\Sf=(1,(3,3,4),\ub 1)  &  t^3+t^8+t^{12},\\
\Sf=(1,(3,3,5),\ub 1) &   t^3+t^5+t^9,\\
\Sf=(1,(3,4,4),\ub 1) &   t^3+t^4+t^8,\\
\Sf=(2,(2,2,2,3),\ub 1) &  t^2+t^6+t^9,\\
\Sf=(2,(2,2,2,4),\ub 1) &  t^2+t^4+t^7,\\
\Sf=(2,(2,2,3,3),\ub 1) &  t^2+t^3+t^6,\\
\Sf=(3,(2,2,2,2,2),\ub 1) &  2 t^2+t^5.\\
\end{array}$$
\end{theorem}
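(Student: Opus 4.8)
The plan is to split the argument according to the value of $s=b_0-\nu+2$, using throughout that by Proposition (\ref{th:NR}) the polynomial $P_{\m_X/\m_X^2}(t)$ is a topological invariant in all cases at hand (the graph is rational for $s\geq 1$ and numerically Gorenstein elliptic, indeed minimally elliptic, for $s=0$); hence every $Q(l)=\dim(A/(I+J(l)))_{s_l}$ may be computed combinatorially. The common building block is that, since $\omega_i=1$, one has $\beta_i=\alpha_i-1$ and the continued fraction of $\alpha_i/\beta_i=\alpha_i/(\alpha_i-1)$ equals $[2,\dots,2]$ (with $\alpha_i-1$ entries); the associated sequence $\{r_1,\dots,r_{\crr}\}$ of Proposition (\ref{prop:LP}) is therefore $\{2,3,\dots,\alpha_i\}$, so that $f_{\alpha_i,\beta_i}(t)=f_{\alpha_i,1}(t)=\sum_{k=2}^{\alpha_i}t^k$. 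This already settles the minimal rational range $s\geq 2$ (i.e. $b_0\geq\nu$): Theorem (\ref{th:LP}) gives at once $P_{\m_X/\m_X^2}(t)=(b_0-\nu+1)t+\sum_i f_{\alpha_i,1}(t)$, the first line of list (I).

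For $s=1$ and $s=0$ the graph is no longer minimal rational, so (\ref{th:LP}) does not apply, and the discrepancy between the answer and $\sum_i f_{\alpha_i,1}(t)$ is exactly the correction $f(t)$, which I claim is concentrated in small degrees. First I would compute $s_l$ explicitly from (\ref{eq:SL}): with $\omega_i=1$, $s_l=lb_0-\sum_i\ce{l/\alpha_i}$, and record the initial values ($s_1=-1$ when $s=1$; $s_1=-2$ and $s_l\geq-1$ for $l\geq 2$ when $s=0$, the latter forced by $P_{H^1}(t)=t^\gamma=t$ via Proposition (\ref{prop:sharp}) and $\gamma=1$). The key observation is that $\Lambda_l=\emptyset$, hence $X_l=\emptyset$ and $Q(l)=s_l+1$, precisely for the small $l$ admitting no splitting $l=l_1+l_2$ with $s_{l_1},s_{l_2}\geq 0$: this holds for $l=2,3$ when $s=1$ (since $s_1<0$ but $s_2=\nu-2>0$), and for a longer initial range when $s=0$, longest for $\nu=3$ where already $s_2=-1<0$ so $\Lambda_4$ stays empty. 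A direct evaluation of $s_l+1$ on this initial range, minus the part of $\sum_i f_{\alpha_i,1}$ that overlaps it, reproduces exactly the listed $f(t)$.

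For the remaining (large) degrees the plan is to prove $Q(l)=n_l=\#\{i:2\leq l\leq\alpha_i\}$. I would first verify from the explicit form of $s_l$ that the hypothesis $s_l\geq\nu-m_l-1$ of Lemma (\ref{KEY})(ii) holds once $l$ exceeds the initial range, giving $Q(l)=n_l$; and then identify $n_l$ with $\#\{i:2\leq l\leq\alpha_i\}$ by checking that deleting from $\Lambda_l$ the finitely many splittings having a part of small degree (those with $s_{l_i}<0$) does not change which variables $a_i$ divide every monomial of $X_l$ — that is, Proposition (\ref{prop:LP}) still computes $n_l$ after these deletions. Summing the two ranges yields $f(t)+\sum_i f_{\alpha_i,1}(t)$, proving (I). Part (II) is the complementary finite list: these are precisely the $s=0$ germs lying below the stated numerical thresholds (small $\sum_i\alpha_i$), for which the anomalous initial range is no longer separated from the generic regime; each is a single explicit Seifert datum and I would compute $P_{\m_X/\m_X^2}$ directly from (\ref{l:m2}) and (\ref{l:0or1}). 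Since these are the weighted homogeneous minimally elliptic hypersurfaces, one knows $e.d.=3$, so only the three generator degrees need to be pinned down in each case.

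The main obstacle is the bookkeeping in the $s=0$ cases: establishing the exact length of the anomalous initial range for each sub-family — notably the three $\nu=3$ sub-cases, where it extends to degrees $4$, $5$, respectively $7$ as $\alpha_1,\alpha_2$ shrink — and proving that the stated thresholds ($\sum_i\alpha_i\geq 11$, and the $\nu=3$ analogues) are exactly those for which $s_l\geq\nu-m_l-1$ first holds at the end of that range, equivalently those separating list (I) from the sporadic list (II). Verifying that the deleted small splittings leave $n_l$ unchanged in the generic range, while they do change it in precisely the sporadic cases (one checks, e.g., $e<0$ forces $\sum_i 1/\alpha_i<\nu-2$, which is what isolates the finite list), is the delicate point; the remainder is a controlled but lengthy case-by-case computation.
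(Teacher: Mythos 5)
You reconstruct, with essentially the right ingredients, what the paper itself compresses into one sentence (``case by case verification based on the results of (\ref{ss:GP})''): the $s\ge 2$ line is indeed just Theorem \ref{th:LP} together with $f_{\alpha_i,\alpha_i-1}(t)=\sum_{k=2}^{\alpha_i}t^k$, the corrections $f(t)$ for $s=1,0$ in small degrees do come from $\Lambda_l=\emptyset$ and $Q(l)=\max(0,s_l+1)$ (your low--degree coefficients check out; e.g.\ for $s=1$ one gets $Q(3)=s_3+1=2\nu-2-\#\{i:\alpha_i=2\}=(\nu-2)+\#\{i:\alpha_i\ge3\}$, as claimed), and invoking Proposition \ref{th:NR} to know in advance that the answer is topological is legitimate. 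The genuine flaw is in your two--phase architecture: you claim that once $l$ exceeds the ``anomalous initial range'' one has $X_l\ne\emptyset$ and the hypothesis $s_l\ge\nu-m_l-1$ of Lemma \ref{KEY}(ii), so that $Q(l)=n_l$. You identify that range with the support of $f(t)$ (degrees up to $4$, $5$, resp.\ $7$ in the three $\nu=3$ sub-cases), but those are merely the degrees where the numerical correction is nonzero; the structural anomaly $\Lambda_l=\emptyset$ persists well beyond, interleaved with ordinary degrees.

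Concretely, for $\Sf=(1,(2,3,\alpha_3),\ub 1)$ with $\alpha_3\ge 13$ one computes $s_l=-1$ for $l\in\{2,3,4,5,7\}$ and $s_l=0$ for $l\in\{6,8,9,10,11,13\}$, whence $\Lambda_l=\emptyset$ for $l\in\{8,9,10,11,13\}$: there $X_l=\emptyset$, the integers $m_l,n_l$ are not even defined, and Lemma \ref{KEY}(ii) is inapplicable --- yet these are exactly among the degrees carrying the generators producing the tail $\sum_{k=8}^{\alpha_3}t^k$ of the stated answer. (Degree $12$, by contrast, has $\Lambda_{12}=\{(6,6)\}$, $X_{12}=\{a_3\}$, and is settled by Lemma \ref{l:0or1}(II); only from $l\ge 14$ does $(6,l-6)\in\Lambda_l$ make Lemma \ref{KEY}(ii) available, with $m_l=2$ and $J(l)=(a_3)$.) The same happens in the sub-family $(2,4,\alpha_3)$ at $l=9$, and even for $\nu\ge 4$: $\Sf=(2,(2,2,2,5),\ub 1)$ has $\Lambda_5=\emptyset$ although $l=5$ lies beyond your initial range. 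So the step ``verify that KEY(ii) holds once $l$ exceeds the initial range'' would fail as written. The repair stays inside your own toolkit --- at every $l$ with $X_l=\emptyset$ use $Q(l)=\max(0,s_l+1)$ via Lemma \ref{l:0or1}(I) and check arithmetically, per residue class, that $s_l+1$ equals the claimed coefficient, an extra computation your plan does not budget for --- but the clean dichotomy is false, and it is precisely this regime of sporadically negative $s_l$ (together with genuine failures of $s_l\ge\nu-m_l-1$ when $X_l\ne\emptyset$, as for $\Sf=(2,(2,2,3,3),\ub 1)$ at $l=6$, where $X_6=\{a_1a_2,a_3a_4\}$ and $s_6=2<3$) that generates the sporadic list (II).
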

\begin{proof} The proof is a case by case verification based on the results of (\ref{ss:GP}).
\end{proof}

\section{Examples when $P_{\m_X/\m_X^2}(t)$ is not topological}\label{s:NOT}
\subsection{} Our goal is to present an
equisingular family of weighted homogeneous
 surface singularities in
 which the  embedding dimension is not constant.
As a consequence, the embedding dimension of weighted
 homogeneous singularities, in general,   is
 not a topological invariant (i.e., cannot be determined by
 the Seifert invariants).

\begin{example}\label{ss:223377} Consider the Seifert invariants
$\Sf=(2,(2,2,3,3,7,7),\ub 1)$.

Suppose that the numbering of $E_i$'s satisfies
$(-b_1, \dots, -b_6)=\ba$, and  we will use similar  notations as in (\ref{ex:2345}).
Here is the list of the  fundamental invariants:
\begin{enumerate}
 \item $e=-1/21$, $\alpha=42$, $\co=2$, $\gamma=43$.
 \item $|H|=84$.
 \item The fundamental cycle is $Z=\{3, 3, 2, 2, 1, 1,6\}$,
       $p_a(Z)=7$.
 \item The canonical cycle is $Z_K=\{22, 22, 15, 15, 7, 7, 44\}$.
 \item The Hilbert series is
 \begin{multline*}
	 P_{G_X}(t)=\frac{1-2 t-4 t^2-3 t^3+2 t^5+2
       t^6+t^7+2 t^8+2 t^9+t^{10}}
{1+t-t^3-t^4-t^7-t^8+t^{10}+t^{11}}+P_{H^1}(t) \\
=1+t^6+t^{12}+t^{14}+t^{18}+t^{20}
+t^{21}+t^{24}+t^{26}+t^{27}+t^{28}+t^{30} \\
+t^{32}+t^{33}+t^{34}+t^{35}+t^{
   36}+t^{38}+t^{39}+t^{40} \\
+t^{41}+3 t^{42}+t^{44}+t^{45}+t^{46}+t^{47}+3
   t^{48}+t^{49}+t^{50}+O\left(t^{51}\right).
       \end{multline*}
 \item $p_g=24$ and \begin{multline*}
	P_{H^1}(t)=3
   t+t^2+t^3+t^4+t^5+t^7+t^8+t^9+t^{10} \\
+t^{11}+t^{13}+t^{15}+t^{16}+t^{17}+t^{19} \\
+t^{22}+t^{23}+t^{25}+t^{2
   9}+t^{31}+t^{37}+t^{43}.
       \end{multline*}
\item The degrees of $z_1,\ldots z_6$ are $(\frac{21}{2}, \frac{21}{2}, \frac{21}{3}, \frac{21}{3},
\frac{21}{7} ,\frac{21}{7})$.
\end{enumerate}
\noindent {\bf Claim.}
$s_l\ge 0$ for $l\ge 44$.
In particular,
$X_{l+44}\ni 1$ for $l\ge \alpha=42$.
\begin{proof}
From the expression of $P_{H^1}(t)$, we read  $s_1=-4$ and
 $s_l\ge -2$ otherwise. Then use
 $s_{l+\alpha}=s_l+\co$. For the second statement use
(\ref{r:trivial})(4).
\end{proof}
Then, by a computation (for $l\leq 86$) one verifies that the only values $l$ for which $1\not\in X_l$ are
$l\in\{6,14,21,42\}$. In the first three cases $s_l=0$ and $X_l=\emptyset$.  For $l=42$ one has
$s_{42}=2$ and $X_{42}=\{a_1a_2,a_3a_4,a_5a_6\}$. It depends essentially on the choice of the
full rank matrix $(a_{ji})$, cf. (\ref{NT}). In particular,
$$
e.d.(X,o)= \begin{cases}
			  3 & \mbox{if} \ p_1p_2-p_3p_4\ne0, \\ 4 & \mbox{if} \ p_1p_2-p_3p_4=0;
			 \end{cases}
$$where for the  notations $\{p_k\}$, see  (\ref{NT}).

Next, let us find a system of equations for $X$.
We take the following splice diagram equations:
$$
p_1x_1^2+x_2^2+x_3^3, \quad
p_2x_1^2+x_2^2+x_4^3, \quad
p_3x_1^2+x_2^2+x_5^7, \quad
p_4x_1^2+x_2^2+x_6^7.
$$
The action of the group $H$  is given by the following diagonal matrix:
$$
[-i,i,e^{\frac{ i \pi }{3}},e^{-\frac{ i \pi }{3}},e^{\frac{ i \pi }{7}},e^{-\frac{ i \pi }{7}}].
$$
Then $R^H$ is
generated by the following 21 monomials (in the next computations we used SINGULAR \cite{SING}):
\begin{gather*}
 z_{5}z_{6} , \; z_{3}z_{4} , \; z_{1}z_{2} , \; z_{2}^{4} ,
 \; z_{1}^{4} , \; z_{2}^{2}z_{4}^{3} , \;
 z_{1}^{2}z_{4}^{3} , \; z_{2}^{2}z_{3}^{3} , \;
 z_{1}^{2}z_{3}^{3} , \; z_{4}^{6} , \; z_{3}^{6} , \; \\
 z_{2}^{2}z_{6}^{7} , \;
z_{1}^{2}z_{6}^{7} , \; z_{2}^{2}z_{5}^{7} , \;
z_{1}^{2}z_{5}^{7} , \; z_{4}^{3}z_{6}^{7} , \;
z_{3}^{3}z_{6}^{7} , \; z_{4}^{3}z_{5}^{7} , \;
z_{3}^{3}z_{5}^{7} , \; z_{6}^{14} , \; z_{5}^{14}.
\end{gather*}
These are denoted by $y_1, \dots , y_{21}$. Then $X$ is
defined by the following ideal:
$$
\begin{array}{ll}
y_{9}+y_{3}^{2}+p_{1}y_{5}, & y_{11}-2p_{1}y_{3}^{2}-y_{4}-p_{1}^{2}y_{5}, \\
y_{8}+p_{1}y_{3}^{2}+y_{4}, & y_{10}-2p_{2}y_{3}^{2}-y_{4}-p_{2}^{2}y_{5}, \\
y_{7}+y_{3}^{2}+p_{2}y_{5}, &  y_{2}^{3}-(p_{1}+p_{2})y_{3}^{2}-y_{4}-p_{1}p_{2}y_{5}, \\
y_{6}+p_{2}y_{3}^{2}+y_{4}, & y_{19}-(p_{1}+p_{3})y_{3}^{2}-y_{4}-p_{1}p_{3}y_{5}, \\
y_{3}^{4}-y_{4}y_{5}, & y_{18}-(p_{2}+p_{3})y_{3}^{2}-y_{4}-p_{2}p_{3}y_{5}, \\
y_{15}+y_{3}^{2}+p_{3}y_{5}, & y_{17}-(p_{1}+p_{4})y_{3}^{2}-y_{4}-p_{1}p_{4}y_{5}, \\
y_{14}+p_{3}y_{3}^{2}+y_{4}, & y_{16}-(p_{2}+p_{4})y_{3}^{2}-y_{4}-p_{2}p_{4}y_{5}, \\
y_{13}+y_{3}^{2}+p_{4}y_{5}, & y_{21}-2p_{3}y_{3}^{2}-y_{4}-p_{3}^{2}y_{5}, \\
y_{12}+p_{4}y_{3}^{2}+y_{4}, & y_{20}-2p_{4}y_{3}^{2}-y_{4}-p_{4}^{2}y_{5}, \\
y_{1}^{7}-(p_{3}+p_{4})y_{3}^{2}-y_{4}-p_{3}p_{4}y_{5} &
\end{array}
$$
By eliminating the variables except for $y_1$, $y_2$, $y_3$,
and $y_5$,
we obtain
$$
\begin{array}{c}
y_{3}^{4}-y_{2}^{3}y_{5}+(p_{1}+p_{2})y_{3}^{2}y_{5}+p_{1}p_{2}y_{5}^{2}, \\
y_{1}^{7}-y_{2}^{3}+(p_{1}+p_{2}-p_{3}-p_{4})y_{3}^{2}+(p_{1}p_{2}-p_{3}p_{4})y_{5}
\end{array}
$$
Notice that if $p_{1}p_{2}-p_{3}p_{4}\not=0$, then $y_5$ can also be eliminated.
\end{example}

\begin{example} It is instructive to consider the graph with Seifert invariants
 $\Sf=(3,(2,2,3,3,7,7),\ub 1)$ too; it has the same legs as (\ref{ss:223377}),
 but its $b_0$ is $-3$ (instead of $-2$).
By this `move', we modify the graph into
 the `direction of rational graphs'. The consequence is that
 the ambiguity of the example (\ref{ss:223377}) disappears, and $P_{\m_X/\m_X^2}(t)$
 becomes topological (and the embedding dimension increases).

In this case
$e=-22/21$, $\alpha=42$, $\gamma=43/22$,  $P_{H^1}(t)=2t$. Since $\gamma <2$,
$s_l\ge -1$ for $l\ge 2$. Since $s_3=1$, $s_l\ge 0$ for
$l\ge 5$. Then $X_l\ni 1$ for $l\ge 5+\alpha=47$.

The following is the list of $(l, s_l, X_l)$ with $s_l\ge 0$
and $1\not \in X_l$ (and  where
$l=(1, 1, 0, 0, 0, 0)$, $m=(0, 0, 1, 1, 0, 0)$,
 $n=(0, 0, 0, 0, 1, 1)$).
$$
\begin{array}{ccc}
 2 & 0 & \emptyset  \\
 3 & 1 & \emptyset  \\
 4 & 2 & \{n\} \\
 5 & 3 & \{n\} \\
 6 & 6 & \{m+n,l+n\} \\
 7 & 5 & \{n\} \\
 14 & 14 & \{n,m+n,l+n,l+m\} \\
  21 & 21 & \{n,m,m+n\} \\
  42 & 44 & \{n,m,m+n,l,l+n,l+m,l+m+n\} \\
\end{array}
$$
By a computation one gets
$$P_{\m_X/\m_X^2}(t)=t^2+2t^3+2t^4+2t^5+2t^6+2t^7.$$
\end{example}

\section{Splice--quotients with star--shaped graphs}
\subsection{General discussion.} In this section we  extend
our study to the case of splice--quotient singularities.
We briefly recall their definition under the assumption that their graphs is
star--shaped.

Let $\Gamma$ be as above, and
let $\{f_j\}$ be the set of Brieskorn polynomials considered in
subsection (\ref{ss:neumann}), and  let $X$ be the quotient (weighted homogeneous singularity)
of their zero--set by the action of $H$.
Let $\{g_j(z_1, \dots ,z_{\nu})\}_{j=1}^{\nu-2}$ be a set of power series satisfying  the
following conditions:
\begin{itemize}
 \item  with respect to the weights
       $\mathrm{wt}(z_i)=(|e|\alpha_i)^{-1}$,
the degree of the leading form of $g_j$ is bigger
       than $\deg f_j$ for any $j$;
 \item all monomials in $g_j$ are  elements of
       $\mu$-eigenspace $R^{\mu}$ (recall that $\{f_j\}
       \subset R^{\mu}$).
\end{itemize}

Then the singularity $Y:=\defset{(z_i)\in
\C^{\nu-2}}{f_j+g_j=0, \; j=1, \dots, \nu-2}/H$ is a normal
surface singularity; it is called a
splice--quotient (see \cite{nw-CIuac}).
All these singularities belong to an equisingular family sharing
the same resolution graph $\Gamma$.

By  upper-semicontinuity one gets
\begin{prop}\label{prop:semicon}
 $e.d.(Y,o) \le e.d.(X,o)$.
\end{prop}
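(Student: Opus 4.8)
The plan is to realize both $(X,o)$ and $(Y,o)$ as fibres of a single flat family having $(X,o)$ as its central (most degenerate) fibre, and then to invoke the upper--semicontinuity of the embedding dimension along such a family. Since the embedding dimension of the fibres of a flat family can only jump \emph{up} on a special fibre, the inequality $e.d.(Y,o)\le e.d.(X,o)$ will follow at once.

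First I would manufacture the degeneration out of the weight grading itself. As the weights $\mathrm{wt}(z_i)=(|e|\alpha_i)^{-1}$ are rational, fix a positive integer $N$ with $N\cdot\mathrm{wt}(z_i)\in\Z_{>0}$ for all $i$, and let $\C^*$ act on $\C^{\nu}$ by $t\cdot(z_i)=(t^{N\mathrm{wt}(z_i)}z_i)$. Each Brieskorn polynomial $f_j$ is then an eigenvector of weight $Nd$, where $d=|e|^{-1}=\deg f_j$, while by the defining hypothesis on the $g_j$ every monomial occurring in $g_j$ has weight strictly larger than $Nd$. Setting
$$F_j^{(t)}:=t^{-Nd}\bigl(t\cdot(f_j+g_j)\bigr)=f_j+\sum_{m}t^{N(\deg m-d)}\,m,$$
where $m$ ranges over the monomials of $g_j$ and each exponent $N(\deg m-d)$ is a positive integer, one obtains a family $\{F_j^{(t)}=0\}_{j=1}^{\nu-2}\subset\C^{\nu}$ over $t\in\C$ whose fibre over $t\ne 0$ is isomorphic (via $t^{-1}\cdot$) to the splice complete intersection $\{f_j+g_j=0\}$, and whose fibre over $t=0$ is exactly the Brieskorn complete intersection $\{f_j=0\}$ of Theorem \ref{th:N}.

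Next I would verify flatness and pass to the quotient. Each fibre is a complete intersection of the expected dimension $2$ (the $f_j$ form a regular sequence by (\ref{th:N}), and this persists for the deformed sequence $F_j^{(t)}$), so the total space is a flat family of complete intersections over $\C$, with $X^{ab}$ as central fibre and copies of $Y^{ab}$ as the remaining fibres. The diagonal $H$--action commutes with the $\C^*$--action above, hence acts fibrewise; since $\mathrm{char}\,\C=0$, taking $H$--invariants is exact and realises the structure sheaf of the quotient family as a direct summand of a flat $\C[t]$--module. Thus the quotient family $\{F_j^{(t)}=0\}/H\to\C$ is again flat, with central fibre $(X,o)$ and general fibre isomorphic to $(Y,o)$.

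Finally, embedding dimension is upper--semicontinuous on the fibres of a flat family: the function $t\mapsto\dim_{\C}(\m_t/\m_t^2)$ can only increase under specialisation. Since $t=0$ is a specialisation of the generic point, this yields $e.d.(X,o)\ge e.d.(Y,o)$, as claimed. The only genuinely delicate point is the identification of the flat limit at $t=0$ with $X^{ab}$, i.e.\ that degenerating by the weight $\C^*$--action replaces each $f_j+g_j$ by its lowest--weight part $f_j$; this rests on the $f_j$ forming a regular sequence, so that the family remains one of $2$--dimensional complete intersections and no embedded or lower--dimensional components appear in the limit. Granting this, the passage through the finite quotient and the semicontinuity step are routine.
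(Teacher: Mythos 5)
Your proof is correct and takes essentially the same route as the paper, which simply invokes upper--semicontinuity along the equisingular family degenerating the splice--quotient $(Y,o)$ to the weighted homogeneous germ $(X,o)$. Your write-up merely makes explicit what the paper leaves implicit: the weight $\C^*$--degeneration $F_j^{(t)}$ sending $f_j+g_j$ to its lowest--weight part $f_j$, flatness via the complete--intersection/regular--sequence property, the compatibility of the $H$--quotient (using exactness of invariants in characteristic zero), and the semicontinuity of $\dim_{\C}\m_t/\m_t^2$ along a flat family with section.
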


Here is the main question of the present
section: is $P_{\m_X/\m_X^2}(t)$  topological (i.e. independent of the choice of
the matrix $(a_{ji})$ and of the power series $g_{j}$) 
--- at least when for the weighted homogeneous case
with the corresponding graph the answer is positive ?
Here the graded module structure of $\m_X/\m_X^2$ is induced
by the natural weighted filtration of the local ring
$\cO_{X,o}$ defined by the weights  $\mathrm{wt}(z_i)=(|e|\alpha_i)^{-1}$.

The answer splits in two parts. First, if the graph is rational, 
or Gorenstein 
 elliptic, then by a semi--continuity argument, the 
discussion of subsection (\ref{ss:NR}) remain true for splice 
quotients as well. Moreover, the combinatorial formulas of 
theorems (\ref{th:LP}) (valid for $b_0\geq \nu$)
and  of (\ref{th:W}) (valid for $\omega_i=1$ and $b_0-\nu\geq -2$) 
are true for splice quotients too
(as far as the corresponding combinatorial assumptions on the graph are satisfied).

On the other hand, we do not expect that the positive results proved for weighted homogeneous singularities listed in (\ref{ss:O1}) (the case $\co=1$),
in (\ref{ss:OS}) ($\co$ `small'), or in (\ref{cor:nusmall}) ($\nu\leq 5$) will remain valid for splice quotients too.
A counterexample in the case $\co=1$ is analyzed in the next subsection.
This example also emphasizes that in the inequality (\ref{prop:semicon}) 
 the strict inequality might occur.

\subsection{}
In \cite[(4.5)]{supiso}, it is shown that the embedding dimension is not constant
in an equisingular deformation of a weighted homogeneous
singularity.
However, in that example, the general fibre is not a
splice--quotient.
The aim of this section is to show the following:

\begin{prop}\label{p:nontop}
There exists an equisingular  deformation of a weighted homogeneous
singularity  in
 which the  embedding dimension is not constant, and it
 satisfies the following:
\begin{enumerate}
 \item The embedding dimension of the central fibre is
       determined by the Seifert invariants (i.e. all the weighted homogeneous singularities with
       the same link have the same embedding  dimension).
 \item Every fibre is a splice-quotient.
\end{enumerate}
\end{prop}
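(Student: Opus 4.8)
The plan is to produce an \emph{explicit} equisingular family of splice--quotient singularities whose embedding dimension jumps, while arranging the combinatorics so that the central (weighted homogeneous) fibre has embedding dimension pinned down topologically. The key is to locate a topological type where the weighted homogeneous case is \emph{topological} by one of our positive results --- the natural candidate is $\co=1$, handled in Theorem~\ref{c:toped} --- but where a higher--order perturbation $g_j$ (allowed in the splice--quotient definition, cf. subsection~\ref{ss:neumann}) can delete an extra minimal generator. So first I would fix Seifert invariants with $\co=1$ for which $s_\alpha=1$ and $X_\alpha\subset\{a_1,\ldots,a_\nu\}$ is small enough that Theorem~\ref{c:toped}(b) forces $Q(\alpha)$, and hence the whole polynomial $P_{\m_X/\m_X^2}(t)$, to be topological. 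This immediately secures requirement (1) of the proposition for every weighted homogeneous germ on that graph.

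Next, to get requirement (2) together with a jump, I would write down the universal abelian cover equations $f_j=\sum_i a_{ji}z_i^{\alpha_i}$ (Theorem~\ref{th:N}) and add the perturbing terms $g_j$, \emph{subject} to the two constraints of the splice--quotient definition: each $g_j$ lies in the eigenspace $R^\mu$, and its leading weight exceeds $\deg f_j$. Since $\co=1$ the character $\mu$ is trivial (Lemma~\ref{lem:OO}), so every $g_j$ may be taken from $R^H$ itself, which makes the bookkeeping of invariant monomials transparent. The idea is to choose the $g_j$ so that in the quotient ring the image of $\m_X^2$ in degree $\alpha$ is enlarged (or reduced) relative to the weighted homogeneous fibre, i.e. a monomial that was a genuine minimal generator in the central fibre becomes decomposable (or conversely), thereby changing $e.d.(Y,o)$. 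Concretely I would pass to the $A$--model $(A/(I+J(\alpha)))_{s_\alpha}$ of subsection~\ref{s:hpart}, and verify by an explicit \cite{SING} computation that a specific higher--order term forces an extra relation in the maximal ideal of the general fibre.

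The main obstacle, and where most of the work lives, is checking that the perturbed family is genuinely equisingular --- that every fibre remains a splice--quotient with the same graph $\Gamma$ --- and simultaneously that the embedding dimension really changes. Equisingularity along the family follows from the splice--quotient theory once the $g_j$ are admissible, but one must confirm the full rank / Neumann--Wahl conditions persist and that the $H$--action is unchanged, so that normality and the identification $Y=\{f_j+g_j=0\}/H$ hold throughout. The jump itself I expect to establish by exhibiting explicit eliminations: in the central fibre the elimination of the variables $a_3,\ldots,a_\nu$ via the linear forms $\ell_j$ leaves a prescribed Stanley--Reisner quotient in $\C[a_1,a_2]$, whereas the presence of a nonzero $g_j$ contributes a genuine degree--raising relation that collapses one generator, exactly as the discriminant mechanism of Example~\ref{NT} does in the weighted homogeneous setting.

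Finally I would record that this simultaneously demonstrates the strict inequality permitted in Proposition~\ref{prop:semicon}: the central (weighted homogeneous) fibre $X$ attains the larger embedding dimension, while the generic splice--quotient fibre $Y$ has strictly smaller $e.d.(Y,o)$, so $e.d.(Y,o)<e.d.(X,o)$. Presenting the explicit equations and the eliminated system (as in Example~\ref{ss:223377}) then makes all three assertions --- the jump, the topological determinacy of the central fibre, and the splice--quotient nature of every fibre --- simultaneously verifiable, completing the proof.
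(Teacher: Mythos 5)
Your overall skeleton coincides with the paper's: take $\co=1$ so that Theorem~\ref{c:toped} makes $P_{\m_X/\m_X^2}$ (hence $e.d.$) of the central fibre topological, note that $\mu$ is trivial so the admissible perturbations $g_j$ live in $R^H$, and let the deformation kill a minimal generator so that $e.d.(Y,o)<e.d.(X,o)$, realizing the strict inequality in Proposition~\ref{prop:semicon}. All of that matches the paper's argument. But there is a genuine gap at the decisive step: you never identify \emph{why} an admissible $g_j$ can affect $\m/\m^2$ at all. Since every monomial of $g_j$ must have degree $>\deg f_j=\alpha$, the perturbation changes the image of the ideal in $\m/\m^2$ only if $R^H$ possesses \emph{linear (i.e.\ minimal-generating) monomials of degree strictly greater than $\alpha$}; otherwise every admissible $g_j$ lies in $\m^2$, the ranks $l_X$ and $l_Y$ of the images of $\{f_j\}$ and $\{f_j+g_j\}$ in $\m/\m^2$ coincide, and no jump is possible. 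The paper's proof hinges exactly on exhibiting this ``surprising'' phenomenon (Example~\ref{LINBIG}: $\Sf=(1,(3,4,5,6,21),\ub 1)$, $\alpha=420$, with linear monomials $z_1z_4^4z_5$ and $z_1^2z_4^2z_5^2$ of degrees $440$ and $460$), and then taking $g_j$ a combination of those monomials so that $l_Y=3>2=l_X$ and $e.d.$ drops from $7$ to $6$ (Example~\ref{LINBIG2}). Without producing such a graph --- and its existence is the nontrivial combinatorial input --- your plan of ``verify by a \cite{SING} computation that a specific higher-order term forces an extra relation'' has nothing to compute: for a generic $\co=1$ graph the construction yields no deformation of $e.d.$ at all.

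Moreover, your proposed verification locus is wrong. You suggest working in degree $\alpha$ via the model $(A/(I+J(\alpha)))_{s_\alpha}$ and invoke the discriminant mechanism of Example~\ref{NT}. Two objections: first, that discriminant mechanism is the dependence on the full-rank matrix $(a_{ji})$ in the \emph{weighted homogeneous} setting, which is precisely excluded here by requirement (1) (for $\co=1$ the central fibre is topological regardless of the matrix); second, since the $g_j$ have leading degree $>\alpha$, the degree-$\alpha$ component of the image of $f_j+g_j$ in $\m/\m^2$ equals that of $f_j$, so the deformation leaves the degree-$\alpha$ part of $\m_X/\m_X^2$ untouched --- the jump necessarily occurs in degrees above $\alpha$ (at $440$ in the paper's example, where the third equation $f_3\in\m^2$ acquires the nonzero image $cz_1z_4^4z_5+dz_1^2z_4^2z_5^2$ modulo $\m^2$). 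Also note that the graded $\psi_l$-translation to the Stanley--Reisner model is set up for the homogeneous ideal $I_X$ and does not directly apply to the non-homogeneous $I_Y$; the correct computation is the elementary rank comparison $e.d.(X,o)-e.d.(Y,o)=l_Y-l_X$ in the finite-dimensional space spanned by the linear monomials, as the paper does.
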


We will analyze with more details the  case
 $\co=1$, i.e. when $\{f_j\} \subset R^H$ and $\deg (f_j)=\alpha$.
In this case the ideals $I_X, I_Y
\subset R^H$ of $X$ and $Y$ are generated by $\{f_j\}$, $\{f_j+g_j\}$,
respectively.
Let $l_X$ (resp. $l_Y$) be the rank of the image of
$\{f_j\}_j$ (resp. $\{f_j+g_j\}_j$) in $\m/\m^2$.
Then $e.d.(X,o)=\dim (\m/\m^2)-l_X$ (and similarly for $Y$).
Hence,  $e.d.(X,o)-e.d.(Y,o)=l_Y-l_X$.

If $l$ denotes the number of linear monomials of degree
$>\alpha$,
then by letting $g_j$ be general linear combinations of those
monomials, we obtain $l_Y=\min(\nu-2, l_X+l)$, which in special situations can be larger than
$l_X$. Taking special linear combinations, we get for $l_Y$:
$$l_X\leq l_Y\leq \min(\nu-2, l_X+l)$$
and any value between the two combinatorial  bounds can be realized.
The following example shows the existence of a  deformation $Y$
which verifies (\ref{p:nontop}).

\begin{ex}\label{LINBIG2}
 Let $X$ be the weighted homogeneous singularity considered in
(\ref{LINBIG}). Recall that there are 9 linear monomials in $R^H$, $l_X=2$ and the
 embedding dimension is topological, it is 7.
Moreover,  $\alpha=420$, and there are two linear monomials of degree greater than $420$:
$\deg (z_1^2z_4^2z_5^2)=460$, $\deg(z_1z_4^4z_5)=440$. Let us take
 the following  equations for $Y$ (with $c,d \in \C$):
\begin{align*}
 &z_1^3+z_3^5 + z_5^{21}=0\\
 &z_4^6 +z_3^5 + z_5^{21}=0\\
 &z_2^4 +z_3^5 + z_5^{21}+c z_1z_4^4z_5 +d z_1^2z_4^2z_5^2=0.
\end{align*}
Then $l_Y=2$ if $c=d=0$, but  $l_Y=3$ (hence $e.d.(Y,o)=6$) otherwise.
\end{ex}

\end{document}